\newcounter{cond} % or any name you like
\newcommand{\nameditem}[2]{%
  \def\thecond{#1}% how this condition will appear in refs
  \refstepcounter{cond}% step the "cond" counter so label works
  \item[#1]% how it appears in the list: (Regularity)
  \label{#2}% label to reference later
}
\newtheorem{thm}{Theorem}
\newtheorem{lemma}[thm]{Lemma}
\newtheorem{prop}[thm]{Proposition}
\newtheorem{cor}[thm]{Corollary}
\newtheorem{rem}[thm]{Remark}
\newtheorem{ass}{Assumption}
\newcommand{\bfP}{\mathbf{P}}
\newcommand{\bfE}{\mathbf{E}}
\newcommand*{\norm}[1]{\lVert #1 \rVert}
\newcommand{\I}{\mathds{1}}
\newcommand{\ee}{\mathbf{e}}
\newcommand{\vp}{\varphi}
\newcommand{\eps}{\varepsilon}
\newcommand{\giv}{\, \big| \,}
\title{Convergence to the Brownian CRT for critical branching Markov processes}
\author{Emma Horton\thanks{University of Warwick. E-mail: \texttt{emma.horton@warwick.ac.uk}}   
\and
Ellen Powell\thanks{Durham University. E-mail: \texttt{ellen.g.powell@durham.ac.uk}}}
\begin{document}
\maketitle
\begin{abstract}
We prove an invariance principle for a general class of continuous time critical branching processes with spatial motion and finite variance (non-local) branching mechanism. We show that the genealogical trees, viewed as random compact metric measure spaces, converge under {conditioning to be large and}  rescaling to the Brownian continuum random tree in the Gromov–Hausdorff-weak topology, establishing a universal scaling limit for these processes.

\medskip

\noindent {\bf Key words:} Non-local branching processes, criticality, Brownian continuum random tree, metric measure spaces.

\medskip

\noindent {\bf MSC 2020:} 60J80
\end{abstract}

\section{Introduction}

\subsection{Branching Markov Processes}

Let $E$ be a Lusin space. Throughout, we write $L_\infty^+(E)$ for the space of non-negative bounded measurable functions on $E$ with norm $\norm{f}:=\mathrm{sup}_{x\in E}|f(x)|$, and $L^{+}_{\infty,1}(E)$ for the subset of functions in $L_\infty^+(E)$ which are uniformly bounded by one.

We consider a spatial branching process with finitely many particles in which, given their point of creation, particles evolve independently according to a Markov process, $(\xi, \bf P)$. We allow for the possibility that $(\xi, \mathbf P)$ undergoes (soft) killing at an exponential rate or (hard) killing on some absorbing set. As such we append a cemetery state $\{\dagger\}$ to $ E$, which is to be treated as an absorbing state, and regard $(\xi, \mathbf P)$ as conservative on the extended space $E\cup\{\dagger\}$, which is also a Lusin space. 

In an event which we refer to as `branching', particles positioned at $x$ die at rate $\gamma(x) \ge 0$ and instantaneously, new particles are created in $E$ according to a point process. The configurations of these offspring are described by the random counting measure
\begin{equation}
	\mathcal{Z}(A) = \sum_{i = 1}^N \delta_{x_i}( A), 
	\label{Z}
\end{equation}
for Borel $A$ in $E$. The law of the aforementioned point process may depend on $x$, the point of death of the parent, and we denote it by $\mathcal{P}_x$, $x\in E$, with associated expectation operator given by $\mathcal{E}_x$, $x\in E$.  This information is captured by the branching mechanism,
\begin{equation}
	\Phi(x,f) :=  \gamma(x)\mathcal{E}_x\left[\prod_{i = 1}^N f(x_i) - f(x)\right], \qquad x\in E, \, f\in L^+_{\infty,1}(E).
	\label{linearG}
\end{equation}
Without loss of generality we can assume that $\mathcal{P}_x(N =1) = 0$ for all $x\in E$ by viewing a branching event with one offspring as an extra jump in the motion. On the other hand, we do allow for the possibility that $\mathcal{P}_x(N =0)>0$ for some or all $x\in E$. We extend $\Phi$ to $E\cup\{\dagger\}$ by defining it to be zero on $\{\dagger\}$, i.e. particles cannot branch in the cemetery state. We assume throughout that $\gamma$ and $x \mapsto \mathcal E_x[N^2]$ are both in $L_\infty^+(E)$.
\medskip

The branching Markov process can be described via the process $X= (X_t, t\geq0)$ in the space of atomic measures on $E$ with non-negative integer total mass, denoted by $M(E)$, where
\[
X_t (\cdot) = \sum_{i =1}^{N_t}\delta_{x_i(t)}(\cdot), \qquad t\geq0.
\]
Above, $(N_t, t\ge 0)$ denotes the number of particles alive (note, particles in the cemetery state are \emph{not} considered alive) at each time $t\ge 0$ and their positions are given by $\{x_i(t)\, : \, i=1,\dots, N_t\}$. 
In particular, $X$ is Markovian in  $M(E)$. Its probabilities will be denoted $\mathbb{P}: = (\mathbb{P}_{\delta_x}, x\in E)$ where for $x\in E$, $\mathbb{P}_{\delta_x}$ denotes the law of $X$ initiated from $\delta_{x}\in M(E)$. The linear semigroup associated with $X$ is given by 
\begin{equation}
\label{E:Blinearsemigroup}
\psi_t[f](x):=\mathbb{E}_{\delta_x}[\langle f,  X_t \rangle], \qquad t\ge 0, \, x\in E, \, f\in L_\infty^+(E),
\end{equation}
where for $f \in L_\infty^+(E)$ and $\mu \in M(E)$, we set 
\[
  \langle f, \mu\rangle = \int_E f(y) \mu(dy).
\]

Finally, we say that $f$ is in the domain of the extended generator of a Markov process $(Z, P_z)$ if there exists a bounded function $Lf$ such that
\[
  f(Z_t) - f(z) - \int_0^t Lf(Z_s)ds, \quad t \ge 0,
\]
is a martingale.

We now introduce some assumptions on the branching mechanism and the single particle motion that will be used at various points throughout the rest of the article, starting with the finite variance assumption mentioned above.

\begin{ass}\label{A:main}
The most general assumptions we make on the Markov branching process are as follows. When we need only a subset of these assumptions, we will refer to them specifically.
	\begin{enumerate}[label={}]\setlength{\itemsep}{0em}
		%\item[{\bf ()}] The branching rate $\gamma(x)$ and the second moment $\mathcal{E}_x[N^2]$ are uniformly bounded above. \label{A:var}
        \nameditem{\bf (VAR)}{a:VAR} $\gamma$ and $x \mapsto \mathcal E_x[N^2]$ are both in $L_\infty^+(E)$.
        {\nameditem{\bf (CAD)}{a:CAD} The Markov process $(\xi, \mathbf P)$ is càdlàg.}
		\nameditem{\bf (PF)}{a:PF} There exists $\varphi\in L_\infty^+(E)$ and a probability measure $\tilde{\varphi}$ on $E$, normalised so that $\langle \tilde \varphi, \varphi \rangle =1$, such that $\varphi$ is uniformly bounded away from zero on each compactly embedded subset of $E$, and for all $g\in L_\infty^+(E)$ and $t \ge 0$, we have 
		\[
		\langle \psi_t[g], \tilde{\varphi}\rangle = \langle g, \tilde{\varphi}\rangle \text{ and } \psi_t[\varphi]=\varphi.		\]
		Moreover there exists $\varepsilon>0$ such that 
		\[
		\sup_{g\in L_{\infty, 1}^+(E)}\|\varphi^{-1}\psi_t[g]-\langle g, \tilde{\varphi}\rangle\| = O(e^{-\varepsilon t}) \text{ as } t\to \infty.
		\]
		\nameditem{\bf (EXT)}{a:EXT} For all $x\in E$ the MBP becomes extinct $\mathbb{P}_{\delta_x}$-almost surely; that is, if $\zeta:=\inf \{t\, : \, N_t=0\}$, then
		\[
		\mathbb{P}_{\delta_x}(\zeta <\infty) = 1.\]
		\nameditem{\bf (ERG)}{a:ERG} There exist constants $C,M\in (0,\infty)$ such that for all $g\in L_\infty^+(E)$, setting $ \mathcal{V}_M[g](x):= \mathcal{E}_x [ \sum\nolimits_{ i\ne j} g(x_i)g(x_j) \mathbf{1}_{\{N\le M\}}] $,
		\[ \langle \gamma \mathcal{V}_M[g], \tilde{\varphi}\rangle \ge C \langle g, \tilde{\varphi} \rangle^2.\] \vspace{-.5cm}
		\nameditem{\bf (CC)}{a:CC} {The function $\varphi^2$ is in the domain of the extended generator of $(\xi, \mathbf P)$}, where we set $\varphi(\dagger)=0$ so that $\varphi,\varphi^2$ are well defined functions of $\xi$ for all time.  
	\end{enumerate}
\end{ass}

{Let us now comment on the above assumptions. First note that \ref{a:VAR} ensures that branching events do not result in excessively large fluctuations, so that we remain in the Brownian scaling regime. Next, \ref{a:CAD} simply allows us to make use of standard martingale results. The third assumption stipulates that the MBP is critical. Indeed, a more general version of \ref{a:PF} would give the existence of a triple $(\lambda, \varphi, \tilde\varphi)$ with $\lambda \in \mathbb R$ such that for all $t \ge 0$, 
\[
	\langle \psi_t[g], \tilde{\varphi}\rangle = {\rm e}^{\lambda t}\langle g, \tilde{\varphi}\rangle \text{ and } \psi_t[\varphi]={\rm e}^{\lambda t}\varphi,
\]
and
\[
	\sup_{g\in L_{\infty, 1}^+(E)}\|{\rm e}^{-\lambda t}\varphi^{-1}\psi_t[g]-\langle g, \tilde{\varphi}\rangle\| = O(e^{-\varepsilon t}), \quad t \to\infty.
\] 
The case $\lambda > 0$ corresponds to the supercritical case, $\lambda < 0$ the subcritical case, and $\lambda = 0$ the critical case. The (right) eigenfunction $\varphi$ can be thought of as an importance function that assigns a `weight' to each $x \in E$ according to how much mass a process initiated from $x$ contributes (on average) to the total mass of the system. On the other hand, the (left) eigenmeasure $\tilde\varphi$ describes the long-term spatial distribution of particles in the system. In essence, this assumption can be thought of as a functional version of the Perron-Frobenius decomposition for matrices but in the setting of semigroups. Now, in the critical regime, one would usually expect the branching process to become extinct almost surely in finite time however, it seems difficult to prove this assumption in full generality, hence the need for assumption \ref{a:EXT}. Assumption \ref{a:ERG} can be seen as an ergodicity assumption on the offspring distribution: on the event that the number of offspring is at most $M$, when distributed according to the stationary distribution, the offspring are at least as `spread out' as a process that produces two children whose positions are independently distributed. Note that these two assumptions also appear in the literature, see e.g. \cite{Yaglom2022, BMPI}. Finally, assumption {\ref{a:CC}} will allow us to work with the so-called Carré du Champ operator; we refer the reader to Section \ref{subsec:CC} and \cite{cdrg} for further details.

\begin{rem}Our framework is tailored to spatial branching processes where the motion of the particles affects the genealogy and causes it to be critical, as stipulated by assumption \ref{a:PF}. However, it may be the case that (some aspect of) the spatial motion is not relevant to the genealogy, and asking for Assumption \ref{A:main} to hold for the MBP viewed as a process on the whole state space $E$ is too restrictive. For example, imagine the (extreme) situation where we have a critical continuous time Galton Watson process on top of which particles perform some independent spatial motion during their lifetimes. In such a situation, one should project the process onto a smaller state space that captures only the genealogically relevant information, and where assumption \ref{a:PF} holds, in order to apply our results. Thus, while our results are formulated in a spatial setting, they do unify this with non-spatial scenarios. 
\end{rem}

We finish this subsection by discussing some concrete examples that satisfy the above assumptions. First, note that this result extends \cite{ellen} to non-local branching diffusions. For example, consider the case where $E$ is a compact subset of $\mathbb R^d$ with $C^2$ boundary, $\xi$ is a diffusion in $E$ and the offspring distribution satisfies 
\[
 \mathcal E_x[\mathcal Z[f]] = \int_E f(y) K(x, {\rm d}y), \quad x \in E, f \in L_\infty^+(E).
\]
Assume that the resulting branching process is critical and the diffusion $\xi$ is sufficiently regular. Then, in the case that $K(x, {\rm d}y) = (1-\varepsilon)\delta_x + \varepsilon \nu({\rm d}y)$, where $\nu$ has full support, or $K(x, {\rm d}y) = k(x, y){\rm d}y$ with $k(x, y) \ge c > 0$, for example, our assumptions hold.

Another example is that of a branching piecewise deterministic Markov process (PDMP). In this case, particles carry a spatial position, $r \in D \subset \mathbb R^d$, and a velocity $v \in V \subset \mathbb R^d$. Between branching events, particles move according to a PDMP, $\xi$, switching velocity at a bounded rate $\lambda(\cdot)$ and sampling its outgoing velocity according to $q(\cdot, {\rm d}y)$, say. Assume that $E = D \times V$ is bounded, particles are killed on exiting $D$, $\lambda q$ is uniformly bounded above and away from $0$, there is a uniformly positive probability of producing at least two offspring at a branching event and the branching process is critical. Then the above assumptions also hold. We refer the reader to \cite{bNTEbook} for a proof of \ref{a:PF} and \ref{a:EXT}. Proving \ref{a:ERG} is straightforward. For \ref{a:CC} we refer the reader to \cite[Sections 2 and 4]{cdrg}. 
} 

{Finally, we consider the case of a multi-type continuous time Galton Watson process where particles carry type $i \in E \subset \mathbb N$. In this case, checking the above conditions reduces to studying the 
mean reproduction matrix $M = (m_{i,j})_{i, j \in E}$, where $m_{i,j} = \mathcal E_i[\mathcal Z[\mathbf 1_{j}]]$. In the case that $E$ is finite, \ref{a:VAR} holds and $M$ is irreducible, the standard Perron Frobenius theorem for matrices yields the existence of left and right eigenvectors for $M$ associated with the spectral radius of $M$, $\rho(M)$. In the case that $\rho(M) = 1$, \ref{a:PF} and \ref{a:EXT} are satisfied. 
}

\subsection{Main result}

Given a critical MBP $X$ we let $T$ denote the associated genealogical tree. We denote by ${\bf r}$ the root of the tree, $d$ the genealogical distance and $\nu$ the total occupation time of the depth-first exploration of $T$, without backtracking. See Section \ref{subsec:mmspace} for rigorous definitions of these objects. We then define
\[
  \mathcal T_{n, x} := (T, \frac1n d, \frac{1}{n^2}\nu, {\bf r}), \quad n \ge 0, \, x \in E,
\]
to be the associated random pointed metric measure spaces, under the law 
\[
\mathbb P_{\delta_x}( \cdot | N_n > 0)
\]
(where the maximum distance $d(\cdot,\cdot)$ is of order $n$).

We also let $\mathcal T_{\bf e}$ denote the Brownian CRT generated from a Brownian excursion with speed $\sigma^2(f)$ (a constant defined in \eqref{eq:sf}\footnote{ note that the speed of the Brownian excursion does not affect the metric space structure of $\mathcal{T}_{\bf e}$ but it does affect the measure}) conditioned to have height at least $1$ and viewed as a pointed metric measure space with its natural measure and root (see Section \ref{subsec:mmspace}).

\begin{thm}\label{thm:main}
Under Assumption \ref{A:main} %\ref{a:VAR}, \ref{a:CAD}, \ref{a:PF}, \ref{a:EXT}, \ref{a:ERG}, \ref{a:CC}, \ref{a:HK}, 
we have
\begin{equation}\label{eq:main}
\mathcal T_{n, x}  \overset{d.}{\to} \mathcal T_{\bf e}, \quad n \to \infty,
\end{equation}
with respect to the Gromov-Hausdorff-weak topology. 
\end{thm}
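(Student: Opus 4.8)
The natural strategy is a two-step reduction: first replace the genealogical structure of the branching Markov process by a spatially-independent critical Galton--Watson-type object, then invoke a known invariance principle for the latter. The key conceptual point is that, because of the spectral assumption \ref{a:PF}, when we look at the MBP up to a large time $n$ the positions of the particles equilibrate (relative to $\tilde\varphi$) much faster than the time scale $n$ on which the population survives; hence the genealogy, seen at the coarse scale, should not ``feel'' the spatial motion at all, except through the single scalar $\sigma^2(f)$. So the plan is: \textbf{(i)} Use the $\varphi$-transform (many-to-one / spine change of measure associated with the eigenfunction $\varphi$ and eigenmeasure $\tilde\varphi$ from \ref{a:PF}) to rewrite $\mathbb P_{\delta_x}(\,\cdot\mid N_n>0)$ in terms of a spine decomposition, absorbing the soft/hard killing into $\varphi$ via \ref{a:HK}. \textbf{(ii)} Under this tilted law, show that the reduced/genealogical tree is, up to a vanishing error, that of a critical continuous-time Galton--Watson process whose offspring law has finite variance; the variance constant is exactly $\sigma^2(f)$, and assumption \ref{a:ERG} guarantees this variance is strictly positive (nondegeneracy), while \ref{a:VAR} gives finiteness. \textbf{(iii)} Apply the classical convergence of conditioned critical finite-variance Galton--Watson trees (Aldous; in continuous time via Kesten's tree / Abraham--Delmas) to get convergence of $(T, \tfrac1n d, \tfrac1{n^2}\nu,\mathbf r)$ to $\mathcal T_{\mathbf e}$ in the Gromov--Hausdorff-weak topology, conditioning on height $\ge 1$ to match the conditioning $N_n>0$.

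In more detail, for \textbf{(i)} I would work with the additive martingale $W_t = \langle \varphi, X_t\rangle/\varphi(x)$ (well-defined and a martingale by \ref{a:PF}, since $\psi_t[\varphi]=\varphi$), and use the associated change of measure and many-to-one formula to control first and second moments of $\langle f, X_t\rangle$ for test functions $f$; \ref{a:PF} then upgrades this to the statement that $\varphi^{-1}\psi_t[g]\to\langle g,\tilde\varphi\rangle$ exponentially fast, which is what decouples space from genealogy. For \textbf{(ii)}, the crucial computation is the second-moment / variance of the empirical measure, where the Carré du Champ operator (legitimised by \ref{a:CC} and \ref{a:HK}; note $\varphi,\varphi^2$ are in the domain of the extended generator as remarked) produces, after integration against $\tilde\varphi$, the constant $\sigma^2(f)$ appearing in \eqref{eq:sf}. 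Assumption \ref{a:ERG} is precisely the lower bound that prevents $\sigma^2(f)$ from degenerating to $0$ (which would make the limit tree trivial). Assumption \ref{a:EXT} guarantees that the conditioning $\{N_n>0\}$ is a genuine, vanishing-probability conditioning, so that the Yaglom-type normalisation $\mathbb P_{\delta_x}(N_n>0)\asymp c/n$ holds and the rescaling $d/n$, $\nu/n^2$ is the correct one.

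For \textbf{(iii)}, once the genealogical tree is shown to be close (in a coupling / total-variation or finite-dimensional sense after rescaling) to that of a critical Galton--Watson process with offspring variance $\sigma^2(f)\in(0,\infty)$, conditioned on survival to time $n$, I would cite the known scaling limit: the rescaled contour/height function converges to a Brownian excursion of the appropriate speed, conditioned on height $\ge 1$, and the trees converge in Gromov--Hausdorff-weak topology with their mass measures (the occupation measure $\nu/n^2$ converging to the natural measure on $\mathcal T_{\mathbf e}$). Tightness in the Gromov--Hausdorff-weak topology would come from uniform moment control on the diameter (first-moment estimates on $N_t$ via \ref{a:PF}) and on the mass measure.

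\textbf{Main obstacle.} The hard part will be step \textbf{(ii)}: rigorously controlling the genealogy --- not just one-dimensional marginals like $N_t$ but the full tree structure, including the lengths of subtrees hanging off the spine and the joint law of branch points --- and showing that the spatial dependence of the offspring point processes $\mathcal P_x$ washes out uniformly enough that the rescaled tree is within $o(1)$ (in the relevant metric) of a spatially-homogeneous critical Galton--Watson tree. This requires propagating the exponential spectral-gap estimate from \ref{a:PF} through a second-moment analysis of the genealogy and carefully handling the bounded but genuinely non-local, $x$-dependent branching mechanism; matching the resulting variance to exactly $\sigma^2(f)$ via the Carré du Champ identity, and checking the nondegeneracy via \ref{a:ERG}, is where the real work lies.
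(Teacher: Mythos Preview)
Your step (ii) --- reducing the spatial MBP genealogy to that of a homogeneous critical Galton--Watson tree and then citing Aldous --- is not a viable route, and you correctly sense this when you call it the main obstacle. There is no coupling or total-variation comparison between the \emph{full tree} of a spatially inhomogeneous MBP and a GW tree: branching rates and offspring laws depend on position, so subtree lengths off the spine are genuinely correlated with the spine's trajectory, and the spectral gap in \ref{a:PF} controls one-dimensional marginals $\psi_t[g]$, not joint laws of subtree shapes. ``Close in a coupling/total-variation sense after rescaling'' is a hope, not a lemma, and no such statement is available. Relatedly, your interpretation of $\sigma^2(f)$ as an effective offspring variance is off: it is $\langle f,\tilde\varphi\rangle/\langle 1,\tilde\varphi\rangle$ with $f=\Gamma\varphi + \gamma\,\mathcal E_\cdot[(\langle\varphi,\mathcal Z\rangle-\varphi)^2]$, i.e.\ a mixture of the motion's Carr\'e du Champ and the branching variance, and there is no underlying homogeneous offspring law.

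The paper bypasses any GW comparison. It works directly with the depth-first exploration and identifies a c\`adl\`ag local martingale $M_t=\varphi(\zeta_t)+\sum_{w\in D_t}\varphi(X_w(b_w))-(I_t-1)\varphi(x)$, a $\varphi$-weighted generalisation of the Lukasiewicz path, with predictable quadratic variation $\int_0^t f(\zeta_s)\,ds$ (this is where \ref{a:CC} and \ref{a:HK} enter). A Rebolledo-type functional CLT, whose key input is an ergodic argument showing $t^{-1}\int_0^t f(\zeta_s)\,ds\to\sigma^2(f)$ in probability, gives $n^{-1}M_{n^2\cdot}\Rightarrow$ Brownian motion at speed $\sigma^2(f)$, and hence convergence of the single-tree excursion $\hat M$ conditioned on $\{N_n>0\}$ to a Brownian excursion conditioned on height $\ge 1$. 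The second ingredient is to connect $\hat M$ to the height function $h$: the spine decomposition (your step (i), but used for a different purpose) together with the ergodic Lemma \ref{lem:ergodic} yields a law of large numbers $\hat M(u)/h(u)\to \tfrac12\langle\mathcal V[\varphi],\tilde\varphi\rangle/\varphi(x)$ along the ancestry of a uniformly chosen particle, and an ``$\eta_R$-bad particle'' argument propagates this uniformly to all ancestors above a fixed level. This gives convergence of the $k\times k$ distance matrix between uniform points, hence Gromov-weak convergence; a global lower-mass bound (Lemma \ref{lem:GW-to-GHW}) then upgrades to Gromov--Hausdorff-weak.
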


As discussed below, scaling limits of genealogies of critical branching process have been investigated in various settings over the past several decades. Our result proves convergence to the CRT for critical branching process with spatial motion and non-local branching in a very general setting. Moreover, our proof employs a new, largely non-technical, approach. {This approach proceeds through a depth-first exploration, introduced in Section \ref{subsec:labels}, and hence uses an auxiliary measurable ordering of the offspring to label particles and encode the genealogy. The conclusion of the above result, however, remains independent of this ordering.}

{An extension of Theorem \ref{thm:main} to discrete time branching Markov processes should also hold under analogous assumptions. In this setting, each particle at generation $n \ge 0$ carries a type $x \in E$ and produces offspring according to the point measure $(\mathcal Z, \mathcal P_x)$. The resulting genealogical tree is equipped with the graph distance, and the occupation measure is replaced by the counting measure on vertices. The depth-first exploration indexed by tree length would be replaced by the lexicographic exploration of vertices, and the martingale introduced in section \ref{sec:martingale} and its predictable quadratic variation admit natural discrete time analogues. Under discrete time analogues of the above assumptions and preliminary results introduced in sections \ref{subsec:spine} and \ref{subsec:results} (we refer the reader to \cite{discretemoments} and in particular, its supplementary material), we expect the proof strategy to closely follow that of the continuous time case. Such a result would complement the results obtained in \cite{felix}.
}

\subsection{Literature}
Genealogical scaling limits for finite variance critical branching processes were first considered by Aldous \cite{CRT1, CRT3}, who showed that if you condition a critical Galton-Watson process to have a large total progeny then, under the assumption that the offspring distribution has finite variance, the rescaled genealogical tree converges (in the sense of its encoding contour function) to the Brownian CRT. This result has since been extended to various other settings including Galton-Watson trees with infinite variance offspring distribution \cite{LeGall02}, multitype Galton-Watson processes with a finite number of types \cite{miermont08}, and branching diffusions in bounded domains \cite{ellen}, to name just a few.

{More recently, Foutel-Rodier obtained a general and technically powerful result for discrete-time branching Markov processes, assuming finite positive moments of all orders for the offspring distribution \cite{felix}. Our work can be viewed as complementary to this framework, as well as an extension in two directions: we show that only second-moment assumptions on the offspring distribution are sufficient, and we treat the continuous-time setting. The proof also follows a different philosophy, relying on new analytic and probabilistic ideas -  most notably a robust martingale argument - rather than the combinatorial techniques and moment computations that underpin earlier approaches.}

Finally, let us mention that since we view our genealogical tree 
as a random metric measure space (mm-space),
we will rely on several modern results concerning  
 topologies on mm-spaces  
and convergence of random mm-spaces, 
\cite{mm-space, mmm-space, felixmmspace}. We refer the reader to section \ref{subsec:mmspace} for a summary and brief discussion of the results that will be needed for the proof of Theorem \ref{thm:main}.

\paragraph{Outline of the paper.} The rest of this paper is set out as follows. In the next section we present various preliminary results that will be used to prove the main theorem. In particular, section \ref{subsec:spine} discusses various many-to-one representations of MBPs and associated ergodicity results, section \ref{subsec:results} houses existing results in the literature that will be used to prove Theorem \ref{thm:main}, in section \ref{subsec:CC} we give a brief introduction to the Carré du Champ operator which will be necessary to define various martingales for our proofs, section \ref{subsec:labels} formalises notation required to make sense of the genealogies of the MBP and section \ref{subsec:mmspace} discusses relevant theory of metric measure spaces. 
Section \ref{sec:proof} is then concerned with the proof of Theorem \ref{thm:main}. {We start, in Section \ref{sec:martingale}, by identifying a key martingale associated with the depth-first exploration of a critical MBP genealogical tree, generalising the Lukasiewicz path associated with a Galton-Watson tree, and serving as a proxy for the height (or contour) function of the tree. In section \ref{sec:fclt} we establish a functional central limit theorem for this martingale, which provides the crucial connection to the Brownian CRT. Finally in section \ref{sec:mainproof}, we relate the martingale precisely to the height function of a critical MBP genealogical tree conditioned to be large, which allows us to complete the proof.}

\section{Preliminaries and existing results}

\subsection{Spine decomposition}\label{subsec:spine}

In this section, we present some existing results pertaining to various single particle motions associated with a MBP. We refer the reader to \cite{Yaglom2022} for the proofs of these results. 

For $f\in L_\infty^+(E)$ and $x\in E$, define
\begin{equation}
m[f](x):=\mathcal{E}_x[\langle f, \mathcal{Z} \rangle] = \mathcal{E}_x\bigg[\sum_{i=1}^N f(x_i)\bigg].
\label{E:m}
\end{equation}
We will also write $m(x):=m[\mathbf{1}](x)=\mathcal{E}_x[N]$ for the mean of the offspring distribution, which is uniformly bounded from above thanks to \ref{a:VAR}.

Another way to describe the semigroup $\psi_t$ is via the so-called many-to-one formula. In this setting, due to the possibility of non-local branching, we need to introduce another Markov process $Y=(Y_t, \, t\ge 0)$, which evolves according to the dynamics of $(\xi, \mathbf P)$ except that at instantaneous rate $\gamma(x)m(x)$, it jumps from its current position $x$ to a new random position, lying in $A\subset E$ with probability $m^{-1}(x) m[\mathbf 1_A](x)$. Let $\tilde{\bfP}=(\tilde{\bfP}_x, \, x\in E)$ denote its law and $\tilde{\bfE}=(\tilde{\bfE}_x, \, x\in E)$ its associated expectation. Then by conditioning on the first jump of the process $Y$, it is straightforward to deduce the following many-to-one formula. 

\begin{lemma}[Many-to-one formula] 
	\label{L:mt1}
 	Under \ref{a:VAR} (in fact, \ref{a:VAR} can be relaxed to a first moment assumption), for each $x \in E, t\ge 0$ and $f\in L_\infty^+(E)$, we have
	\[ 
	\psi_t[f](x)=\tilde{\bfE}_x[e^{\int_0^t \gamma(Y_s)(m(Y_s)-1) \mathrm{d}s} f(Y_t)\mathbf 1_{\{t < \mathtt k\}}],
	\]
	where $\mathtt k := \inf\{t \ge 0 : Y_t \notin E\}$.
\end{lemma}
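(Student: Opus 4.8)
The plan is to show that both sides of the claimed identity solve one and the same linear renewal (Volterra) equation, obtained by conditioning on a ``first event'', and then to conclude by a uniqueness argument. For $f \in L_\infty^+(E)$ write $A_t[f](x) := \mathbf{E}_x[\mathrm{e}^{-\int_0^t \gamma(\xi_s)\,\mathrm{d}s}f(\xi_t)]$ (with the usual convention $f(\dagger) = \gamma(\dagger) = 0$, so that the hard-killing indicator $\mathbf{1}_{\{t<\mathtt{k}\}}$ is absorbed into $f$), and for $v \in L_\infty^+(E)$ set $B_s[v](x) := \mathbf{E}_x[\gamma(\xi_s)\mathrm{e}^{-\int_0^s \gamma(\xi_r)\,\mathrm{d}r}m[v](\xi_s)]$. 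Note $\|B_s[v]\| \le \|\gamma\|\,\|m\|\,\|v\|$ since $m[v] \le \|v\|\,m \le \|v\|\,\|m\|$; this bound is the only place a moment hypothesis is needed, and a first moment on $N$ suffices.

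First I would derive the renewal equation for $\psi$. Run a single initial particle under $(\xi,\mathbf{P})$ and let $T_1$ be its first branching time; conditionally on the path of $\xi$, $T_1$ is the first point of a Poisson process of intensity $\gamma(\xi_s)\,\mathrm{d}s$, so $\mathbf{P}_x(T_1 > t \mid \xi) = \mathrm{e}^{-\int_0^t \gamma(\xi_s)\,\mathrm{d}s}$. On $\{T_1 > t\}$ only the initial particle survives, at $\xi_t$; on $\{T_1 \in \mathrm{d}s\}$ with $s \le t$, the parent at $\xi_s$ is replaced by offspring with law $\mathcal{P}_{\xi_s}$, and by the branching property together with the tower rule the conditional mean of $\langle f, X_t\rangle$ given the first branch and the offspring positions is $\sum_i \psi_{t-s}[f](x_i)$, which averages (using \eqref{E:m}) to $m[\psi_{t-s}[f]](\xi_s)$. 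Since $f \ge 0$, Tonelli licenses all the interchanges, and one obtains, as an identity in $[0,\infty]$,
\[
\psi_t[f](x) = A_t[f](x) + \int_0^t B_s[\psi_{t-s}[f]](x)\,\mathrm{d}s.
\]

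Next I would show the right-hand side of the lemma, call it $u_t[f](x)$, satisfies the same equation, this time by conditioning on $J_1$, the first of the ``extra'' jumps of $Y$ (those occurring at rate $\gamma m$). Before $J_1$ the process $Y$ is just $(\xi,\mathbf{P})$, and given the path of $\xi$ the time $J_1$ is the first point of a Poisson process of intensity $\gamma(\xi_s)m(\xi_s)\,\mathrm{d}s$. The key bookkeeping is that on $\{J_1 > t\}$ the survival factor $\mathrm{e}^{-\int_0^t \gamma m}$ combines with the weight $\mathrm{e}^{\int_0^t \gamma(m-1)}$ to give exactly $\mathrm{e}^{-\int_0^t \gamma}$, reproducing $A_t[f]$; while on $\{J_1 \in \mathrm{d}s\}$ the jump rate $m(\xi_s)$ cancels the normalising factor $m(\xi_s)^{-1}$ in the jump kernel $m(\xi_s)^{-1}m[\mathbf{1}_{\mathrm{d}z}](\xi_s)$, so that $\int_E m[\mathbf{1}_{\mathrm{d}z}](\xi_s)u_{t-s}[f](z) = m[u_{t-s}[f]](\xi_s)$, and the exponential factors again collapse to $\mathrm{e}^{-\int_0^s \gamma}$; hence this contribution is $\int_0^t B_s[u_{t-s}[f]](x)\,\mathrm{d}s$. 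Thus $u_\bullet[f]$ solves the very same renewal equation as $\psi_\bullet[f]$.

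Finally I would close the argument by uniqueness. Both $\psi_t[f]$ and $u_t[f]$ are finite and locally bounded in $t$: for $u$ immediately, via $\mathrm{e}^{\int_0^t \gamma(m-1)} \le \mathrm{e}^{\|\gamma\|\|m\|t}$; for $\psi$ by iterating the renewal equation (the $n$-th Picard term is bounded by $\|f\|(\|\gamma\|\|m\|t)^n/n!$, so the Neumann series converges), or directly from \ref{a:PF}. Subtracting the two renewal equations, $w_t := \|\psi_t[f] - u_t[f]\|$ obeys $w_t \le \|\gamma\|\|m\|\int_0^t w_s\,\mathrm{d}s$ with $w$ locally bounded, so Grönwall's inequality forces $w \equiv 0$, proving the formula. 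I expect the only genuinely delicate points to be (i) the a priori finiteness of $\psi_t[f]$ that one needs before subtracting — best handled by the Picard expansion rather than by quoting \ref{a:PF} — and (ii) the careful first-jump decomposition of the piecewise-built process $Y$, i.e.\ cleanly separating its branching-type jumps from those inherited from $\xi$; both are routine given that $\gamma$ and $m$ are bounded, the jump kernel of $Y$ having been designed precisely so that the $m$-factors cancel.
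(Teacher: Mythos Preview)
Your proposal is correct and is precisely the standard argument the paper has in mind: the paper does not write out a proof but simply remarks that the formula follows ``by conditioning on the first jump of the process $Y$'', and your renewal-equation derivation (first-branch decomposition for $\psi$, first-jump decomposition for $u$, then Gr\"onwall uniqueness) is exactly the natural way to flesh this out. The only minor comment is that your caution about a priori finiteness of $\psi_t[f]$ via the Picard series is the right instinct, since invoking \ref{a:PF} would be circular in spirit (the lemma is stated under the weaker hypothesis \ref{a:VAR} alone).
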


\bigskip

Let us now introduce a useful result known as the spine decomposition. We assume that assumptions \ref{a:CAD} and \ref{a:PF} hold and, for an initial configuration $\mu\in M(E)$, define
\begin{equation}
	\label{E:W}
	W_t:=\frac{\langle \varphi, X_t \rangle}{\langle \varphi, \mu \rangle}, \qquad x\in E, \, t\ge 0.
\end{equation}

Then, due to \ref{a:PF} and the branching Markov property, $(W_t, \, t\ge 0)$ is a mean-one martingale under $\mathbb{P}_\mu$. This means we can define a new measure $\mathbb{P}_\mu^\varphi$ on the Markov branching process via
\begin{equation}
	\label{E:COM}
	\frac{\mathrm{d}\mathbb{P}_\mu^\varphi}{\mathrm{d} \mathbb{P}_\mu} \Big|_{\mathcal{F}_t} = W_t, \qquad t\ge 0.
\end{equation}
It is well-known that under the new measure, we have a so-called \textit{spine decomposition} for the branching process. We refer the reader to \cite[Chapter 11]{bNTEbook} for the proof and further details.

\begin{prop}[Spine decomposition]\label{prop:spine}
Under  assumptions \ref{a:CAD}, \ref{a:PF} and \ref{a:VAR}	(in fact, \ref{a:VAR} can be relaxed to a finite first moment assumption), the branching process $X$ under $\mathbb{P}^\varphi$ can be constructed as follows. 
	\begin{enumerate}
		\item From the initial configuration $\mu = \sum\nolimits_{j = 1}^n \delta_{x_i}\in {M}(E)$ with an arbitrary enumeration of particles, the $i^*$-th individual is selected and marked \textit{spine}, so that, given $\mu$, 
		the probability that $i^*=j$ is given by $\varphi(x_j)/\langle\varphi,  \mu\rangle$ for $j=1,\dots, n$.
		\item The individuals $j\ne i^*$ in the initial configuration each issue independent copies of $(X,\mathbb{P}_{\delta_{x_j}})$ respectively. 
		\item The marked individual, ``spine'', issues a single particle whose motion has the law of the many-to-one motion $(Y,\tilde{\bfP})$ weighted by 
        \[ e^{\int_0^t \gamma(Y_s)(\frac{m[\varphi](Y_s)}{\varphi(Y_s)}-1) \mathrm{d}s}\frac{\varphi(Y_t)}{\varphi(x)} .\]
		\item The spine undergoes branching at the accelerated rate 
		\[ \rho(x):=\gamma(x) \frac{m[\varphi](x)}{\varphi(x)} \]
		when at $x\in E$, at which point, it produces a random number of particles according to the random measure $\mathcal{Z}$ on $E$ with law $\mathcal{P}_x^\varphi$, where 
		\[
		\frac{\mathrm{d}\mathcal{P}_x^\varphi}{\mathrm{d} \mathcal{P}_x} = \frac{\langle \varphi, \mathcal{Z} \rangle}{m[\varphi](x)}.
		\]
		\item Given $\mathcal{Z}$ from the previous step, $\mu$ is redefined as $\mu=\mathcal{Z}$ and Step 1 is repeated.
	\end{enumerate}
\begin{rem}\label{rem:L}
In the above description, it is implicit that for $Y$ be the many-to-one motion (with law $\tilde{\bfP}$); the process 
         \[ e^{\int_0^t \gamma(Y_s)(\frac{m[\varphi](Y_s)}{\varphi(Y_s)}-1) \mathrm{d}s}{\varphi(Y_t)}\]
         is a martingale (see \cite[Chapter 11]{bNTEbook} for a proof). This implies that
         $
         \varphi(Y_t)-\int_0^t \gamma(Y_s)(m[\varphi](Y_s)-\varphi(Y_s))
         $
         is a $(Y,\tilde{\bfP})$ martingale, and in turn that
        $
         \varphi(\xi_t)-\int_0^t \gamma(\xi_s)(m(\xi_s)-1)\varphi(\xi_s)
         $
         is a $(\xi,\bfP)$ martingale. In particular, $\varphi$ is in the domain of the extended generator for the Markov process $(\xi,\bfP)$.
\end{rem}

The above implies that the motion of the particle marked spine (including the jumps that it experiences at branching events) has law $\tilde{\bfP}^\varphi$ where 
\[ 
\frac{\mathrm{d}\tilde{\bfP}^\varphi_x}{\mathrm{d}\tilde{\bfP}_x} = e^{\int_0^t \gamma(Y_s)(m(Y_s)-1) \mathrm{d}s}\frac{\varphi(Y_t)}{\varphi(x)}\mathbf{1}_{\{t<\mathrm{k}\}}, \qquad t\ge 0,\, x\in E,
\]
where $\mathrm{k}:=\inf\{t>0 \, : \, Y_t\notin E\}$. From this we see that the process $Y$ under $\tilde{\bfP}^{\varphi}$ is conservative, and satisfies 
\[ \tilde{\bfE}_x^{\varphi}[f(Y_t)]=\frac{1}{\varphi(x)} \psi_t[\varphi f], \qquad t\ge 0,\, f\in L_\infty^+(E),\]
with stationary distribution 
\[ \varphi(x)\tilde{\varphi}(\mathrm{d}x) , \qquad x\in E.\]
\end{prop}

\bigskip

\bigskip

Now we state a result regarding the ergodicity of the spine. { The proof follows, for example, by \cite[Theorem 5.1]{Yaglom2022}: computing first and second moments and using the exponential rate of convergence to stationarity for the spine motion, as assumed in \ref{a:PF}}. 

{\begin{lemma}\label{lem:ergodic}
Suppose that  $F:E\to [0,\infty)$ satisfies $\sup_{x \in E}|\varphi(x)F(x)| < \infty$. Then 
\[
  \lim_{t \to \infty}\sup_{x \in E} \left| \tilde{\mathbf E}_x^\varphi \left[\int_0^1 F(Y_{ut})du\right] - \langle \varphi \tilde\varphi, F\rangle \right| = 0
\]
and 
\[
\frac1t \int_0^t F(Y_u) du \to \langle \varphi\tilde\varphi,F \rangle 
\quad \tilde{\mathbf{P}}_x^\varphi\text{- almost surely as } t\to \infty.\]
\end{lemma}}

\subsection{Limit theorems for critical MBPs}\label{subsec:results}
We now present some further existing results for our class of MBPs that will be used throughout the proof of Theorem \ref{thm:main}. Theorem \ref{thm:survival} concerns the classical Kolmogorov survival probability for critical branching processes and Theorem \ref{thm:Yaglom} pertains to the so-called Yaglom limit; the proofs of these results can be found in \cite{BMPI}. Corollary \ref{cor:joint_Yaglom} and Proposition \ref{prop:Q-process}
are consequences of the aforementioned Yaglom limit. Finally, Theorem \ref{thm:moments} characterises the asymptotic behaviour of the moments of $X$ and its occupation, and we refer the reader to \cite{bmoments, moment_correction} for its proof.

\begin{thm}[Kolmogorov survival probability]\label{thm:survival}
Suppose \ref{a:PF},\ref{a:EXT} and \ref{a:ERG} hold. Then, for all $\mu\in M(E)$,
\begin{equation*}
 \lim_{t \to\infty} t\mathbb{P}_{\mu} (N_t > 0) =  \frac{2\langle \varphi, \mu\rangle }{\langle {\gamma}\mathcal{V}[\varphi],\tilde\varphi\rangle},
 \label{eq:Kolmogorov}
 \end{equation*}
 where $\mathcal V[g](x) := \lim_{M \to \infty}\mathcal V_M[g](x) = \mathcal E_x[\mathcal Z[g]^2] - \mathcal E_x[\mathcal Z[g]]^2$.
\end{thm}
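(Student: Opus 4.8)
The plan is to study the survival probability $u_t(x):=\mathbb P_{\delta_x}(N_t>0)$ through the nonlinear (mild) equation it satisfies and to read off its asymptotics from a Riccati-type ODE for its spatial average against $\tilde\varphi$; all three of \ref{a:PF}, \ref{a:EXT}, \ref{a:ERG} (together with \ref{a:VAR}) enter. First I would reduce to the case $\mu=\delta_x$: for a general $\mu=\sum_{j=1}^n\delta_{x_j}$ the branching Markov property gives $\mathbb P_\mu(N_t=0)=\prod_{j=1}^n(1-u_t(x_j))$, so once one knows $u_t(x_j)\to 0$ and $t\,u_t(x_j)\to c\,\varphi(x_j)$ for the constant $c$ on the right-hand side of the stated limit, expanding the product yields $t\,\mathbb P_\mu(N_t>0)\to c\sum_{j=1}^n\varphi(x_j)=c\,\langle\varphi,\mu\rangle$.

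For $\mu=\delta_x$ the starting point is the evolution equation satisfied by $u_t$, obtained from a first-step (branching property) analysis:
\[
u_t=\psi_t[\mathbf 1]-\int_0^t\psi_{t-s}\big[\gamma\,\widehat R(\cdot,u_s)\big]\,ds,\qquad \widehat R(x,g):=\mathcal E_x\Big[\prod_{i=1}^N(1-g(x_i))-1+\langle g,\mathcal Z\rangle\Big],
\]
where the Bonferroni inequalities give $0\le \widehat R(x,g)\le \tfrac12\mathcal V[g](x)\le \tfrac12\|g\|^2\,\|\mathcal E_\cdot[N^2]\|$, finite by \ref{a:VAR}. Pairing this identity with $\tilde\varphi$ and using the invariance $\langle\psi_r[g],\tilde\varphi\rangle=\langle g,\tilde\varphi\rangle$ from \ref{a:PF}, I obtain for $a(t):=\langle u_t,\tilde\varphi\rangle$ the scalar equation $a(t)=1-\int_0^t\langle\gamma\,\widehat R(\cdot,u_s),\tilde\varphi\rangle\,ds$, so that $\dot a(t)=-\langle\gamma\,\widehat R(\cdot,u_t),\tilde\varphi\rangle\le 0$ and, by \ref{a:EXT} and monotone convergence, $a(t)\downarrow 0$.

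The core of the argument is a chain of a priori estimates locking $u_t$ onto $a(t)\varphi$. From the Markov property at time $t_0$ one has $u_t\le \psi_{t_0}[u_{t-t_0}]$; combined with the exponential ergodicity in \ref{a:PF} (first letting $t\to\infty$, then $t_0\to\infty$) this gives $\|u_t\|\to 0$. Inserting this into the truncated Bonferroni lower bound $\widehat R(x,g)\ge \tfrac12\big(1-\tfrac13 M\|g\|\big)\mathcal V_M[g](x)$ and invoking \ref{a:ERG} yields $\langle\gamma\,\widehat R(\cdot,u_t),\tilde\varphi\rangle\ge \tfrac C4\,a(t)^2$ for large $t$, hence $\frac{\mathrm d}{\mathrm dt}(1/a)\ge C/4$ and $a(t)=O(1/t)$; one further application of \ref{a:PF} upgrades this to $\|u_t\|=O(1/t)$. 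Feeding $\|u_t\|=O(1/t)$ back into the mild equation and splitting the time integral at $t-T$ — the ergodic estimate with error $O(\mathrm e^{-\varepsilon T})$ on $[0,t-T]$, crude first-moment bounds on $[t-T,t]$, and now $\|\widehat R(\cdot,u_s)\|=O(1/s^2)$ making the tails small — produces the alignment $\|u_t-a(t)\varphi\|_\infty=o(1/t)$.

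With the alignment in hand the conclusion is an ODE computation: substituting $u_t=a(t)\varphi+o(1/t)$ into $\widehat R(x,u_t)=\tfrac12\mathcal E_x\big[\sum_{i\ne j}u_t(x_i)u_t(x_j)\big]+O(\|u_t\|^3)$ and using \ref{a:VAR} to control the remainders gives $\langle\gamma\,\widehat R(\cdot,u_t),\tilde\varphi\rangle=\tfrac12 a(t)^2\,\langle\gamma\,\mathcal V[\varphi],\tilde\varphi\rangle\,(1+o(1))$, where $\langle\gamma\,\mathcal V[\varphi],\tilde\varphi\rangle$ is finite by \ref{a:VAR} and strictly positive by \ref{a:ERG}. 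Thus $\dot a(t)=-\tfrac12\langle\gamma\,\mathcal V[\varphi],\tilde\varphi\rangle\,a(t)^2(1+o(1))$, so $w:=1/a$ satisfies $\dot w=\tfrac12\langle\gamma\,\mathcal V[\varphi],\tilde\varphi\rangle\,(1+o(1))$, whence $t\,a(t)\to 2/\langle\gamma\,\mathcal V[\varphi],\tilde\varphi\rangle$; with the alignment this gives $t\,u_t(x)\to 2\varphi(x)/\langle\gamma\,\mathcal V[\varphi],\tilde\varphi\rangle$, and the reduction of the first paragraph completes the proof. I expect the third paragraph to be the real obstacle: establishing, without circularity, both the exact $1/t$ rate for $a(t)$ and the alignment $u_t\approx a(t)\varphi$ — this is exactly where \ref{a:PF} (ergodicity), \ref{a:ERG} (which forbids decay faster than $1/t$) and \ref{a:VAR}/\ref{a:EXT} are all genuinely needed, and the interplay between the scalar $a(t)$ and the function $u_t$ must be sequenced with care. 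An alternative route, via the spine decomposition of Proposition \ref{prop:spine} and the identity $t\,\mathbb P_\mu(N_t>0)=\langle\varphi,\mu\rangle\,\mathbb E^{\varphi}_\mu[\,t/\langle\varphi,X_t\rangle\,]$, would instead reduce everything to the linear-in-$t$ growth of $\langle\varphi,X_t\rangle$ under the size-biased law.
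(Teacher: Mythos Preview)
The paper does not supply its own proof of Theorem~\ref{thm:survival}: it is listed in Section~\ref{subsec:results} among ``existing results'' and the proof is explicitly deferred to the reference \cite{BMPI}. So there is nothing in the paper to compare your argument against.

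That said, your sketch is a sensible and essentially standard route to such Kolmogorov-type estimates, and is very much in the spirit of the arguments one finds in the cited literature: derive the semilinear mild equation for $u_t(x)=\mathbb P_{\delta_x}(N_t>0)$, pair against the left eigenmeasure $\tilde\varphi$ to obtain a scalar Riccati-type equation for $a(t)=\langle u_t,\tilde\varphi\rangle$, use \ref{a:ERG} for a lower quadratic bound to force $a(t)=O(1/t)$, and use the spectral gap in \ref{a:PF} to show the alignment $u_t\sim a(t)\varphi$. You correctly identify the alignment step as the place where the real work lies, and the bootstrap you outline (crude $O(1/t)$ for $\|u_t\|$, then feed back into the Duhamel integral split at $t-T$) is the right mechanism. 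The alternative spine route you mention at the end is also viable and is closer to how some of the companion results (e.g.\ the Yaglom limit) are handled.

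One small point to watch: your final constant is $\langle\gamma\,\mathcal V[\varphi],\tilde\varphi\rangle$, whereas the theorem as stated has $\langle\mathcal V[\varphi],\tilde\varphi\rangle$; compare also the appearance of $\gamma$ in \ref{a:ERG}. This looks like a notational convention (whether $\gamma$ is absorbed into $\mathcal V$ or not) rather than a mathematical discrepancy, but make sure your final answer matches whichever convention is in force.
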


\medskip

\begin{thm}[Yaglom limit]\label{thm:Yaglom}
Suppose \ref{a:PF}, \ref{a:EXT} and \ref{a:ERG} hold.
Then, for all $\mu\in M(E)$,
\begin{equation*}
\lim_{t \to \infty}\mathbb E_{\mu}\left[ {\rm exp}\left(-\theta\frac{\langle f, X_t\rangle}{t} \right) \Big|\, N_t >0 \right] = \frac{1}{1 +  \tfrac12\theta \langle  f, \tilde\varphi\rangle \langle {\gamma} \mathcal V[\varphi], \tilde\varphi \rangle},
\label{eq:Yaglom}
\end{equation*}
where $\theta\geq 0$ and $f\in L_\infty^+(E)$.
\end{thm}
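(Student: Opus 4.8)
\medskip
\noindent\emph{Proof sketch.} The plan is to study the nonlinear (Laplace) semigroup of $X$ and to reduce the statement to a scalar Riccati-type ODE, in the spirit of Kolmogorov's classical proof for Galton--Watson trees, but arranged so that only the second moment in \ref{a:VAR} is used.

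For $g\in L_\infty^+(E)$ put $v_t[g](x):=\mathbb E_{\delta_x}[1-\mathrm e^{-\langle g,X_t\rangle}]$. Conditioning on the first branching event together with a standard Duhamel argument gives the mild equation
\[
v_t[g]=\psi_t[1-\mathrm e^{-g}]-\int_0^t\psi_{t-s}\big[\Gamma(v_s[g])\big]\,\mathrm ds ,
\]
where $\Gamma(v)(x):=\gamma(x)\,\mathcal E_x\big[\sum_{i=1}^N v(x_i)-1+\prod_{i=1}^N(1-v(x_i))\big]$ satisfies $0\le\Gamma(v)(x)\le\tfrac12\gamma(x)\,\mathcal E_x[\langle v,\mathcal Z\rangle^2]$, a bounded quantity by \ref{a:VAR}. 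By the branching property $\mathbb E_\mu[\mathrm e^{-\frac\theta t\langle f,X_t\rangle}]=\prod_j(1-v_t[\tfrac\theta t f](x_j))$ for $\mu=\sum_j\delta_{x_j}$, and since $v_t[g](x)\le\mathbb P_{\delta_x}(N_t>0)\to 0$ by \ref{a:EXT}, one obtains
\[
\mathbb E_\mu\Big[\mathrm e^{-\frac\theta t\langle f,X_t\rangle}\,\Big|\,N_t>0\Big]=1-(1+o(1))\,\frac{\langle v_t[\tfrac\theta t f],\mu\rangle}{\mathbb P_\mu(N_t>0)} .
\]
By Theorem~\ref{thm:survival}, the claim therefore reduces to the one-particle asymptotics $t\,v_t[\tfrac\theta t f](x)\to c_\theta\,\varphi(x)$ for some constant $c_\theta$ independent of $x$, in which case the limit above equals $1-\tfrac12 c_\theta\langle\mathcal V[\varphi],\tilde\varphi\rangle$.

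To analyse $v_t[\tfrac\theta t f]$, apply $\langle\,\cdot\,,\tilde\varphi\rangle$ to the mild equation and use the invariance $\langle\psi_s[h],\tilde\varphi\rangle=\langle h,\tilde\varphi\rangle$ from \ref{a:PF}: the scalar $\delta_t:=\langle v_t[g],\tilde\varphi\rangle$ obeys the \emph{exact} ODE $\dot\delta_t=-\langle\Gamma(v_t[g]),\tilde\varphi\rangle$ with $\delta_0=\langle 1-\mathrm e^{-g},\tilde\varphi\rangle$. Two ingredients turn this into a Riccati equation. First, a shape-relaxation estimate: once $v_t[g]$ is uniformly small in $x$ (which itself follows from \ref{a:EXT} and \ref{a:PF}), the uniform exponential ergodicity in \ref{a:PF} pushes $v_t[g]$ onto the direction of $\varphi$, giving $\sup_x|\varphi(x)^{-1}v_t[g](x)-\delta_t|=o(\delta_t)$. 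Second, a linearisation along $\varphi$: $\langle\Gamma(\delta\varphi),\tilde\varphi\rangle=\tfrac12\kappa\,\delta^2+o(\delta^2)$ as $\delta\downarrow 0$, where $\kappa:=\langle\gamma\,\mathcal E_\cdot[\sum_{i\ne j}\varphi(x_i)\varphi(x_j)],\tilde\varphi\rangle$; here the error term is supplied by \emph{dominated convergence}, the relevant integrand being squeezed between $0$ and $\tfrac12\langle\varphi,\mathcal Z\rangle^2$, whose $\langle\gamma\,\mathcal E_\cdot[\,\cdot\,],\tilde\varphi\rangle$-integral is finite by \ref{a:VAR} --- this is precisely where a finite \emph{second} moment suffices, with no higher moments required --- while $\kappa\in(0,\infty)$, finiteness coming from \ref{a:VAR} and positivity from \ref{a:ERG}. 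Combining the two gives $\langle\Gamma(v_t[g]),\tilde\varphi\rangle=\tfrac12\kappa\,\delta_t^2(1+o(1))$, hence $\tfrac{\mathrm d}{\mathrm dt}(1/\delta_t)\to\tfrac12\kappa$.

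The conclusion then follows as in the Kolmogorov computation. Taking $g=\tfrac\theta t f$ (which is of order $1/t$, hence small), linearising $1-\mathrm e^{-g}$ and using \ref{a:PF} give $\delta_{s_0}=\tfrac\theta t\langle f,\tilde\varphi\rangle(1+O(1/t))$ for any fixed $s_0$; integrating $\tfrac{\mathrm d}{\mathrm ds}(1/\delta_s)=\tfrac12\kappa(1+o(1))$ from $s_0$ to $t$ yields $1/\delta_t=t/(\theta\langle f,\tilde\varphi\rangle)+\tfrac12\kappa t+o(t)$, so $\delta_t\sim 2\theta\langle f,\tilde\varphi\rangle/\big(t(2+\theta\langle f,\tilde\varphi\rangle\kappa)\big)$, and by shape relaxation $v_t[\tfrac\theta t f](x)\sim\delta_t\,\varphi(x)$. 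Thus $c_\theta=2\theta\langle f,\tilde\varphi\rangle/(2+\theta\langle f,\tilde\varphi\rangle\kappa)$; running the same argument with $g\equiv\infty$ gives $\langle\mathbb P_{\delta_\cdot}(N_t>0),\tilde\varphi\rangle\sim 2/(\kappa t)$, which compared with Theorem~\ref{thm:survival} identifies $\kappa=\langle\mathcal V[\varphi],\tilde\varphi\rangle$. Substituting, the conditional Laplace transform converges to $1-\tfrac12 c_\theta\langle\mathcal V[\varphi],\tilde\varphi\rangle=\big(1+\tfrac12\theta\langle f,\tilde\varphi\rangle\langle\mathcal V[\varphi],\tilde\varphi\rangle\big)^{-1}$, as required. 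The main obstacle is the shape-relaxation estimate, and more precisely making it (and the linearisation error) hold \emph{uniformly} along the diagonal $s\in[s_0,t]$ with the $t$-dependent argument $\tfrac\theta t f$, so that the $o(1)$ appearing in the Riccati equation is genuinely negligible all the way up to time $t$; this is where the full strength of the ergodicity in \ref{a:PF}, and some care with the non-local branching, are needed.
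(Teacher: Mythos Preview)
The paper does not prove this theorem; it is quoted from \cite{BMPI} as an existing result (see the paragraph introducing Section~\ref{subsec:results}). Your sketch is a correct outline of the standard nonlinear--semigroup/Riccati route to the Yaglom limit: set up the mild equation for $v_t[g]=\mathbb E_{\delta_x}[1-\mathrm e^{-\langle g,X_t\rangle}]$, project onto $\tilde\varphi$ to get an exact scalar ODE, use the Perron--Frobenius ergodicity in \ref{a:PF} for shape relaxation onto $\varphi$, Taylor--expand $\Gamma$ to second order (the dominated convergence argument you give is exactly why only \ref{a:VAR} is needed), and integrate the resulting Riccati relation; the constant is then identified via Theorem~\ref{thm:survival}. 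The final algebra you carry out is correct and lands on the stated limit.

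From the paper's discussion after Corollary~\ref{cor:joint_Yaglom}, the proof in \cite{BMPI} is organised slightly differently: it first establishes the Yaglom limit for the special test function $f=\varphi$, and then passes to general $f$ by decomposing $f=\langle f,\tilde\varphi\rangle\varphi+(f-\langle f,\tilde\varphi\rangle\varphi)$ and showing that the second piece satisfies $\langle f-\langle f,\tilde\varphi\rangle\varphi,X_t\rangle/t\to 0$ conditionally on survival. Your approach absorbs this step: since you track only the scalar $\delta_t=\langle v_t[\tfrac\theta t f],\tilde\varphi\rangle$ and the dependence on $f$ enters solely through the initial value $\delta_{s_0}\sim\tfrac\theta t\langle f,\tilde\varphi\rangle$, the reduction to the $\varphi$--direction is implicit in the shape--relaxation estimate rather than stated as a separate lemma. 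The two organisations are equivalent in substance; the hard analytic point in both is exactly the one you flag, namely making the relaxation and linearisation errors uniform in $s\in[s_0,t]$ with the $t$--dependent argument $\tfrac\theta t f$.
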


\medskip

\begin{cor}\label{cor:joint_Yaglom}
Assume the setting of Theorem \ref{thm:Yaglom}. For $f_1, f_2 \in L_\infty^+(E)$, under $\mathbb P_{\delta_x}(\cdot \mid N_t > 0)$, we have the joint convergence
\[
  \left(\frac{\langle f_1, X_t\rangle}{t}, \frac{\langle f_2, X_t\rangle}{t} \right) \overset{d.}{\to} (Y(f_1), Y(f_2)),
\]
where $Y(f_1)$ and $Y(f_2)$ denote the same exponential random variable but with rates $\frac12 \langle  f_1, \tilde\varphi\rangle \langle  \mathcal V[\varphi], \tilde\varphi \rangle$ and $\frac12 \langle  f_2, \tilde\varphi\rangle \langle  \mathcal V[\varphi], \tilde\varphi \rangle$, respectively. 
\end{cor}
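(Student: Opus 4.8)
The plan is to deduce the joint convergence from the one-dimensional Yaglom limit of Theorem \ref{thm:Yaglom} applied to the single linear combination $f = \alpha_1 f_1 + \alpha_2 f_2$, exploiting that under the conditioned law the rescaled empirical masses against different test functions become perfectly correlated in the limit. First I would compute, for $\alpha_1, \alpha_2 \ge 0$, the joint Laplace transform
\[
  \mathbb E_{\delta_x}\left[ \exp\left(-\alpha_1 \tfrac{\langle f_1, X_t\rangle}{t} - \alpha_2 \tfrac{\langle f_2, X_t\rangle}{t}\right) \,\Big|\, N_t > 0\right]
  = \mathbb E_{\delta_x}\left[ \exp\left(-\tfrac{\langle \alpha_1 f_1 + \alpha_2 f_2, X_t\rangle}{t}\right)\,\Big|\, N_t > 0\right],
\]
and apply Theorem \ref{thm:Yaglom} with $f = \alpha_1 f_1 + \alpha_2 f_2 \in L_\infty^+(E)$ and $\theta = 1$. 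By linearity of $g \mapsto \langle g, \tilde\varphi\rangle$, this limit equals
\[
  \frac{1}{1 + \tfrac12 \langle \mathcal V[\varphi], \tilde\varphi\rangle \big(\alpha_1 \langle f_1, \tilde\varphi\rangle + \alpha_2 \langle f_2, \tilde\varphi\rangle\big)}.
\]
The final step is to recognise the right-hand side as the joint Laplace transform of the pair $(Y(f_1), Y(f_2))$, where these are \emph{the same} $\mathrm{Exp}(1)$ variable $U$ scaled by the respective constants: writing $c_i = \tfrac12 \langle f_i, \tilde\varphi\rangle \langle \mathcal V[\varphi], \tilde\varphi\rangle$, we have $Y(f_i) = c_i U$ so that $\mathbb E[e^{-\alpha_1 c_1 U - \alpha_2 c_2 U}] = (1 + \alpha_1 c_1 + \alpha_2 c_2)^{-1}$, matching the display above. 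Since convergence of joint Laplace transforms on $[0,\infty)^2$ to the Laplace transform of a random vector on $[0,\infty)^2$ implies convergence in distribution, this gives the claim; I would cite the standard continuity theorem for Laplace transforms of nonnegative random vectors (e.g.\ via Lévy's continuity theorem applied coordinatewise after noting the limit is a genuine transform).

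There is essentially no serious obstacle here: the only point requiring a word of care is the identification of the limiting object, namely that the limit law on $\mathbb R_+^2$ is carried by the diagonal line $\{(c_1 u, c_2 u) : u \ge 0\}$ rather than being a product of independent exponentials — this is exactly what the additive structure of the Laplace transform (a single pole, linear in $(\alpha_1,\alpha_2)$) encodes, and it reflects the fact that on survival the whole population mass, suitably normalised, concentrates on the stationary profile $\tilde\varphi$ so that $\langle f_i, X_t\rangle/t \approx \langle f_i,\tilde\varphi\rangle \cdot (\text{total normalised mass})$. If one wanted $f_1, f_2$ possibly of varying sign, one could either restrict to $\alpha_i \ge 0$ throughout (sufficient for convergence in distribution since all variables are nonnegative when $f_i \ge 0$) or split into positive and negative parts; but as stated $f_1, f_2 \in L_\infty^+(E)$, so the direct argument suffices.
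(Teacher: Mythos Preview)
Your argument is correct and complete: taking $f=\alpha_1 f_1+\alpha_2 f_2\in L_\infty^+(E)$ and invoking Theorem~\ref{thm:Yaglom} with $\theta=1$ gives the joint Laplace transform $(1+\alpha_1 c_1+\alpha_2 c_2)^{-1}$, which you correctly identify as that of the degenerate pair $(c_1U,c_2U)$ with $U\sim\mathrm{Exp}(1)$; the continuity theorem for multivariate Laplace transforms then yields the joint convergence.

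The paper, however, argues differently. Rather than applying Theorem~\ref{thm:Yaglom} as a black box to linear combinations, it appeals to the \emph{proof} of that theorem in \cite{BMPI}: one first establishes the Yaglom limit for the special test function $f=\varphi$, then for general $f$ one decomposes $f=\langle f,\tilde\varphi\rangle\,\varphi+(f-\langle f,\tilde\varphi\rangle\,\varphi)$ and shows that $\langle f-\langle f,\tilde\varphi\rangle\varphi,\,X_t\rangle/t\to 0$ in probability conditional on survival. Applying this remainder bound simultaneously to $f_1$ and $f_2$ exhibits both coordinates as deterministic multiples of the single random variable $\langle\varphi,X_t\rangle/t$ plus vanishing errors, from which the joint convergence is immediate. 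Your Cram\'er--Wold/Laplace approach is more self-contained and slightly slicker, using only the statement of Theorem~\ref{thm:Yaglom}; the paper's route is more explicit about \emph{why} the limit is supported on a line, and has the advantage of re-using an estimate already needed for the one-dimensional result. Either proof is perfectly adequate here.
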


The proof of this result follows easily from the proof of Theorem \ref{thm:Yaglom} in \cite{BMPI}. Indeed, the first (and main) step of the proof of Theorem \ref{thm:Yaglom} is to show that the result holds for $f = \varphi$. The result then follows for general $f$ by writing $f = \langle f, \tilde\varphi\rangle f + (f - \langle f, \tilde\varphi\rangle f)$ and showing that, conditional on survival up to time $t$, $\langle f-\langle f, \tilde\varphi\rangle f, X_t \rangle / t$ converges to zero. Applying this argument to both $f_1$ and $f_2$ proves the above corollary. 

\medskip

{Now we state a result that is well-known but is not explicitly stated in the literature in a fully general setting. The proof of this result follows verbatim the proof of \cite[Proposition 1.5]{ellen}.}

\begin{prop}\label{prop:Q-process}
    Assume the setting of Theorem \ref{thm:survival}. Let $R > 0$, $A \in \mathcal F_R$ and $x \in E$. Then
    \[
      \lim_{t \to \infty}\mathbb P_{\delta_x}(A \giv N_t > 0) = \mathbb P_{\delta_x}^\varphi(A).
    \]
\end{prop}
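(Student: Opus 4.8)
The plan is to derive the convergence of $\mathbb P_{\delta_x}(A \mid N_t > 0)$ to the spine-changed measure $\mathbb P_{\delta_x}^\varphi(A)$ by a classical change-of-measure/conditioning argument, exploiting the Kolmogorov asymptotics from Theorem~\ref{thm:survival} together with the branching Markov property. First I would write, for $A \in \mathcal F_R$ and $t > R$,
\[
\mathbb P_{\delta_x}(A \mid N_t > 0) = \frac{\mathbb P_{\delta_x}(A \cap \{N_t > 0\})}{\mathbb P_{\delta_x}(N_t > 0)}
= \frac{\mathbb E_{\delta_x}\!\left[ \mathbf 1_A \, \mathbb P_{X_R}(N_{t - R} > 0) \right]}{\mathbb P_{\delta_x}(N_t > 0)},
\]
where in the numerator I have conditioned on $\mathcal F_R$ and used the Markov property at time $R$, with $\mathbb P_{X_R}$ shorthand for $\mathbb P_\mu$ evaluated at the (random) configuration $\mu = X_R$. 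Here I am implicitly using that on $A$ (which may be a subset of $\{N_R > 0\}$, or $A$ may meet $\{N_R = 0\}$, in which case the inner probability is $0$ and contributes nothing, consistent with $\langle \varphi, X_R\rangle = 0$), so no boundary issue arises.

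Next I would multiply and divide by $t$ inside the expectation and by $t$ in the denominator:
\[
\mathbb P_{\delta_x}(A \mid N_t > 0) = \mathbb E_{\delta_x}\!\left[ \mathbf 1_A \cdot \frac{(t-R)\,\mathbb P_{X_R}(N_{t-R} > 0)}{t - R} \cdot \frac{t}{t \, \mathbb P_{\delta_x}(N_t > 0)} \right].
\]
By Theorem~\ref{thm:survival}, for each fixed configuration $\mu$ we have $(t-R)\mathbb P_\mu(N_{t-R} > 0) \to 2\langle \varphi, \mu\rangle / \langle \mathcal V[\varphi], \tilde\varphi\rangle$ as $t \to \infty$, and likewise $t\,\mathbb P_{\delta_x}(N_t > 0) \to 2\varphi(x)/\langle \mathcal V[\varphi], \tilde\varphi\rangle$; also $t/(t-R) \to 1$. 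Hence the integrand converges pointwise (on $A$, and using $X_R$ measurable w.r.t.\ $\mathcal F_R$) to $\mathbf 1_A \, \langle \varphi, X_R\rangle / \varphi(x) = \mathbf 1_A \, W_R$, which is precisely the density $\mathrm d\mathbb P_{\delta_x}^\varphi / \mathrm d\mathbb P_{\delta_x}$ on $\mathcal F_R$ from \eqref{E:COM}. If I can pass the limit inside the expectation I obtain
\[
\lim_{t \to \infty}\mathbb P_{\delta_x}(A \mid N_t > 0) = \mathbb E_{\delta_x}[\mathbf 1_A \, W_R] = \mathbb P_{\delta_x}^\varphi(A),
\]
as claimed.

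The main obstacle is justifying this interchange of limit and expectation, since Theorem~\ref{thm:survival} only gives pointwise (in $\mu$) convergence, not a uniform bound that is obviously integrable. The standard remedy, which is what the proof of \cite[Proposition~1.5]{ellen} does, is to establish a uniform-in-$t$ domination of the form $(t-R)\mathbb P_\mu(N_{t-R} > 0) \le C\langle \varphi, \mu\rangle$ for all $\mu$ and all large $t$ (equivalently, $t\mathbb P_{\delta_x}(N_t>0) \le C\varphi(x)$ together with the branching property and subadditivity/Markov estimates), so that the integrand is dominated by $C' \mathbf 1_A W_R \in L^1(\mathbb P_{\delta_x})$; one then applies dominated convergence. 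Such a uniform upper bound is typically obtained from the non-linear semigroup equation for $u_t(x) := \mathbb P_{\delta_x}(N_t = 0)^c = \mathbb P_{\delta_x}(N_t > 0)$ combined with \ref{a:PF}: writing the evolution equation for $\langle \varphi, u_t\rangle$ or using the known second-moment estimates from Theorem~\ref{thm:moments}, one shows $t u_t \le C\varphi$ uniformly. I would cite this step from \cite{BMPI, ellen} rather than reprove it, since in the critical finite-variance setting it is precisely the quantitative form of the Kolmogorov estimate. With the domination in hand the rest is routine, and the proof is complete.
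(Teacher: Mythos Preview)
Your proposal is correct and is precisely the classical argument that the paper invokes by citing \cite[Proposition~1.5]{ellen} verbatim: condition on $\mathcal F_R$, apply the Markov property, use the Kolmogorov asymptotics from Theorem~\ref{thm:survival} to identify the pointwise limit of the integrand as $\mathbf 1_A W_R$, and pass to the limit via a dominated-convergence argument based on a uniform bound of the form $t\,\mathbb P_{\delta_y}(N_t>0)\le C\varphi(y)$. You have also correctly isolated the one non-trivial step (the uniform domination) and pointed to the appropriate sources for it.
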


\bigskip

For $k \ge 0$, $t \ge 0$, $x \in E$ and $f \in L_\infty^+(E)$, define
\[
  \psi_t^{(k)}[f](x) = \mathbb E_{\delta_x}\left[\langle f, X_t\rangle^k\right]\quad
\text{ and } \quad 
  \eta_t^{(k)}[f](x) = \mathbb E_{\delta_x}\Big[\int_0^t \langle f, X_s\rangle{\rm d}s\Big].
\]

\begin{thm}\label{thm:moments}
Suppose that \ref{a:PF} holds and that for some $k \ge 2$, $\sup_{x \in E}\mathcal E_x[N^k] < \infty$. For $\ell \le k$ and $t \ge 0$, set
\[
  \Delta_t^{(\ell)} := \sup_{x \in E, f \in L_\infty^+(E)}\left|t^{-(\ell - 1)}\varphi(x)^{-1}\psi_t^{(\ell)}[f](x) - 2^{(\ell - 1)}\ell! \langle f, \tilde\varphi\rangle^{\ell}\langle \mathcal V[\varphi], \tilde\varphi\rangle^{\ell - 1} \right|,
\]
and 
\[
  \Theta_t^{(\ell)} := \sup_{x \in E, f \in L_\infty^+(E)}\left|t^{-(2\ell - 1)}\varphi(x)^{-1}\eta_t^{(\ell)}[f](x) - 2^{(\ell - 1)}\ell! \langle f, \tilde\varphi\rangle^{\ell}\langle \mathcal V[\varphi], \tilde\varphi\rangle^{\ell - 1}L_\ell \right|,
\]
where $L_1 = 1$ and for $\ell \ge 2$, $L_\ell$ is defined through the recursion $L_\ell = \frac{1}{2\ell - 1}\sum_{i = 1}^{\ell - 1}L_i L_{\ell - i}$.

\smallskip

Then, for all $\ell \le k$, 
\begin{align*}
\sup_{t \ge 0}\Delta_t^{(\ell)} < \infty, \qquad \lim_{t \to \infty}\Delta_t^{(\ell)}  = 0, \qquad
\sup_{t \ge 0}\Theta_t^{(\ell)} < \infty, \quad \text{ and } \quad \lim_{t \to \infty}\Theta_t^{(\ell)}  = 0.
\end{align*}
\end{thm}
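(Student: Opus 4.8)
The plan is to prove the statements about $\Delta_t^{(\ell)}$ and $\Theta_t^{(\ell)}$ together by induction on $\ell$, using the branching Markov property to derive evolution equations for the moment functionals $\psi_t^{(\ell)}[f]$ and $\eta_t^{(\ell)}[f]$, and then feeding in \ref{a:PF} (the spectral gap estimate for $\psi_t$) to control the resulting integrals. The base case $\ell = 1$ is essentially \ref{a:PF} itself: by definition $\psi_t^{(1)}[f] = \psi_t[f]$, so $\sup_t\Delta_t^{(1)} < \infty$ and $\Delta_t^{(1)} = O({\rm e}^{-\varepsilon t})$ are immediate from the hypothesis $\sup_{g \in L_{\infty,1}^+}\|\varphi^{-1}\psi_t[g] - \langle g, \tilde\varphi\rangle\| = O({\rm e}^{-\varepsilon t})$ (scaling out $\|f\|$); integrating in time gives $\eta_t^{(1)}[f](x) = \int_0^t \psi_s[f](x)\,{\rm d}s$, so $\varphi(x)^{-1}\eta_t^{(1)}[f](x) = t\langle f, \tilde\varphi\rangle + \int_0^t(\varphi(x)^{-1}\psi_s[f](x) - \langle f, \tilde\varphi\rangle)\,{\rm d}s$, and the integrand is bounded and exponentially decaying, which yields both $\sup_t \Theta_t^{(1)} < \infty$ and $\Theta_t^{(1)} \to 0$ (with $L_1 = 1$).

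For the inductive step, I would use the standard nonlinear renewal-type identity for moments of a branching process. Conditioning on the first branching event of the ancestral particle (or, equivalently, differentiating the semigroup identity for the Laplace functional), one obtains for $\psi_t^{(\ell)}[f]$ an equation of the schematic form
\[
  \psi_t^{(\ell)}[f](x) = \psi_t[f^{\ell}](x) + \int_0^t \psi_{t-s}\big[\, x' \mapsto \mathcal{G}_\ell\big(\{\psi_{?}^{(j)}[f](x')\}_{j < \ell}\big)\big](x)\,{\rm d}s,
\]
where the inhomogeneous term $\mathcal{G}_\ell$ is a polynomial in the strictly lower-order moment functionals $\psi_\cdot^{(j)}[f]$ with $j < \ell$, with coefficients built from $\gamma$ and the factorial moments $\mathcal E_x[\,\cdot\,]$ of the offspring measure $\mathcal Z$ (this is where $\sup_x \mathcal E_x[N^k] < \infty$ enters, guaranteeing these coefficients are in $L_\infty^+(E)$). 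By the induction hypothesis each $\psi_s^{(j)}[f](x')$ is, to leading order, $\varphi(x') s^{j-1} 2^{j-1}j!\langle f,\tilde\varphi\rangle^j\langle\mathcal V[\varphi],\tilde\varphi\rangle^{j-1}$ with a controlled error; substituting this into $\mathcal G_\ell$, keeping only the top-order term (which comes from the second factorial moment pairing, i.e. from $\mathcal V[\varphi]$, the product of two $(\ell-1)/2\cdots$ — more precisely from the splitting $\psi^{(i)}\psi^{(\ell-i)}$ paired against $\gamma \mathcal V$), and applying \ref{a:PF} to the outer $\psi_{t-s}$, one computes the leading $t^{\ell-1}$ asymptotics and identifies the constant $2^{\ell-1}\ell!\langle f,\tilde\varphi\rangle^\ell\langle\mathcal V[\varphi],\tilde\varphi\rangle^{\ell-1}$. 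The uniform-in-$t$ boundedness of $\Delta_t^{(\ell)}$ and the convergence $\Delta_t^{(\ell)} \to 0$ then follow by a Gronwall/bootstrap argument on the error terms, using that the error from each $\psi^{(j)}$ is $o(s^{j-1})$ uniformly. The statement for $\eta_t^{(\ell)}$ follows by an entirely parallel argument: there is an analogous renewal equation for $\eta_t^{(\ell)}[f]$ whose inhomogeneous term is a polynomial in $\{\psi_\cdot^{(j)}[f]\}$ and $\{\eta_\cdot^{(j)}[f]\}$ for $j < \ell$ (plus a linear term $\langle f, X_s\rangle$ integrated against the $(\ell-1)$-st power), and the recursion $L_\ell = \frac{1}{2\ell-1}\sum_{i=1}^{\ell-1}L_iL_{\ell-i}$ arises precisely from integrating $\int_0^t s^{2i-1}(t-s)^{2(\ell-i)-1}\,{\rm d}s$-type convolutions and matching the $t^{2\ell-1}$ coefficient — this is a Beta-function computation.

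The main obstacle I anticipate is bookkeeping rather than conceptual: carefully deriving the correct form of the nonlinear renewal equations for $\psi_t^{(\ell)}$ and $\eta_t^{(\ell)}$ in the presence of non-local branching (so that the polynomial $\mathcal G_\ell$ involves $\mathcal E_x$ applied to products $\prod f(x_i)$ over distinct offspring, not just powers of $m[f](x)$), and then showing that the sub-leading contributions — both the lower-order-in-$s$ pieces of $\mathcal G_\ell$ and the induction-hypothesis error terms — genuinely contribute only $o(t^{\ell-1})$ (resp. $o(t^{2\ell-1})$) after the outer integration, uniformly in $x$ and $f \in L_\infty^+(E)$. The exponential decay in \ref{a:PF} is the key tool making the uniformity work: it ensures $\psi_{t-s}[\,\cdot\,]$ projects onto $\langle\,\cdot\,,\tilde\varphi\rangle\varphi$ fast enough that the time-convolution integrals concentrate correctly and the errors do not accumulate. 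Since the paper remarks that a full proof appears in \cite{bmoments, moment_correction}, I would present the above as the structure and refer there for the detailed estimates.
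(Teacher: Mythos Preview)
The paper does not prove this theorem; it simply cites \cite{bmoments, moment_correction} for the proof. Your sketch --- induction on $\ell$, a renewal/evolution equation for $\psi_t^{(\ell)}$ obtained from the branching Markov property, substitution of the induction hypothesis into the lower-order terms, and use of the spectral-gap estimate in \ref{a:PF} to control the outer semigroup and the errors --- is exactly the strategy carried out in those references, so your proposal is correct and aligned with the intended argument.
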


\subsection{A functional central limit theorem}

For ease of reference, we present a result that was stated and proved in \cite[Theorem 3.12]{cdrg}, which we will use in the proof of Theorem \ref{thm:main}.

\begin{thm}[Functional Central Limit Theorem] \label{thm:CLT}
	Let $(Z_N(t))_{t\ge 0}$ denote a sequence of càdlàg processes, which may be defined on diﬀerent probability spaces. Suppose that
 $t\mapsto Z_N(t)-Z_N(0)$ are local square integrable martingales, and assume moreover
that:
\begin{itemize}
	\item[(a)]$Z_N(0)$  converge in distribution as $N\to \infty$;
	\item[(b)] the following limit holds, $\lim_{N\to \infty} \mathbb{E}[\sup_{[0,T]}|\Delta Z^N(t)|^2]=0$;
	\item[(c)] for each $N$ the predictable quadratic variation $\langle Z^N \rangle_t$ is continuous; and 
	\item[(d)] there exists a continuous increasing deterministic function $t\mapsto v(t)$ such that, for all $t\in [0,T]$, 
	\[ \langle Z^N\rangle_t \overset{.}{\rightarrow} v(t) \text{ as } N\to \infty.
    \]
\end{itemize}
Then $(Z_N(t))_{t\in [0,T]}$ converges in law (under the Skorokhod topology) towards
$(Z(t))_{t\in [0,T]}$, where $(Z(t)-Z(0))_{t\in [0,T]}$ is a Gaussian process, independent of $Z(0)$, with independent increments and variance function $v(t)$.
\end{thm}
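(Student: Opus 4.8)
The plan is to combine a martingale tightness criterion with the classical characterisation of continuous Gaussian martingales, so that every subsequential limit is pinned down and the full sequence converges. Write $M_N(t):=Z_N(t)-Z_N(0)$, a càdlàg local square-integrable martingale null at $0$; it then suffices to prove that $(Z_N(0),M_N)$ converges in law, with limit $(Z(0),M)$, where $Z(0)$ is the distributional limit of $Z_N(0)$ (which exists by (a)) and $M$ is a continuous Gaussian process with independent increments, variance function $v$, and independent of $Z(0)$. The claim for $Z_N=Z_N(0)+M_N$ then follows since adding a real constant to a path is continuous on Skorokhod space.

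\textbf{Step 1 (C-tightness).} First I would show that $(M_N)_N$ is tight in $D([0,T],\mathbb R)$ with all subsequential limits supported on $C([0,T],\mathbb R)$. This is the Aldous--Rebolledo criterion for local martingales null at $0$: tightness follows once the laws of $\langle M_N\rangle_t$ are tight on $\mathbb R$ for each $t$ in a dense set -- which holds by (d), convergence in distribution to a constant being automatically tight -- and once the maximal jumps $\sup_{[0,T]}|\Delta M_N(t)|\to 0$ in probability, which follows from (b) (noting $\Delta M_N=\Delta Z_N$); the vanishing of jumps forces limit points onto $C([0,T],\mathbb R)$. With (a) this gives joint tightness of $(Z_N(0),M_N)$. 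I would also record that, $\langle M_N\rangle$ and $v$ being continuous and nondecreasing with $\langle M_N\rangle_t\to v(t)$ pointwise in probability, a Dini-type argument upgrades this to $\sup_{[0,T]}|\langle M_N\rangle_t-v(t)|\to 0$ in probability.

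\textbf{Step 2 (identification of the limit).} Along a subsequence with $(Z_N(0),M_N)\Rightarrow(Z(0),M)$ and $M$ continuous, I would show that $M$ is a martingale for $\mathcal G_t:=\sigma(Z(0),M_r:r\le t)$ with $\langle M\rangle_t=v(t)$. Both facts come from letting $N\to\infty$ in
\[
\mathbb E\big[(M_N(t)-M_N(s))\,H_N\big]=0
\quad\text{and}\quad
\mathbb E\big[\big(M_N(t)^2-\langle M_N\rangle_t-M_N(s)^2+\langle M_N\rangle_s\big)\,H_N\big]=0,
\]
for $0\le s\le t\le T$ and $H_N=g(Z_N(0),M_N(s_1),\dots,M_N(s_k))$ with $s_1<\dots<s_k\le s$ and $g$ bounded continuous; since $M$ is a.s.\ continuous the relevant finite-dimensional laws converge, the first identity uses the local martingale property of $M_N$, the second that of $M_N^2-\langle M_N\rangle$, and $\sup_{[0,T]}|\langle M_N\rangle_t-v(t)|\to 0$ replaces $\langle M_N\rangle$ by $v$ in the limit. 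The delicate point -- which I expect to be the main obstacle -- is the uniform integrability needed to exchange limit and expectation, since neither $\mathbb E[M_N(t)^2]$ nor $\mathbb E[\langle M_N\rangle_t]$ is a priori bounded in $N$. I would handle this by localising at $\sigma_N^K:=\inf\{t\ge 0:\langle M_N\rangle_t\ge K\}$: by continuity of $\langle M_N\rangle$ (hypothesis (c)) the stopped process $M_N^{\sigma_N^K}$ is a genuine $L^2$-martingale with second moment $\le K$, while for $K>v(T)$ hypothesis (d) gives $\mathbb P(\sigma_N^K\le T)\to 0$, so $M_N$ and $M_N^{\sigma_N^K}$ have identical asymptotics on $[0,T]$ and one may work with the uniformly $L^2$-bounded family; a further truncation of $M_N^{\sigma_N^K}$ at a large level (harmless because the jumps vanish) makes the family bounded and disposes of the quadratic terms. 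This yields $\langle M\rangle_t=v(t)$.

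\textbf{Step 3 (Gaussianity and conclusion).} For a continuous martingale $M$ with $M(0)=0$ and deterministic continuous quadratic variation $v$, Itô's formula shows that $\exp(\mathrm{i}\theta M_t+\tfrac12\theta^2 v(t))$ is a bounded -- hence true -- martingale for every $\theta\in\mathbb R$, so that $\mathbb E[\mathrm e^{\mathrm{i}\theta(M_t-M_s)}\mid\mathcal G_s]=\mathrm e^{-\frac12\theta^2(v(t)-v(s))}$; this identifies $M$ as a Gaussian process with independent increments and variance function $v$, each increment being independent of $\mathcal G_s\supseteq\sigma(Z(0))$, whence $M$ is independent of $Z(0)$. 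Since this limiting law does not depend on the chosen subsequence, the tightness from Step 1 promotes it to convergence of the whole sequence, and adding $Z_N(0)$ back completes the argument. The continuity assumption (c), though seemingly technical, is precisely what makes the localisation in Step 2 clean; see \cite{cdrg} for the detailed execution.
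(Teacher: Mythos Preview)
The paper does not prove this theorem: it is quoted verbatim from \cite[Theorem 3.12]{cdrg} as a tool, so there is no in-paper argument to compare against. Your outline is the standard martingale-CLT route (Rebolledo-type C-tightness, then identification of the limit via the martingale problem, then L\'evy's characterisation), and it is correct in spirit; indeed you yourself defer the details to \cite{cdrg}.

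One point worth tightening if you want this to stand alone: in Step~2 the passage to the limit in
\[
\mathbb E\big[\big(M_N(t)^2-\langle M_N\rangle_t-M_N(s)^2+\langle M_N\rangle_s\big)\,H_N\big]=0
\]
needs uniform integrability of the \emph{squared} terms, and stopping at $\sigma_N^K$ only gives an $L^2$ bound on $M_N^{\sigma_N^K}$, not on its square. Your parenthetical ``further truncation at a large level'' is the right fix, but note that the overshoot at that stopping time has to be controlled, and it is precisely hypothesis~(b) (the $L^2$ control on the maximal jump) that makes this work: stopping at $\tau_N^L=\inf\{t:|M_N^{\sigma_N^K}(t)|\ge L\}$ gives $|M_N^{\sigma_N^K\wedge\tau_N^L}|\le L+\sup_{[0,T]}|\Delta M_N|$, whose square is uniformly integrable, while $\mathbb P(\tau_N^L\le T)\le 4K/L^2$ uniformly in $N$ by Doob. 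This is exactly where (b) and (c) earn their keep, and it is worth saying so explicitly rather than leaving it as ``harmless''.
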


\subsection{Carré du Champ operator}\label{subsec:CC}
The preliminaries in this subsection will be used in section \ref{sec:martingale} to identify our key martingale and compute its quadratic variation. We assume that \ref{a:VAR}, \ref{a:PF} and \ref{a:ERG} hold. Recalling $\varphi$ from assumption \ref{a:PF}, we set
\[
 \Gamma(\varphi) = L\varphi^2 - 2\varphi L\varphi.
\]
{As discussed after the statement of Assumption \ref{A:main}, \ref{a:VAR} and \ref{a:PF} imply that $\varphi$ is in the domain of the extended generator of $(\xi, \bfP)$ so that, in particular, } 
\[
  C_t := \varphi(\xi_t) - \varphi(x) - \int_0^tL\varphi(\xi_s) ds
\]
is a martingale. Moreover, as stated in \cite[Section 4.3]{cdrg}, its predictable quadratic variation is given by 
\begin{equation}\label{eq:Gamma}
  t \mapsto \int_0^t \Gamma(\varphi)(\xi_s) ds = \langle C \rangle_t, \qquad t\ge 0.
\end{equation}

Moreover, if we set $\tilde{C}_t=e^{-\int_0^t \gamma(\xi_s) ds}C_t+\int_0^t \gamma(\xi_s)e^{-\int_0^s \gamma(\xi_s) ds} C_s$ then 
$$d\tilde{C}_t=-\gamma(\xi_t)C_t e^{-\int_0^t \gamma(\xi_s) ds} dt+e^{-\int_0^t \gamma(\xi_s) ds} dC_t +\gamma(\xi_t)C_t e^{-\int_0^t \gamma(\xi_s) ds} dt = e^{-\int_0^t \gamma(\xi_s) ds} dC_t,$$ 
so $\tilde{C}_t$ is a is a càdlàg local martingale which is deterministically bounded for $t\in [0,T]$ for any $T > 0$. In particular, {conditionally on $(\xi_s)_{s\ge 0}$ let  $\sigma$ be a exponential random variable with rate $\gamma(\xi)$, that is, the probability that $\sigma$ exceeds $t$ is $\exp(-\int_0^t \gamma(\xi_s)) ds$. Then we have that $\bfE_x[C_{t\wedge \sigma}]=\bfE_x[\tilde{C}_t]$ and $\bfE_x[\tilde{C}_t]=0$ so 
\begin{equation}\label{eq:stoppedCexp} 
	\bfE_x[C_{t\wedge \sigma}]=0.
\end{equation}}

\subsection{Labelling}\label{subsec:labels}
In order to describe the genealogical structure of our branching Markov process, we introduce the set of Ulam-Harris labels, that we will associate to particles in the MBP in the standard manner:
\[ \Omega:=\{\emptyset\}\cup \bigcup_{n\ge 1} \mathbb{N}^n. \]
{To define these labels, at a branching event where particle with label $v$ has $N$ offspring, the labels $v1,v2,\dots vN$ are assigned to the offspring particles according to some ordering which is measurable with respect to the particle positions.} For $v, w \in \Omega$, we write $v \preceq w$ to mean that $v$ is an ancestor of $w$, which means there exists $u \in \Omega$ such that $vu = w$. {Moreover, we write $v \prec w$ to mean that $v \preceq w$ in the strict sense, that is, the possibility that $v=w$ is excluded.} We say that $v, w \in \Omega$ are siblings if there exists $u \in \Omega$ and $i \neq j$ such that $v = ui$ and $w = uj$. 

Given $v \in \Omega$, let $b_v$ denote the birth time of particle $v$ and $d_v$ denote the death time (with $b_v=d_v=\infty$ if label $v$ does not correspond to any particle in the MBP), and let $X_v(t)$ denote its position at time $t$ during its lifetime. {If a particle in the system is sent to $\dagger$ before a branching, its death time is the time it is sent to $\dagger$}. For each $v \in \Omega$, let $N_v$ denote the number of offspring produced by $X_v(d_v)$. Set $\mathcal{N}_t := \{v \in \Omega : t \in [b_v, d_v)\}$ so that $N_t = |\mathcal{N}_t|$.

\paragraph{Depth-first exploration.} Now let $\preceq_{\rm lex}$ denote the lexicographical ordering of vertices and set $\hat\Omega := \{v \in \Omega : b_v < \infty\}$. Then, for $v \in \hat\Omega$, set
\[
\sigma_v := {\sum_{w \prec_{\rm lex}v}}(d_w - b_w), 
\quad \text{ and } \quad 
  \tau_v := \sigma_v + (d_v - b_v).
\]

For $t \ge 0$, there exists a unique $v = v_t$ such that $t \in [\sigma_v, \tau_v)$. With this notation, let $h_t = b_{v_t} + (t-\sigma_{v_t})$ and $\zeta_t = X_{v_t}(h_t)$. Then, the depth-first exploration at time $t$ is given by $v_t$. In words, we move around the genealogical tree at speed one in a depth first order (without backtracking), and in this exploration, $v_t$ is the label of the particle being visited at time $t$, $h_t$ is its `height' in the tree (i.e.~the natural time in the branching process), and $\zeta_t$ is its position. 
\begin{figure}[h]
\centering
\includegraphics[width=.5\textwidth]{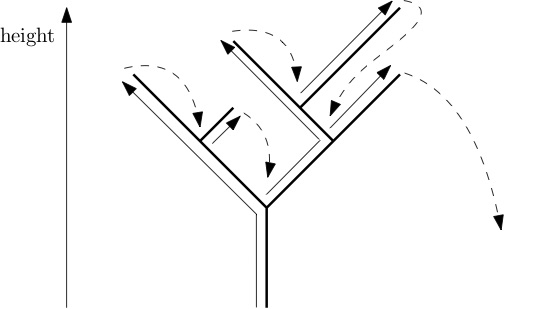}
\end{figure}

Let $E_t := \{v \in \hat\Omega : \tau_v \le t\}$ denote the set of (fully) explored particles and $D_t = \{vi \in \hat\Omega : v \in E_t \} \backslash (E_t \cup \{v_t\})$ denote the discovered particles that are yet to be explored. We then define the length of the tree to be $L = \sup\{\tau_v : v \in \hat \Omega\} = \int_0^\infty N_s ds$.

We extend this notation to an i.i.d. sequence of branching processes, $(X^i)_{i \ge 1}$. For $u = (i, v) \in \mathbb N \times \hat\Omega$, write $b_u$ and $d_u$ for the birth and death times, respectively, of particle $v$ in $X^i$. Similarly, we extend the definitions of $\sigma_u$ and $\tau_u$ to $u \in \mathbb N \times \hat\Omega$, where we understand $(i_1, v_1) \preceq_{\rm lex}(i_2, v_2)$ to mean $i_1v_1 \preceq_{\rm lex} i_2v_2$. Then, there exists a unique $(i, v) = (I_t, v_t)$ such that $t \in [\sigma_{(i, v)}, \tau_{(i, v)}]$, which defines the depth-first exploration process for the sequence of trees. We still write $h_t = b_{(I_t, v_t)} + (t - \sigma_{(I_t, v_t)})$ and $\zeta_t = X_{v_t}^{I_t}(h_t)$ to denote the height process and associated position in $E$, respectively. We also extend the definitions of $E_t$ and $D_t$ in a similar way.

\subsection{Metric measure spaces and convergence}\label{subsec:mmspace}
Here we introduce the necessary notation to allow us to view the genealogical tree associated to a MBP as a (pointed) metric measure space. We mostly follow \cite{felix}; we also refer the reader to \cite{felixmmspace, ellen, mmm-space, gap16}.

In general, a pointed metric measure space is a quadruple $\mathcal X = (X, d, \nu, {\bf r})$, where $(X, d)$ is a complete separable metric space, $\nu$ is a finite measure on $X$ and ${\bf r}$ is the root. We let $\mathbb X$ denote the equivalence classes of (pointed) metric measure space, where $\mathcal X$ and $\mathcal Y$ are equivalent if there exists an isometry 
\[
 \iota : \{{\bf r}_X\} \cup {\rm supp} \, \nu_X \to \{{\bf r}_Y\} \cup {\rm supp} \, \nu_Y
\]
such that $\iota({\bf r}_X)={\bf r}_Y$ and $\nu_X \circ \iota^{-1} = \nu_Y$. 

\medskip

We now introduce the Gromov-weak and Gromov-Hausdorff-weak topologies.

\medskip

\noindent{\bf Gromov-weak topology.} For $k \ge 1$ and $\varphi : [0, \infty)^{k+1 \choose 2} \to \mathbb R$, define $\Phi : \mathbb X \to \mathbb R$ via
\[
 \Phi(\mathcal X) = \int_{X^k} \varphi(d({\bf x}))\mu^{\otimes k}({\rm d}{\bf x}),
\]
where, for ${\bf x} = (x_1, \dots, x_k) \in X^k$, $d({\bf x}) = (d(x_i, x_j))_{0 \le i, j \le k}$ and $x_0 = {\bf r}$. A sequence $(\mathcal X_n)_{n \ge 1}$ is said to converge to $\mathcal X$ in the Gromov-weak topology if, and only if, 
\begin{equation}\label{eq:monomial}
 \Phi(\mathcal X_n) \to \Phi(\mathcal X), \quad n \to \infty,
\end{equation}
for all $\Phi$ defined as in \eqref{eq:monomial} above with $\varphi$ bounded and continuous. 

\medskip

\noindent {\bf Gromov-Hausdorff-weak topology.} We say that $(\mathcal X_n)_{n \ge 1}$ converges to $\mathcal X$ in the Gromov-Hausdorff-weak topology if there exists a complete separable metric space $(Z, d_Z)$ and isometries $\iota_n : \{{\bf r}_n\} \cup {\rm supp}\, \nu_n \to Z$, $n \in \mathbb N \cup \{\infty\}$ such that, as $n \to \infty$
\[
 \iota_n({\bf r}_n) \to \iota_\infty \quad \text{and} \quad \iota_n({\rm supp}\, \nu_n) \to \iota_\infty({\rm supp}\, \nu_\infty),
\]
in $(Z, d_Z)$ and in the Hausdorff topology, respectively, and 
\[
  \nu_n \circ \iota_n^{-1} \to \nu_\infty \circ \iota_\infty^{-1}, \quad n \to \infty,
\]
weakly, as measures on $Z$.

\medskip

In our setting, which will be made more concrete shortly, we will first prove convergence in the Gromov-weak topology and then prove a criterion that will allow us to strengthen the Gromov-weak convergence to Gromov-Hausdorff-weak convergence. Before stating the criterion, let us define the so-called lower mass function, which is defined, for $\delta > 0$ and $\mathcal X \in \mathbb X$ as
\begin{equation}\label{eq:LMF}
  m_\delta(\mathcal X) := \inf\{|B_{\delta, x}(\mathcal X)| : x \in {\rm supp}\, \nu\},
\end{equation}
where $B_{\delta, x}(\mathcal X)$ is the closed ball of radius $\delta$ centred at $x \in X$ {and $|A|:=\nu(A)$ for $A\subset X$.}

\begin{lemma}[Gromov-weak to Gromov-Hausdorff-weak]\label{lem:GW-to-GHW}
    Suppose that $(\mathbb P_n, n \ge 1)$ is a sequence of %locally finite  
    probability measures converging weakly to $\mathbb P$ with respect to the Gromov-weak topology. Suppose that for each $ \delta > 0$, 
    \begin{equation}\label{eq:LMprop}
        \lim_{\eta' \to 0}\limsup_{n \ge 1}\mathbb P_n( m_\delta(\mathcal X) < \eta') = 0.
    \end{equation}
    Then $(\mathbb P_n)_{n \ge 1}$ also converges weakly to $\mathbb{P}$ in the Gromov-Hausdorff-weak topology.
\end{lemma}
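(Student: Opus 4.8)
The plan is to deduce this from the known characterization of Gromov-Hausdorff-weak convergence in terms of Gromov-weak convergence supplemented by a tightness-type control on the lower mass function; this is precisely the kind of criterion developed in \cite{felix, felixmmspace} (and also \cite{mmm-space, gap16}). The starting point is the general principle that a sequence in $\mathbb X$ that converges in the Gromov-weak topology and is such that the family $\{\mathcal X_n\}$ is relatively compact in the Gromov-Hausdorff-weak topology must in fact converge to the same limit in the (finer) Gromov-Hausdorff-weak topology, because the Gromov-weak topology separates points of $\mathbb X$ and the Gromov-Hausdorff-weak topology is finer; so any Gromov-Hausdorff-weak subsequential limit must be Gromov-weakly equal to $\mathcal X$, hence equal in $\mathbb X$. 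Thus the whole content is to upgrade weak Gromov-weak convergence of the laws $\mathbb P_n$ to tightness in the Gromov-Hausdorff-weak topology, and for that one uses the characterization of relative compactness for (pointed) metric measure spaces: a family is relatively Gromov-Hausdorff-weakly compact if and only if it is relatively Gromov-weakly compact and the lower mass functions do not degenerate, i.e. for every $\delta>0$, $\inf_n \mathbb P_n(m_\delta(\mathcal X) \ge \eta') \to 1$ as $\eta' \to 0$ — which is exactly hypothesis \eqref{eq:LMprop}.

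Concretely, I would proceed as follows. First, since $(\mathbb P_n)$ converges Gromov-weakly to $\mathbb P$, it is in particular Gromov-weakly tight; combined with \eqref{eq:LMprop}, invoke the characterization of Gromov-Hausdorff-weak relative compactness (the analogue for pointed mm-spaces of \cite[Theorem 2.6]{gap16} / the criteria in \cite{felixmmspace, mmm-space}) to conclude that $(\mathbb P_n)_{n\ge1}$ is tight in the Gromov-Hausdorff-weak topology. By Prokhorov's theorem, every subsequence has a further subsequence converging Gromov-Hausdorff-weakly to some limit law $\mathbb P'$. Second, the identity map $\mathbb X_{\mathrm{GHW}} \to \mathbb X_{\mathrm{GW}}$ is continuous (the Gromov-Hausdorff-weak topology is finer than the Gromov-weak one: Hausdorff convergence of supports plus weak convergence of measures pushed into a common $Z$ implies convergence of all the polynomials $\Phi$ in \eqref{eq:monomial} via the bounded continuous test functions $\varphi$). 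Hence along that subsequence $\mathbb P_n \to \mathbb P'$ Gromov-weakly as well, and by uniqueness of Gromov-weak limits $\mathbb P' = \mathbb P$. Since every subsequence has a further subsequence converging Gromov-Hausdorff-weakly to the same limit $\mathbb P$, the full sequence $(\mathbb P_n)$ converges Gromov-Hausdorff-weakly to $\mathbb P$.

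The one point requiring care — and the main obstacle — is the precise form of the relative-compactness criterion for \emph{pointed} metric measure spaces with a distinguished root, since most references state it for unpointed mm-spaces. One must check that adjoining the root $\{\mathbf r\}$ to $\mathrm{supp}\,\nu$ does not spoil either the Gromov-weak compactness transfer or the lower-mass-function control; this is harmless because the root is a single point and the relevant metric/measure data is that of $\{\mathbf r\}\cup\mathrm{supp}\,\nu$ with the measure $\nu$ unchanged, so the standard criterion applies verbatim after this cosmetic adjustment (as is done in \cite{felix}). A secondary, routine, point is to make sure ``$\limsup_{n\ge1}$'' in \eqref{eq:LMprop} is read as a supremum over all $n$ (uniform tightness), which is what feeds the Prokhorov argument; with that reading the proof is complete. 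I expect the bulk of the write-up to be citing and lightly adapting the relevant compactness theorem rather than any new computation.
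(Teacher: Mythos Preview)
Your proposal is correct and follows the standard route: Gromov-weak tightness together with the lower mass bound \eqref{eq:LMprop} yields Gromov--Hausdorff--weak tightness, and since the identity from the GHW topology to the GW topology is continuous, every GHW subsequential limit must coincide with $\mathbb P$. The paper does not give its own proof of this lemma at all; it simply refers the reader to \cite[Lemma 4.5]{felix}, so your argument is exactly the content one would find by following that citation.
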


For a proof of the above result, we refer the reader to \cite[Lemma 4.5]{felix}. We now formally describe how to build a metric measure space from a MBP.

Given a (critical) MBP, $X = (X_t)_{t \ge 0}$, let $N_\infty$ denote the total progeny of individuals in the Ulam Harris tree of all individuals to have ever been born in $X$, and let $\mathcal U := \{\emptyset := u_0, u_1, \dots, u_{N_{\infty}}\}$ denote the labels of these particles in depth-first order. Further, let $T$ denote the continuous plane tree with vertices given by $\mathcal U$ and branch lengths given by the lifetimes of the particles, i.e. the length of the branch rooted at $u \in \mathcal U$ is $d_u - b_u$. Recall the total tree length, $L$, and the height function, $(h_t)_{t \ge 0}$, defined in the previous section and, for $s, t \in [0, L)$, define
\[
  d(s, t) = h_s + h_t - 2\inf_{u \in [s \wedge t, s \vee t]}h_u.
\]
Then, since $d(s, t) = 0$ if $s$ and $t$ correspond to two times a branch point is visited in the depth-first exploration, $d$ defines a pseudometric on $[0, L)$. Thus, we define the metric space $(T, d)$ to be $([0, L)/\!\!\sim , d)$, where $\sim$ is the equivalence relation defined by $s \sim t \Leftrightarrow d(s, t) = 0$. We equip $(T, d)$ with the measure $\nu$, the push forward of the Lebesgue measure on $[0, L)$ under the equivalence relation $\sim$.

Finally, we define the Brownian continuum random tree. Let ${\bf e} = ({\bf e}_t)_{0 \le t \le \tau}$ denote a Brownian excursion with a certain speed $c=\sigma^2(f)$ (a constant defined below in \eqref{eq:sf}), conditioned to have maximum height at least one, where $\tau$ denotes the length of the excursion (and the height is defined by $\sup_{t\le \tau} \ee_t$). More precisely, the law of $\ee$ is given by the reflected (speed $c$) one-dimensional Brownian motion It\^{o} excursion measure, restricted to paths whose supremum exceeds $1$, and normalised to be a probability measure. Define
\[
  d_{\bf e}(s, t) = {\bf e}_s + {\bf e}_t - 2\min_{r \in [s, t]}{\bf e}_r, \qquad 0 \le s \le t.
\]
Again, this defines a pseudometric on $[0, \tau)$ and thus, $(T_{\bf e}, d_{\bf e}) := ([0, \tau)/\!\!\sim, d_{\bf e})$ defines a metric space, where $\sim$ is the equivalence relation that identifies points with $d_{\bf e}(\cdot, \cdot) = 0$. Similarly to above, we equip this metric space with the measure $\mu_{\bf e}$, the push forward of the Lebesgue measure on $[0, \tau)$.  

\section{Convergence to the CRT}\label{sec:proof}

\subsection{The Martingale}\label{sec:martingale}

In this section we introduce the key martingale $M$, which is an analogue of the Lukasiewicz path for Galton--Watson processes (see for example \cite{LeGall2005}).

We let $\mathbb{P}^{\star}_{\delta_x}$ denote the law of an i.i.d.\,sequence of branching processes started with a single particle at $x$, i.e.\,with law $\mathbb{P}_{\delta_x}$. Recall that $I_t$ denotes the index of the tree being visited at time $t$ in the depth-first exploration and $D_t$ is the set of ``discovered but yet-to-be explored'' particles at time $t$ in the depth-first exploration.

We set 
\[ 
M_t = \varphi(\zeta_t) + \sum_{w\in D_t} \varphi(X_{w}(b_w))-\sum_{w\in E_t \cap \mathbb{N}} \varphi(X_{w}(b_w))
\]
which is a càdlàg process by definition. Note that if $w\in E_t\cap \mathbb{N}$ then $w$ is the root vertex of some tree and $X_{w}(b_w)=x$; i.e.~the third term above can be equivalently written as $(I_t-1)\varphi(x)$.
We also define
\[
m_t = \varphi(\zeta_t)-\varphi(X_{v_t}(b_{v_t})) - \int_0^t (L\varphi)(\zeta_s) \, ds  + \sum_{w\in E_t} \varphi(X_{w}(d_w))-\varphi(X_{w}(b_w)).
\]

One can think of $(M_t)_{t \ge 0}$ as being the martingale $W$ from \eqref{E:W} for the MBP, but explored in depth-first rather than a breadth-first order. We introduce $(m_t)_{t \ge 0}$ for the purpose of proving the (conceptually clear but technically non-trivial) martingale property of $M$, and identifying its quadratic variation explicitly.

\begin{lemma}\label{lem:mgs}
Under Assumption \ref{A:main}, $m_t$ and $M_t$ are local martingales under $\mathbb{P}^{\star}_x$ with respect to the filtration $\mathcal{F}_t $ generated by $(v_s,\zeta_s,D_s,\{X_w(b_w): w\in D_s\})_{s\le t}$. Moreover, their predictable quadratic variations are given by 
	\[
	\langle m \rangle_t = \int_0^t \Gamma\varphi(\zeta_s) ds 	\vspace{-.5cm}
	\]
 and
	\begin{equation}\label{eq:f}
		\langle M \rangle_t = \int_0^t f(\zeta_s) ds \quad \text{ where } \quad f(x) = \Gamma\varphi(x)+\gamma(x)\mathcal{E}_x\bigg[\bigg(\sum_{i=1}^N \varphi(x_i)-\varphi(x)\bigg)^2\bigg].
	\end{equation}
\end{lemma}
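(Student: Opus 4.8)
\emph{Plan.} The strategy is to realise $m$ and $M$ as martingales concatenated along the depth-first exploration out of the two sources of randomness in the forest: the motion of each particle, and the offspring point process at each death event. For a particle $w$ visited by the exploration, let $C^{(w)}$ be the Carr\'e du champ martingale of its motion, $C^{(w)}_s = \varphi(X_w(s)) - \varphi(X_w(b_w)) - \int_{b_w}^s L\varphi(X_w(r))\, dr$ for $s\in[b_w,d_w]$ (reading $X_w(d_w)$ as the left limit), which by \eqref{eq:Gamma} has predictable quadratic variation $\int_{b_w}^s\Gamma\varphi(X_w(r))\, dr$ and, by \eqref{eq:stoppedCexp} together with \ref{a:CC} (and \ref{a:HK} at the absorbing boundary, where $\varphi\to0$), is a true square-integrable martingale up to $d_w$. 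With this notation I would establish the decomposition
\[
M_t = m_t + \varphi(x) + J_t, \qquad J_t := \int_0^t L\varphi(\zeta_s)\, ds + \sum_{w\in E_t}\big(\langle\varphi,\mathcal Z_w\rangle - \varphi(X_w(d_w))\big),
\]
where $\mathcal Z_w$ denotes the offspring point process born at the death of $w$, and analyse $m$ and $J$ separately; heuristically $M$ is the ``$W$-martingale'' of \eqref{E:W} read off in depth-first order.

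\emph{The process $m$.} On the interval $[\sigma_v,\tau_v)$ during which $v_t=v$ one has $\zeta_t=X_v(h_t)$, so $dm_t = dC^{(v)}_{h_t}$; and at the transition time $\tau_v$ a short computation gives $\Delta m_{\tau_v}=0$, because $\varphi(\zeta_{\tau_v-})=\varphi(X_v(d_v))$ enters both the jump of the term $\varphi(\zeta_t)$ and the term $\varphi(X_v(d_v))-\varphi(X_v(b_v))$ newly added to the sum over $E_t$, while $-\varphi(X_{v_t}(b_{v_t}))$ absorbs the change of reference particle — and this is uniform over transitions to a first child, to a later sibling or uncle, and to the root of the next tree. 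Hence $m_t$ is the concatenation $\sum_{w:\sigma_w\le t}\big(C^{(w)}_{h_{t\wedge\tau_w}}-C^{(w)}_{b_w}\big)$ of the motion martingales read off along the exploration. As $\mathcal F_t$ reveals exactly the motion of the currently explored particle and the birth positions of the discovered ones, $m$ is $\mathcal F_t$-adapted. Localising at $R_n:=\tau_{w_n}$, $w_n$ the $n$-th particle visited (so $R_n=\sum_{k\le n}(d_{w_k}-b_{w_k})\uparrow L=\infty$ $\mathbb P^\star_x$-a.s., using \ref{a:EXT} and that there are infinitely many trees), $m_{\cdot\wedge R_n}$ is a finite sum of stopped motion martingales, each started at an $\mathcal F$-stopping time from an $\mathcal F$-measurable position, hence a genuine martingale by the strong Markov property and the independence of the motions; thus $m$ is a local martingale. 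Accumulating the pieces and using \eqref{eq:Gamma} gives $\langle m\rangle_t=\int_0^t\Gamma\varphi(\zeta_s)\, ds$.

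\emph{The process $M$.} The identity above is verified by comparing values at $0$ and increments, using the explicit form $M_t=\varphi(\zeta_t)+\sum_{w\in D_t}\varphi(X_w(b_w))-(I_t-1)\varphi(x)$: between death events $dM_t = dC^{(v_t)}_{h_t}+L\varphi(\zeta_t)\, dt$ (here $D_t,I_t$ are constant), and at each death event of the current particle $v$ one finds $\Delta M_t=\langle\varphi,\mathcal Z_v\rangle-\varphi(\zeta_{t-})$ in every case — when a death completes an entire tree, the $+\varphi(x)$ from the appearance of the next root is exactly cancelled by the $-\varphi(x)$ coming from the increment of $I_t$. Consequently $J$ is a pure-jump process with continuous compensator $\int_0^t L\varphi(\zeta_s)\, ds$, whose jumps occur at the branching (death) events of the current particle, at rate $\gamma(\zeta_{t-})$ and of size $\langle\varphi,\mathcal Z\rangle-\varphi(\zeta_{t-})$ with $\mathcal Z$ of law $\mathcal P_{\zeta_{t-}}$ (death-by-killing contributes zero-size jumps, as $\varphi$ vanishes at the cemetery and at the absorbing boundary by \ref{a:HK}). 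Since $\psi_t[\varphi]=\varphi$ forces $L\varphi+\gamma(m[\varphi]-\varphi)=0$ (the generator of $\psi$ being $Lg+\gamma(m[g]-g)$, see Lemma~\ref{L:mt1}), the compensator of these jumps equals $\int_0^t\gamma(\zeta_s)(m[\varphi](\zeta_s)-\varphi(\zeta_s))\, ds=-\int_0^t L\varphi(\zeta_s)\, ds$, cancelling the drift; hence $J$ is a (local) martingale with $\langle J\rangle_t=\int_0^t\gamma(\zeta_s)\,\mathcal E_{\zeta_s}\big[(\langle\varphi,\mathcal Z\rangle-\varphi(\zeta_s))^2\big]\, ds$. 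Finally $m$ and $J$ are orthogonal: $J$ has no continuous martingale part, and the jumps of $m$ (coming from jumps of the particle motions) and of $J$ (at branching times) never coincide, so $[m,J]\equiv0$ and $\langle m,J\rangle\equiv0$. Therefore $M=m+\varphi(x)+J$ is a local martingale and $\langle M\rangle_t=\langle m\rangle_t+\langle J\rangle_t=\int_0^t f(\zeta_s)\, ds$, with $f$ as in \eqref{eq:f}.

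\emph{Main obstacle.} The delicate part is making the ``concatenation along the exploration'' rigorous — checking that stitching the motion martingales of successive particles at the random times $\tau_w$ yields a genuine local martingale for the specific filtration $\mathcal F_t$, that the $\tau_w$ are well-behaved stopping times, and that the localisation by $R_n$ makes the pieces integrable — together with the case-by-case bookkeeping needed to prove $\Delta M_t=\langle\varphi,\mathcal Z_v\rangle-\varphi(\zeta_{t-})$ uniformly (a childless death, a death completing a subtree, and a death completing an entire tree must all be handled, and it is precisely the $-(I_t-1)\varphi(x)$ term that makes the last case work). The remaining technical point is the killing mechanism of $(\xi,\mathbf P)$: guaranteeing that the stopped motion martingales stay martingales and that killing events contribute nothing to the quadratic variation is exactly where \ref{a:CC}, \ref{a:HK} and \eqref{eq:stoppedCexp} enter.
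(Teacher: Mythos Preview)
Your approach is correct and structurally the same as the paper's: both decompose $M$ into $m$ plus a compensated branching-jump part, localise at the end-of-particle times $\tau_{v^{(n)}}$, and exploit the eigenfunction property of $\varphi$. The execution differs in two places. For the martingale property of $J=M-m-\varphi(x)$, you read off the generator identity $L\varphi+\gamma(m[\varphi]-\varphi)=0$ and compensate the branching jumps directly, whereas the paper runs a second particle-by-particle induction with an explicit integral calculation (their $\tilde C_{s,t}$); both amount to the same use of $\psi_t[\varphi]=\varphi$, but yours is more transparent. For the quadratic variation, you make the orthogonality $\langle m,J\rangle=0$ explicit (no common jumps, $J$ of finite variation plus pure jumps) before summing $\langle m\rangle+\langle J\rangle$; the paper instead computes $\langle M-m\rangle$ as the compensator of a jump-square process and then adds $\langle m\rangle$, leaving the orthogonality implicit. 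Incidentally, your constant $+\varphi(x)$ in the decomposition is in fact needed --- the paper's final displayed simplification of $M_t-m_t$ drops it, which is harmless for the martingale and quadratic-variation statements but is a slip.
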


\begin{proof}
	We start with the claims concerning $(m_t)_t$. 
	Let $v^{(n)}$ be the $n$-th particle in the lexicographical ordering of the i.i.d. sequence of trees and let $T_n=\tau_{v^{(n)}}$ be the time that depth-first exploration finishes discovering the trajectory of $v^{(n)}$, {setting $T_0:=0$.} We will show by induction that $m_{t\wedge T_n}$ is a martingale for all $n\in \mathbb{N}$. To this end, assume that $m_{t\wedge T_{n-1}}$ is a martingale. {(Note that $m_{t\wedge T_0}\equiv m_0$ is constant and so clearly a martingale).} Then, for $t>s$
\begin{equation}\label{eq:cond_exp}
		 \mathbb{E}_{\delta_x}^\star[m_{t\wedge T_n}|\mathcal{F}_s] = m_{s\wedge T_{n-1}}+\mathbb{E}_{\delta_x}^\star[m_{t\wedge T_n}-m_{t\wedge T_{n-1}}|\mathcal{F}_s].
\end{equation}
	We consider three cases: $s \ge T_n$, $T_{n-1} \le s < T_n$ and $s < T_{n-1}$. In the first case, the right-hand side of \eqref{eq:cond_exp} is equal to $m_{T_{n-1}}+(m_{T_n}-m_{T_{n-1}})=m_{T_n}=m_{s\wedge T_n}$. On the event $s\in [T_{n-1},T_n)$, it follows from \eqref{eq:stoppedCexp} that $\mathbb{E}_{\delta_x}^\star[m_{t\wedge T_n}-m_{t\wedge T_{n-1}}|\mathcal{F}_s]=m_s-m_{T_{n-1}}$ {Indeed, conditionally on $\mathcal{F}_s$, $(\zeta_t)_{t\in [s,T_{n}]}$ evolves as the single particle motion under law $\bfP$, and is stopped either at the first time it leaves $E$ (that is, is sent to $\dagger$) or the first branching event along its trajectory. The latter, conditionally on the motion, occurs at rate $\gamma(\zeta_\cdot)$. Moreover on the interval $[s,T_n]$, $m_t-m_s=\varphi(\zeta_t)-\varphi(\zeta_s)-\int_0^t (L\varphi)(\zeta_s) dx$. In other words the conditional law of $m_{(r+s)\wedge T_n}-m_s$ given $\mathcal{F}_s$ is that of $C_{r\wedge \sigma}$ under $\bfP$ as in \eqref{eq:stoppedCexp} (where we also used that $C$ is constant as soon as the particle reaches $\dagger$). H}ence, the right-hand side of \eqref{eq:cond_exp} is equal to $m_s=m_{s\wedge T_n}$. Finally, if $s<T_{n-1}$, thanks again to \eqref{eq:stoppedCexp}, we have
	\begin{align*}
		\mathbb{E}_{\delta_x}^\star[m_{t\wedge T_n}-m_{t\wedge T_{n-1}}|\mathcal{F}_s]  = \mathbb{E}_{\delta_x}^\star[\mathbb{E}_{\delta_x}^\star [m_{t\wedge T_n}-m_{t\wedge T_{n-1}}|\mathcal{F}_{T_{n-1}}]|\mathcal{F}_s] 
		 = 0
		\end{align*} 
	and so the right-hand side of \eqref{eq:cond_exp} is equal to $m_{s\wedge T_{n-1}}=m_s=m_{s\wedge T_n}$. Thus, $m_{t\wedge T_n}$ is a martingale with respect to $\mathcal{F}_t$ and hence $m_t$ is a local martingale for $\mathcal{F}_t$. The same reasoning together with \eqref{eq:Gamma} yields that 
	\begin{equation}
		\label{eq:mpqv}
		 \langle m \rangle_t = \int_0^t \Gamma (\varphi)(\zeta_s) \, ds
		 \end{equation}
		as desired.

	Now we write
	\begin{align*}
		M_t & = m_t +\int_0^t (L\varphi)(\zeta_s) ds + \sum_{w\in E_t\cup \{v_t\}} \varphi(X_{w}(b_w))-\sum_{w\in E_t} \varphi(X_{w}(d_w)) \\
		& \phantom{= m_t +\int_0^t (L\varphi)(\zeta_s) ds + Xxxxxxxxx} +\sum_{w\in D_t} \varphi(X_{w}(b_w))-\sum_{w\in E_t \cap \mathbb{N}} \varphi(X_{w}(b_w)) \\
		& = m_t +\int_0^t (L\varphi)(\zeta_s) ds  +  \sum_{w\in E_t\cup D_t\cup \{v_t\}\setminus \mathbb{N}} \varphi(X_{w}(b_w))- \sum_{w\in E_t} \varphi(X_{w}(d_w)) \\
	& =	m_t +\int_0^t (L\varphi)(\zeta_s) ds  +  \sum_{w\in E_t} \bigg(\bigg(\sum_{v: v=wi} \varphi(X_{v}(b_v))\bigg)-\varphi(X_{w}(d_w))\bigg) .
		\end{align*}
Since $m_t$ is a martingale, to show that $M_t$ is a martingale, it remains to justify that $M_t-m_t$ is a martingale. Again we will show this by induction. Define $(T_n, n\ge 0)$ as before, and let $t>s>0$. Then 
\begin{multline*}
\mathbb{E}_{\delta_x}^{\star}[M_{t\wedge T_n}-m_{t\wedge T_n}\mid \mathcal{F}_s]=M_{s\wedge T_{n-1}}-m_{s\wedge T_{n-1}} \\+ \mathbb{E}\bigg[\int_{t\wedge T_{n-1}}^{t\wedge T_n} (L\varphi)(\zeta_r) dr + \sum_{w\in E_{t\wedge T_{n}}\setminus E_{t\wedge T_{n-1}}}\bigg(\bigg(\sum_{v:v=wi} \varphi(X_v(b_v))\bigg)-\varphi(X_w(d_w))\bigg) \, \bigg| \, \mathcal{F}_s \bigg]. 
\end{multline*}
If $s\ge T_{n}$ the right hand side is clearly equal to $M_{T_{n}}-m_{T_n}=M_{s\wedge T_n}-m_{s\wedge T_n}$. If $s\in [T_{n-1},T_n)$ then the right hand side is 
\begin{align*}
\phantom{=} &  M_{s}-m_{s}+\mathbb{E}\bigg[\int_{ s}^{t\wedge T_n} (L\varphi)(\zeta_r) dr + \sum_{w\in E_{t\wedge T_{n}}\setminus E_{s}} \bigg(\bigg(\sum_{v:v=wi} \varphi(X_v(b_v))\bigg)-\varphi(X_w(d_w))\bigg) \, \bigg| \, \mathcal{F}_s \bigg] \\
= & M_s-m_s + \mathbb{E}[\tilde{C}_{s,t}\mid \mathcal{F}_s]+\mathbb{E}\bigg[e^{-\int_s^t \gamma(\zeta_r) dr}\varphi(\zeta_t)+\int_s^t \gamma(\zeta_r) e^{-\int_s^r \gamma(\zeta_u) du} m[\varphi](\zeta_r) dr \, \bigg| \, \mathcal{F}_s\bigg]
\end{align*}
where 
\[\tilde{C}_{s,t}=e^{-\int_s^t \gamma(\zeta_r) dr}\bigg(\int_s^t L\varphi(\zeta_r) dr - \varphi(\zeta_t)\bigg)+\int_s^t \gamma(\zeta_r) e^{-\int_s^r \gamma(\zeta_u) du} \bigg(\int_s^r L\varphi(\zeta_u) du - \varphi(\zeta_r)\bigg) dr. \]
Using \eqref{eq:stoppedCexp} together with the fact that $\varphi$ is an eigenfunction for the branching process we conclude that (for $s\in [T_{n-1},T_n)$)
\[
\mathbb{E}_{\delta_x}^{\star}[M_{t\wedge T_n}-m_{t\wedge T_n}\mid \mathcal{F}_s]=M_s-m_s -\varphi(\zeta_s)+\varphi(\zeta_s)=M_{s\wedge T_{n}}-m_{s\wedge T_n}.
\]
The case $s<T_{n-1}$ is argued similarly; we omit the details.

Finally, we prove the claim concerning the quadratic variation of $M_t$. Since the integral term above has finite variation, the predictable quadratic variation of $M_t-m_t$ is the predictable compensator of the process 
	\[ t\mapsto A_t:= \sum_{w\in E_t} \bigg(\bigg(\sum_{v: v=wi} \varphi(X_{v}(b_v))\bigg)-\varphi(X_{w}(d_w))\bigg)^2 .\]
	This is the u.c.p.~limit of 
	\begin{equation}\label{eq: pqv}
	\sum_{t_i<t} \mathbb{E}[A_{t_{i+1}\wedge t}-A_{t_i\wedge t} \mid \mathcal{F}_{t_i}]
	\end{equation}
	as the mesh size of the partition $\{0=t_0<t_1<\dots <t_n=t\}$ converges to $0$.
	For any interval $[u,u+\delta]$ with $t\ge u$, write $U=\inf\{r\ge u:E_r>E_u\}\wedge (u+\delta)$. Then 
	\begin{align*}
		\mathbb{E}[A_{u+\delta \wedge t} - A_u |\mathcal{F}_u]=\mathbb{E}\bigg[\bigg(\sum_{v=ui} \big(\varphi(X_v(b_v))\big)-\varphi(X_u(d_u))\bigg)^2\mathds{1}_{U<u+\delta} \, \bigg| \, \mathcal{F}_u\bigg]+\epsilon(u,\delta),
	\end{align*}
	where 
	\[ \mathbb{E}\bigg[\bigg(\sum_{v=ui} (\varphi(X_v(b_v)))-\varphi(X_u(d_u))\bigg)^2\mathds{1}_{U<u+\delta}\, \bigg| \,  \mathcal{F}_u\bigg] = f(\zeta_u)-\Gamma (\varphi)(\zeta_u)\]
	and 
	\[
	\epsilon(u,\delta) \le \|\varphi\|_\infty  \mathcal{E}[N^2]\,\mathbb{E}[|E_{u+\delta}\setminus E_U] \le C \delta^2,
	\]
	where $C<\infty$ depends only on the law of the branching process. Returning to \eqref{eq: pqv} gives that 
	\[\langle M \rangle_t = \int_0^t f(\zeta_s)-\Gamma (\varphi) (\zeta_s) \, ds,\] 
	which, combined with \eqref{eq:mpqv}, completes the identification of the quadratic variation.
\end{proof}

The following theorem and its corollary will be proved in the next section. 

\begin{thm}\label{thm:MCLT}
Under Assumption \ref{A:main} have
	\[
	\left( \frac{M_{n^2t}}{n}\right)_{t \ge 0} {\overset{d.}{\to}} \left(B_{\sigma^2(f) t}\right)_{t\ge 0}
	\]
	as $n\to \infty$, where 
	$B$ is a standard one-dimensional Brownian motion, and for $f$ defined in \eqref{eq:f}
	\begin{equation}\label{eq:sf}
	\sigma^2(f) := \frac{\langle \tilde\varphi, f \rangle}{\langle \tilde{\varphi}, 1 \rangle}.
    \end{equation}
\end{thm}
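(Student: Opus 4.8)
The plan is to apply the Functional Central Limit Theorem (Theorem~\ref{thm:CLT}) to the sequence of rescaled processes $Z_n(t) := M_{n^2 t}/n$. By Lemma~\ref{lem:mgs}, each $M$ is a local martingale started from $M_0 = \varphi(x)$ under $\mathbb{P}^\star_{\delta_x}$, so $Z_n(t) - Z_n(0) = (M_{n^2 t} - \varphi(x))/n$ is a local martingale, and $Z_n(0) = \varphi(x)/n \to 0$, giving condition (a). For condition (b), the jumps of $M$ are controlled: at a branching event of a spine particle at position $y$, the jump of $M$ is $\sum_{i=1}^N \varphi(x_i) - \varphi(y)$, which is bounded in absolute value by $(1+N)\|\varphi\|_\infty$; since $x \mapsto \mathcal{E}_x[N^2]$ is bounded by \ref{a:VAR}, and the number of branch events of the explored spine up to time $n^2 t$ grows only linearly, one shows $\mathbb{E}[\sup_{[0,T]}|\Delta Z_n(t)|^2] = \mathbb{E}[\sup|\Delta M_{n^2 \cdot}|^2]/n^2 = O(1/n^2) \to 0$ — here one must be slightly careful, bounding the supremum of the squared jumps by a sum and using the second-moment assumption together with the linear growth of the explored mass. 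Condition (c), continuity of $\langle Z_n\rangle_t = \langle M\rangle_{n^2 t}/n^2 = \int_0^{n^2 t} f(\zeta_s)\,ds / n^2$, is immediate from the integral form in \eqref{eq:f}.

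The crux is condition (d): we must show $\langle Z_n\rangle_t = n^{-2}\int_0^{n^2 t} f(\zeta_s)\,ds \overset{d.}{\to} \sigma^2(f)\, t$ for each fixed $t$, where $\sigma^2(f) = \langle \tilde\varphi, f\rangle / \langle\tilde\varphi, 1\rangle$. The idea is an ergodic-type law of large numbers: as the depth-first exploration runs for a long time, the position $\zeta_s$ of the explored particle spends, asymptotically, a fraction of time near each region of $E$ proportional to the stationary measure of the spine, appropriately weighted. Concretely, I would substitute $r = s/n^2$ to write $\langle Z_n\rangle_t = \int_0^t f(\zeta_{n^2 r})\,dr$ and argue that $f(\zeta_{n^2 r})$ averages out. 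The natural route is to relate the occupation measure of $(\zeta_s)$ under the depth-first exploration to the occupation measure of the spine process $(Y, \tilde{\mathbf P}^\varphi)$: heuristically, the particle being explored at a typical large time is distributed like the spine, whose stationary law is $\varphi(x)\tilde\varphi(dx)$ (Proposition~\ref{prop:spine}), but the exploration spends time at a vertex proportional to its lifetime, which under the $\varphi$-transform introduces a reweighting. Tracking this carefully, the time-average of $f(\zeta_s)$ converges to $\langle \tilde\varphi, f\rangle / \langle \tilde\varphi, 1\rangle$ rather than to $\langle \varphi\tilde\varphi, f\rangle$ (the extra $\langle\tilde\varphi, 1\rangle^{-1}$ and the disappearance of $\varphi$ reflect the length-biasing of the exploration versus the spine's own clock). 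One makes this rigorous by a second-moment (variance) estimate: show $\mathbb{E}[\langle Z_n\rangle_t] \to \sigma^2(f) t$ using Lemma~\ref{lem:ergodic} (or Theorem~\ref{thm:moments} applied to the occupation functionals $\eta_t^{(1)}$) after expressing the expected occupation of $(\zeta_s)$ via a many-to-one formula over the explored part of the tree, and then show the variance of $\langle Z_n\rangle_t$ vanishes, using decorrelation of $f(\zeta_s)$ and $f(\zeta_{s'})$ for $|s - s'|$ large, which follows from the exponential ergodicity built into \ref{a:PF}.

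Granting (a)--(d), Theorem~\ref{thm:CLT} yields that $(Z_n(t))_{t \in [0,T]}$ converges in law under the Skorokhod topology to a centred Gaussian process with independent increments and deterministic variance function $v(t) = \sigma^2(f)\, t$ — that is, to $(B_{\sigma^2(f) t})_{t\ge 0}$ for a standard Brownian motion $B$, which is exactly the claim (with $T$ arbitrary). The main obstacle is step (d), and within it the precise identification of the constant $\sigma^2(f)$: one has to account correctly for the fact that the depth-first exploration traverses each branch at unit speed, so the occupation time it accrues at a vertex equals that vertex's lifetime, and reconcile this with the spine's stationary distribution to land on $\langle\tilde\varphi, f\rangle/\langle\tilde\varphi,1\rangle$; the accompanying decorrelation estimate needed to upgrade convergence in mean to convergence in probability is the main technical input, and it is where assumption \ref{a:PF} (the spectral gap) does the real work.
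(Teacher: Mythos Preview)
Your overall strategy — verifying conditions (a)--(d) of Theorem~\ref{thm:CLT} — is correct and matches the paper. Conditions (a) and (c) are handled just as in the paper. There are two places where your argument has genuine gaps.

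\medskip

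\textbf{Condition (b).} Your claim that $\mathbb{E}[\sup_{[0,T]}|\Delta M_{n^2\cdot}|^2]=O(1)$ is false: the jumps are bounded by $(1+N)\|\varphi\|_\infty$ with $N$ having only a second moment, so the supremum over $O(n^2)$ jumps is not uniformly bounded. Your fallback of ``bounding the supremum by the sum'' yields only $\mathbb{E}[\sup]\le \mathbb{E}[\sum] = O(n^2)$, hence $O(1)$ after dividing by $n^2$, not $o(1)$. The paper closes this via a truncation argument (Proposition~\ref{prop:prove_b}): write $\max_i X_i^2 \le \varepsilon^2 R + \sum_i X_i^2 \mathbf 1_{\{|X_i|>\varepsilon\sqrt R\}}$, dominate the number of jumps by a Poisson$(\|\gamma\|_\infty R)$ variable, and use dominated convergence to kill the tail term as $R\to\infty$; then let $\varepsilon\downarrow 0$.

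\medskip

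\textbf{Condition (d).} This is where your plan diverges most from the paper, and your sketch does not quite work. Two issues. First, the ``spine'' heuristic is misleading: the depth-first exploration $(\zeta_s)$ visits \emph{every} particle in each tree, not a distinguished spine, so relating its occupation measure to the stationary law of $(Y,\tilde{\mathbf P}^\varphi)$ is not the right picture. Second, your mean-plus-variance route runs into the obstacle that each critical tree has \emph{infinite} expected total length (since $\psi_h[1](x)\to \varphi(x)\langle\tilde\varphi,1\rangle>0$), so neither $\mathbb{E}[\int_0^{n^2 t} f(\zeta_s)\,ds]$ nor the variance admits a clean per-tree decomposition. Moreover, $(\zeta_s)$ is not Markov, and the spectral gap in \ref{a:PF} concerns $\psi_t$, not correlations of $f(\zeta_s)$; the claimed decorrelation is not available off the shelf.

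The paper's route (Proposition~\ref{P:QV}) is instead to write
\[
\int_0^t f(\zeta_s)\,ds = \sum_{i=1}^{I_t}\int_0^\infty \langle f, X_h^i\rangle\,dh \;-\; (\text{corrections from } D_t),
\]
compare this with $\sigma^2(f)\, t$ written the same way with $f$ replaced by $\sigma^2(f)\cdot 1$, and observe that the discrepancy is controlled by integrals of
\[
\Big|\tfrac{\langle X_h^i, f\rangle}{\langle X_h^i,1\rangle}-\tfrac{\langle\tilde\varphi,f\rangle}{\langle\tilde\varphi,1\rangle}\Big|\cdot \langle X_h^i,1\rangle.
\]
The Yaglom limit (Theorem~\ref{thm:Yaglom}, via Corollary~\ref{cor:joint_Yaglom}) says that conditionally on survival, $\langle X_h,f\rangle/\langle X_h,1\rangle \to \sigma^2(f)$, which makes the integrand small for large $h$. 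The remaining work is to control the number of trees $I_t$ (Lemma~\ref{C:tailsIt}), the overshoot $\tau_t$ (Lemma~\ref{C:tailstaut}), and the mass of discovered-but-unexplored particles (Lemma~\ref{C:tailsJt}), all via the tail bound $\mathbb{P}_{\delta_x}(L\ge t)\lesssim t^{-1/2}$ (Lemma~\ref{lem:length}). This is where the actual probabilistic content lies, and it is not captured by a spectral-gap decorrelation argument.
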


\begin{cor}\label{cor:mgalefcltexcursion}
Under Assumption \ref{A:main}, let $(\hat{M}_t)_{t \ge 0}$ be the martingale $(M_t)_{t \ge 0}$ but restricted to the exploration of a single critical tree. Then, for any $c>0$
{
\[ \mathbb{P}_{\delta_x}(\sup \hat{M} >cn)\sim \frac{\varphi(x)}{cn}\]
as $n\to \infty$, and }
\[
\mathcal{L}\left(\frac{\hat{M}_{n^2\cdot}}{n}\, | \, \sup\hat{M}>cn\right) \to \mathcal{L}(\ee\, | \, \sup(\ee)>c), \quad n \to \infty,
\]
where $\ee$ is a Brownian excursion run at speed $\sigma^2(f)$.

\end{cor}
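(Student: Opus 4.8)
The plan is to obtain the corollary from the unconditioned limit theorem (Theorem~\ref{thm:MCLT}) by an excursion-theoretic argument. Run over the i.i.d.\ sequence $\mathbb{P}^{\star}_{\delta_x}$, the martingale $M$ splits into one piece per tree: during the exploration of the $i$-th tree the last sum in the definition of $M_t$ equals the deterministic quantity $(i-1)\varphi(x)$, so $M_t+(i-1)\varphi(x)=\varphi(\zeta_t)+\sum_{w\in D_t}\varphi(X_w(b_w))$ is a nonnegative path that starts at $\varphi(x)$, stays positive, and returns to $0$ exactly when the $i$-th tree is fully explored (using positivity of $\varphi$ and that the last leaf of a tree dies with no offspring), and $\hat M$ is precisely the first of these. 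Hence the running infimum of $M_t$ is $-(I_t-1)\varphi(x)$ up to an error $O(\varphi(x))$, so that, asymptotically, the excursions of $M$ above its running infimum are exactly the tree-excursions. Since $M_{n^2\cdot}/n\to B_{\sigma^2(f)\cdot}$ with vanishing jumps and the Skorokhod reflection map is continuous at continuous paths, this upgrades to joint convergence of $M_{n^2\cdot}/n$, its running infimum, the rescaled clock $I_{n^2\cdot}/n$, and the sequence of rescaled tree-excursions, towards the It\^o excursion decomposition of the reflected Brownian motion obtained from $B_{\sigma^2(f)\cdot}$ by subtracting its running infimum. In particular, a single unconditioned rescaled tree-excursion, weighted by $n$, converges vaguely to a constant multiple of the It\^o excursion measure $\mathbf n$ of that reflected process.

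Turning to the conditioning, Theorem~\ref{thm:survival} gives $n\,\mathbb{P}_{\delta_x}(N_n>0)\to c:=2\varphi(x)/\langle\mathcal{V}[\varphi],\tilde\varphi\rangle$, and since the trees are i.i.d.\ the first index $\kappa_n$ with $N_n>0$ is geometric with mean $\sim n/c$, the $\kappa_n$-th tree carrying exactly the law $\mathbb{P}_{\delta_x}(\cdot\mid N_n>0)$. Writing $\mathbb{E}_{\delta_x}[F(\hat M_{n^2\cdot}/n)\mid N_n>0]=\mathbb{E}_{\delta_x}[F(\hat M_{n^2\cdot}/n)\,\mathbf{1}_{\{N_n>0\}}]/\mathbb{P}_{\delta_x}(N_n>0)$ for bounded continuous $F$ on path space, and multiplying numerator and denominator by $n$, the claim reduces to showing $n\,\mathbb{E}_{\delta_x}[F(\hat M_{n^2\cdot}/n)\mathbf{1}_{\{N_n>0\}}]\to\mathbf n[F\,\mathbf{1}_{A}]$ with limiting conditioning event $A=\{\sup e>1\}$; then the denominator converges to $c=\mathbf n(\sup e>1)$ and the ratio to $\mathcal{L}(\ee\mid\sup\ee>1)$, independently of the normalisation of $\mathbf n$. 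Tightness of $\{\mathcal{L}(\hat M_{n^2\cdot}/n\mid N_n>0)\}_n$, and the fact that subsequential limits are supported on excursion paths with the local behaviour of a speed-$\sigma^2(f)$ Brownian excursion, are soft consequences of Theorem~\ref{thm:MCLT} together with the moment asymptotics of Theorem~\ref{thm:moments} and the ergodic lemma~\ref{lem:ergodic}, the latter two used to absorb the $\varphi$-weights appearing in $\hat M$; identifying $\mathbf n$ with the It\^o measure of a speed-$\sigma^2(f)$ reflected Brownian motion is then immediate from the predictable quadratic variation $\langle M\rangle_\cdot=\int_0^\cdot f(\zeta_s)\,\mathrm{d}s$ of Lemma~\ref{lem:mgs} and the definition of $\sigma^2(f)$ in \eqref{eq:sf}.

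The crux, and where I expect the genuine work to lie, is identifying the conditioning event $A$ as $\{\sup e>1\}$: the event $\{N_n>0\}$ is defined through the branching clock rather than through $M$, so one must show it coincides, up to events of vanishing probability, with the functional $\{\sup(\hat M_{n^2\cdot}/n)>1\}$ — equivalently, that the multiplicative constant relating ``$M$ reaching level of order $n$'' to ``survival to branching-time $n$'' is exactly $1$ under the normalisation $\sigma^2(f)=\langle\tilde\varphi,f\rangle/\langle\tilde\varphi,1\rangle$. The natural route is a ratio/scaling argument: Theorem~\ref{thm:survival} yields $\mathbb{P}_{\delta_x}(N_{an}>0\mid N_n>0)\to 1/a$ for $a\ge1$, which matches $\mathbf n(\sup e>a)/\mathbf n(\sup e>1)=1/a=\mathbb{P}(\sup\ee>a\mid\sup\ee>1)$ for a speed-$\sigma^2(f)$ excursion $\ee$; feeding this into the vague convergence from the first step forces $A=\{\sup e>1\}$ (no additional tilting of the limiting excursion is possible since the conditioning constrains only its running supremum), while the moment estimates are invoked once more to rule out any loss of mass at the two endpoints of the excursion so that the limit is a genuine Brownian excursion conditioned on its height. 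Combining the three steps gives $\mathcal{L}(\hat M_{n^2\cdot}/n\mid N_n>0)\to\mathcal{L}(\ee\mid\sup\ee>1)$.
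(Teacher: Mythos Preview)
Your overall strategy---deducing the conditioned limit from Theorem~\ref{thm:MCLT} via excursion theory for $M$ reflected above its running infimum---is exactly the route taken in the references the paper cites (Le~Gall's Prop.~2.5.2 and \cite{ellen}, Prop.~6.13); the paper itself gives no further details. The decomposition of $M$ into nonnegative tree-by-tree pieces separated by drops of size $\varphi(x)$ at the running infimum, the continuity of the Skorokhod map at continuous limit points, and the passage to the It\^o excursion measure are all sound.

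The gap is in your third paragraph. You need $\{N_n>0\}$---which is a condition on the \emph{height} process, namely $\{\sup_t h_t\ge n\}$---to induce the condition $\{\sup e>1\}$ on the limiting $\hat M$-excursion. Your ``ratio/scaling'' argument does not do this. The relation $\mathbb{P}_{\delta_x}(N_{an}>0\mid N_n>0)\to 1/a$ is consistent with $A=\{\sup e>c\}$ for \emph{every} $c>0$, and indeed with other one-parameter families of events under the It\^o measure having $1/x$ tails (e.g.\ a suitable power of the lifetime). The assertion that ``no additional tilting is possible since the conditioning constrains only its running supremum'' is precisely the statement to be proved, not an input. Nor does matching $n\mathbb{P}_{\delta_x}(N_n>0)\to 2\varphi(x)/\langle\mathcal V[\varphi],\tilde\varphi\rangle$ against $\mathbf n(\sup>c)$ pin down $c$ unless you have separately identified the normalisation constant in the vague limit---which you have not, and which itself amounts to the same missing computation.

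In the cited references this step is handled differently: Le~Gall works directly with the height process $H$, for which $\{N_n>0\}=\{\sup H\ge n\}$ is a continuous functional of the rescaled path, so the conditioning passes to the limit by continuous mapping; the statement for the Lukasiewicz path then follows from the \emph{joint} FCLT for $(S,H)$ (his Theorem~2.3.1). The analogous input in the present paper is the $\hat M$-to-$h$ comparison developed in Step~1 of Section~\ref{sec:mainproof} (Proposition~\ref{propn::etabad} and Lemma~\ref{lemma::goodcondtree}), which shows that $\hat M(u)/n$ is close to a deterministic constant for typical particles $u\in\mathcal N_n$. In other words, the corollary is not a pure excursion-theoretic consequence of Theorem~\ref{thm:MCLT}; translating the conditioning event genuinely requires the height--martingale link, and you should either import that ingredient explicitly or supply an independent argument for it, which the scaling heuristic does not provide.
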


\subsection{Proofs of Theorem \ref{thm:MCLT} and Corollary \ref{cor:mgalefcltexcursion}} \label{sec:fclt}

We assume that Assumption \ref{A:main} holds throughout this section. 
\begin{proof}[Proof of Theorem \ref{thm:MCLT}] To prove this result, we will show that the conditions of Theorem \ref{thm:CLT} are satisfied. Thanks to Lemma \ref{lem:mgs} and the fact that the i.i.d. trees in the exploration process all start at the same position, it is clear that conditions $(a)$ and $(c)$ are satisfied. The following result shows that $(b)$ holds. 

\begin{prop}\label{prop:prove_b}
For any $R < \infty$, 
\[
    \lim_{N\to \infty} \frac{\mathbb{E}[\sup_{t\in [0,N^2R]}|\Delta M_{t}|^2]}{N^2} = 0.
\] 
\end{prop}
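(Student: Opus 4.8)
The statement asks us to show that the maximum jump of the depth-first martingale $M$ over a time interval of length $N^2R$ is $o(N)$ in $L^2$. Since $M_t = \varphi(\zeta_t) + \sum_{w \in D_t}\varphi(X_w(b_w)) - (I_t-1)\varphi(x)$, jumps of $M$ come from two sources: (i) when a particle is fully explored and its offspring appear in $D_t$, the discrepancy $\sum_i \varphi(X_{vi}(b_{vi})) - \varphi(X_v(d_v))$ is added; (ii) when a new root tree starts, contributing $\varphi(x) - (\text{jump in }(I_t-1)\varphi(x))$, which is in fact $0$ by the bookkeeping (the new root's position is $x$ and it enters $D_t$ while $I_t$ increments, so these cancel — actually one should double-check, but the root contribution is bounded by $\|\varphi\|_\infty$ regardless). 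In all cases a single jump is bounded by $(N_v + 1)\|\varphi\|_\infty$ where $N_v$ is the offspring number of the particle being explored.

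**The plan.** First I would write $\sup_{t \le N^2 R}|\Delta M_t|^2 \le \|\varphi\|_\infty^2 \sup_{v \text{ explored by time } N^2R}(N_v+1)^2$. Since we only need an $L^1$ bound on this supremum (divided by $N^2$), I would bound it crudely by a \emph{sum}: $\sup_v (N_v+1)^2 \le \sum_{v \in E_{N^2R}}(N_v+1)^2$. Now $\mathbb{E}^\star_x\big[\sum_{v \in E_{N^2R}}(N_v+1)^2\big]$ can be controlled: the number of particles explored by time $t$ is at most the total number of particles that have \emph{ever} been born in the trees visited, and using the many-to-one / Kolmogorov machinery (Theorems \ref{thm:survival}, \ref{thm:moments}) the expected total progeny of a single critical tree is finite, while the expected time to explore a tree is its length $L = \int_0^\infty N_s\,ds$, whose expectation is $\eta_\infty^{(1)}[\mathbf 1](x)/\ldots$ — actually $\mathbb{E}_{\delta_x}[L] = \int_0^\infty \psi_s[\mathbf 1](x)ds$, which is finite by \ref{a:PF} (exponential ergodicity forces $\psi_s[\mathbf 1](x) = O(e^{-\varepsilon s})\cdot 0 + \ldots$; more carefully $\langle \psi_s[\mathbf 1],\tilde\varphi\rangle = \langle \mathbf 1,\tilde\varphi\rangle$ is constant, so $L$ does \emph{not} have finite expectation in general — instead one uses that on $\{N_s = 0 \text{ eventually}\}$, i.e. \ref{a:EXT}, $L < \infty$ a.s.). This suggests the cleaner route: condition on the trees, use that each tree has finite (random, a.s.-finite by \ref{a:EXT}) progeny, and bound the number of trees explored by time $N^2R$.

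**Alternative cleaner route.** Rather than fight with $L^2$ of the supremum directly, I would prove the stronger statement that $N^{-2}\mathbb{E}^\star_x[\sum_{v\in E_{N^2R}}(N_v+1)^2\mathbf 1_{\{N_v+1 > \epsilon N\}}] \to 0$ for each fixed $\epsilon$, and handle the complementary event by noting jumps there are at most $\epsilon N\|\varphi\|_\infty$. Equivalently: the expected number of particles explored by time $N^2R$ grows like $O(N^2)$ (each tree contributes $O(1)$ expected progeny and $O(1)$ expected exploration... no — expected tree length is the issue again). The robust fix is to use that $N_v^2$ has bounded conditional expectation $\mathcal E_x[N^2] \le \|\cdot\|$ (assumption \ref{a:VAR}) \emph{and} to truncate: $\mathbb{E}[\sup_v(N_v+1)^2] \le \epsilon^2 N^2 + \mathbb{E}[\sum_v (N_v+1)^2 \mathbf 1_{N_v > \epsilon N}]$, and the latter sum has expectation $\le \mathbb{E}[\#\{\text{explored particles}\}] \cdot \sup_x \mathcal E_x[(N+1)^2\mathbf 1_{N+1>\epsilon N}]$; if $\#\{\text{explored by }N^2R\} = O(N^2)$ in expectation (which follows because each of the $O(N^2)$ partially-explored trees — there are about $N^2R / \mathbb{E}[\text{one tree length}]$ of them, and a careful argument via \ref{a:EXT}\,+\,Theorem~\ref{thm:survival} shows the right order is $N^2$, matching the $n^2$ scaling throughout the paper) then we get $\epsilon^2 N^2 + o(N^2)$, divide by $N^2$, let $\epsilon \to 0$.

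**Main obstacle.** The genuine difficulty is \textbf{controlling the number (or rather total progeny) of trees explored by time $N^2R$} — one must show this is $O(N^2)$ in $L^1$, which requires knowing that the expected length of a single critical tree's exploration is finite \emph{and} comparable to a constant. This is where \ref{a:EXT} (a.s. extinction) together with the Kolmogorov estimate $\mathbb{P}_{\delta_x}(N_t > 0) \sim c/t$ from Theorem~\ref{thm:survival} enters: it gives $\mathbb{E}_{\delta_x}[L \wedge T] = \int_0^T \mathbb{E}[N_s]ds$ — but $\mathbb{E}[N_s] = \psi_s[\mathbf 1](x)$ which by \ref{a:PF} is $O(1)$, so $\mathbb{E}[L\wedge T] = O(T)$, and hence a tree of length $> N^2$ has probability $\to 0$, while the number of trees of length $\le N^2$ fitting in $[0, N^2R]$ is $O(R)$ only if lengths are $\Omega(1)$ — they are not, lengths can be tiny. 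The correct bookkeeping is: $\#\{$particles explored by time $N^2R\} = \#\{$particles in trees $X^1,\dots,X^{I_{N^2R}}\}$, and $\mathbb{E}[\#\text{particles in one tree}] = \mathbb{E}_{\delta_x}[N_\infty^{\text{tot}}]$; combined with $\mathbb{E}[I_{N^2R}]$, which is the expected number of trees started, $\le 1 + N^2R/\mathbb{E}[\text{length} \mid \text{length}>0]$... This subtlety — that although individual tree lengths have no lower bound, the number of trees started in $[0,t]$ still has expectation $O(t)$ because $\mathbb{E}[\text{total progeny of one tree}]$ is finite and exploring each particle takes expected time $\mathbb{E}[d_v - b_v] = \Theta(1)$ (lifetime is exponential with rate $\gamma(x) + (\text{killing})$, bounded below away from zero if $\gamma$ bounded... here I'd invoke \ref{a:VAR}: $\gamma$ bounded \emph{above}, not below, so even lifetimes can be long) — is where the proof must do real work, and I expect the authors to handle it via the moment bounds of Theorem~\ref{thm:moments} applied to $\eta^{(1)}$, which controls expected occupation time $\mathbb{E}_{\delta_x}[\int_0^t \langle \mathbf 1, X_s\rangle ds]$ and hence expected exploration progress.
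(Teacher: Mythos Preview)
Your truncation strategy --- bounding $\sup_v (N_v+1)^2 \le \varepsilon^2 N^2 + \sum_v (N_v+1)^2 \mathbf{1}_{\{N_v+1>\varepsilon N\}}$ and then controlling the expected number of particles explored by time $N^2R$ --- is exactly the structure of the paper's argument. However, you correctly diagnose the obstacle (showing $\mathbb{E}[\#\{\text{explored by time }N^2R\}]=O(N^2)$) and then fail to resolve it. All of your attempted routes break down for the reason you partly notice: for a critical tree, both the expected total progeny and the expected length $\mathbb{E}_{\delta_x}[L]=\int_0^\infty \psi_s[\mathbf 1](x)\,ds$ are infinite (under \ref{a:PF} $\psi_s[\mathbf 1](x)\to \langle \mathbf 1,\tilde\varphi\rangle\varphi(x)>0$), so arguments based on ``number of trees $\times$ progeny per tree'' or ``$N^2R/\mathbb{E}[L]$'' do not work. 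Invoking the occupation moments of Theorem~\ref{thm:moments} does not help either, since those are stated in natural time, not exploration time.

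The missing observation is much simpler and is the content of the paper's proof: in the depth-first exploration, exactly one particle is being followed at any instant, and that particle's branching clock rings at rate $\gamma(\zeta_t)\le \|\gamma\|_\infty$. Hence the number of branching events (equivalently, the number of particles fully explored) on $[0,N^2R]$ is stochastically dominated by a Poisson random variable with mean $\|\gamma\|_\infty N^2R$. This gives the $O(N^2)$ bound you need in one line, and combined with $\sup_x\mathcal{E}_x[N^2]<\infty$ from \ref{a:VAR} and your truncation, the proof is complete. The paper packages this as an abstract lemma about $\sup_{i\le N_R}X_i^2$ for $N_R\sim\mathrm{Poisson}(\lambda R)$ and then applies it with $R$ replaced by $N^2R$.
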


\smallskip
	
Finally, the following result implies that condition (d) of Theorem \ref{thm:CLT} holds.

\smallskip

\begin{prop}\label{P:QV}
	Let $f:E\to[0,1]$ be measurable. Then 
	\[
	\frac{1}{t} \int_0^t f(\xi_s) ds \to \sigma^2(f)
	\]
	in probability as $t\to \infty$. 
\end{prop}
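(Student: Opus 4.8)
The plan is to reduce the claim to a law of large numbers for the \emph{ratio} of two sums of i.i.d.\ functionals of the individual trees, the crux being that the spectral gap in \ref{a:PF} makes the ``centred'' part of the occupation a strictly lower-order contribution than the tree lengths. (The process to control here is the depth-first exploration position $\zeta=(\zeta_s)_{s\ge 0}$ of Section~\ref{subsec:labels} under $\mathbb P^\star_{\delta_x}$, since condition~(d) of Theorem~\ref{thm:CLT} requires controlling $\langle M\rangle_{n^2 t}=\int_0^{n^2 t}f(\zeta_s)\,\d s$.) Under $\mathbb P^\star_{\delta_x}$ write $(X^i)_{i\ge1}$ for the i.i.d.\ trees, $L^i:=\int_0^\infty N^i_r\,\d r$ for their (a.s.\ finite, by \ref{a:EXT}) lengths, $T_k:=\sum_{i=1}^kL^i$, and $G^i:=\int_0^\infty\langle f,X^i_r\rangle\,\d r$. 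Since in the depth-first exploration of the $i$-th tree each particle $v$ is visited exactly on $[\sigma_v,\tau_v)$, an interval of length $d_v-b_v$ on which $\zeta_s=X_v(h_s)$ with $h_s$ sweeping $[b_v,d_v)$, a change of variables and summation over particles give the occupation identity $\int_0^{T_k}f(\zeta_s)\,\d s=\sum_{i=1}^kG^i$ (and $T_k=\int_0^{T_k}1\,\d s$). Put $c:=\sigma^2(f)=\langle\tilde\varphi,f\rangle$ (recall $\langle\tilde\varphi,1\rangle=1$) and $g:=f-c\mathbf 1$, so that $\langle\tilde\varphi,g\rangle=0$ and $\|g\|\le1$; decomposing $G^i=cL^i+H^i$ with $H^i:=\int_0^\infty\langle g,X^i_r\rangle\,\d r$, the pairs $(L^i,H^i)$ are i.i.d.\ and
\[
\frac1{T_k}\int_0^{T_k}f(\zeta_s)\,\d s \;=\; c+\frac{\sum_{i\le k}H^i}{\sum_{i\le k}L^i}.
\]
It then remains to show the fraction tends to $0$ in probability and to pass from the epochs $T_k$ to arbitrary $t$ by monotonicity.

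The heart of the matter is two tail estimates. \emph{Denominator.} Splitting according to whether the tree is extinct by time $\sqrt u$, Theorem~\ref{thm:survival} and Markov applied to $\mathbb E_{\delta_x}[\int_0^{\sqrt u}N_r\,\d r]=O(\sqrt u)$ (Theorem~\ref{thm:moments}, $\ell=1$) give $\mathbb P_{\delta_x}(L>u)=O(u^{-1/2})$, hence $\mathbb E_{\delta_x}[(L\wedge u)^2]=O(u^{3/2})$; conversely, with $Z_a:=\int_0^aN_r\,\d r\le L$ and $\mathbb E_{\delta_x}[Z_a]\asymp a$, $\mathbb E_{\delta_x}[Z_a^2]\asymp a^3$ (Theorem~\ref{thm:moments}, $\ell=1,2$), bounding $\mathbb E_{\delta_x}[Z_a\mathbf 1_{\{Z_a>u\}}]\le\mathbb E_{\delta_x}[Z_a^2]/u$ and taking $a=c_0\sqrt u$ for a small constant $c_0$ gives $\mathbb E_{\delta_x}[L\wedge u]\gtrsim\sqrt u$. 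Feeding these into Chebyshev's inequality for $\sum_{i\le k}(L^i\wedge\delta k^2)$ yields: for each $\eta>0$ there is $\delta>0$ with $\sup_k\mathbb P^\star_{\delta_x}(\sum_{i\le k}L^i<\delta k^2)<\eta$. \emph{Numerator.} Here the point is $\langle\tilde\varphi,g\rangle=0$, so \ref{a:PF} gives $\|\varphi^{-1}\psi_r[g]\|=O(\e^{-\varepsilon r})$; using the branching Markov property together with $\mathbb E_{\delta_x}[\langle g,X_r\rangle^2]\le\|g\|^2\psi_r^{(2)}[\mathbf 1](x)=O(r)$ and $\psi_r^{(2)}[\varphi](x)=O(r)$ (Theorem~\ref{thm:moments}, $\ell=2$), one obtains for $r'\le r$ that $|\mathbb E_{\delta_x}[\langle g,X_{r'}\rangle\langle g,X_r\rangle]|=|\mathbb E_{\delta_x}[\langle g,X_{r'}\rangle\langle\psi_{r-r'}[g],X_{r'}\rangle]|\le C\e^{-\varepsilon(r-r')}r'$, whence $\mathbb E_{\delta_x}[(\int_0^T\langle g,X_r\rangle\,\d r)^2]=O(T^2)$ — the exponential decay is exactly what replaces the naive $O(T^3)$ by $O(T^2)$. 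Truncating $H$ on the event that the tree is extinct by time $T$ and optimising over $T$ then gives $\mathbb P_{\delta_x}(|H|>v)\lesssim T^2/v^2+\mathbb P_{\delta_x}(N_T>0)\lesssim T^2/v^2+1/T\lesssim v^{-2/3}$ at $T\asymp v^{2/3}$. Since $2/3>1/2$, a truncation-plus-Markov argument gives $\sum_{i\le k}|H^i|=o_{\mathbb P}(k^2)$.

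Combining the two estimates, $(\sum_{i\le k}H^i)/(\sum_{i\le k}L^i)\to0$ in probability, so $\tfrac1{T_k}\int_0^{T_k}f(\zeta_s)\,\d s\to c$; and since $\mathbb P_{\delta_x}(L>\rho k^2)\to0$ for every $\rho>0$ while $\sum_{i<k}L^i$ stays of order $k^2$, also $T_k/T_{k-1}=1+L^k/T_{k-1}\to1$ in probability. For general $t$, set $k(t):=\min\{j:T_j>t\}$, which tends to $\infty$; by monotonicity of $t\mapsto\int_0^tf(\zeta_s)\,\d s$ and $T_{k(t)-1}\le t<T_{k(t)}$,
\[
\frac{\sum_{i<k(t)}G^i}{T_{k(t)}}\;\le\;\frac1t\int_0^tf(\zeta_s)\,\d s\;\le\;\frac{\sum_{i\le k(t)}G^i}{T_{k(t)-1}},
\]
and each side equals $\tfrac1{T_k}\int_0^{T_k}f(\zeta_s)\,\d s$ (at $k=k(t)-1$, resp.\ $k=k(t)$) multiplied by $T_{k-1}/T_k$ or $T_k/T_{k-1}$, so it converges to $c=\sigma^2(f)$ in probability; this completes the proof.

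I expect the main obstacle to be isolating the cancellation coming from $\langle\tilde\varphi,g\rangle=0$ — namely establishing $\mathbb E_{\delta_x}[(\int_0^T\langle g,X_r\rangle\,\d r)^2]=O(T^2)$, so that the centred occupation $H$ has a strictly lighter tail than the length $L$ — together with the matching two-sided control of $L$, especially the lower bound $\mathbb E_{\delta_x}[L\wedge u]\gtrsim\sqrt u$ that keeps $\sum_{i\le k}L^i$ of order $k^2$. Once these inputs are in place, the rest is routine tail arithmetic via Markov's and Chebyshev's inequalities.
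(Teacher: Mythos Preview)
Your overall strategy---decomposing into i.i.d.\ trees, centring via $g=f-c\mathbf 1$, and exploiting the spectral gap to get $\mathbb E_{\delta_x}\bigl[(\int_0^T\langle g,X_r\rangle\,\d r)^2\bigr]=O(T^2)$ and hence a tail $\mathbb P(|H|>v)\lesssim v^{-2/3}$ strictly lighter than $\mathbb P(L>u)\asymp u^{-1/2}$---is sound and genuinely different from the paper's proof. The paper instead writes $\int_0^t f(\zeta_s)\,\d s=\sum_{i\le I_t}\int_0^\infty\langle f,X^i_h\rangle\,\d h-\sum_{j\le J_t}\int_0^\infty\langle f,\tilde X^j_h\rangle\,\d h$ (subtracting the subtrees rooted at $D_t\cup\{v_t\}$) and controls both pieces via Yaglom-type ergodic estimates, without separating the tails of $H$ and $L$.

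However, your passage from the epochs $T_k$ to general $t$ has a real gap. You correctly show $T_k/T_{k-1}\to1$ and $\tfrac1{T_k}\int_0^{T_k}f(\zeta_s)\,\d s\to c$ in probability along \emph{deterministic} $k$, then apply these at the \emph{random} index $k=k(t)=I_t$. Convergence in probability is not preserved under random subsequences, and here $k(t)$ is exactly size-biased towards indices with large $L^{k(t)}$. Since $\mathbb E[L]=\infty$ with $\mathbb P(L>u)\asymp u^{-1/2}$ (indeed your own lower bound $\mathbb E[L\wedge u]\gtrsim\sqrt u$ forces $\limsup_u\sqrt u\,\mathbb P(L>u)>0$), Dynkin--Lamperti-type behaviour applies: $(t-T_{I_t-1})/t$ converges in law to a non-degenerate (generalised arcsine) distribution, \emph{not} to $0$. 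Consequently $T_{k(t)}/T_{k(t)-1}=1+L^{I_t}/T_{I_t-1}$ does not tend to $1$ in probability, and neither side of your sandwich converges to $c$. Equivalently, writing $\tfrac1t\int_0^t g(\zeta_s)\,\d s=\tfrac1t\sum_{i<I_t}H^i+\tfrac1t R_t$ with $R_t=\int_{T_{I_t-1}}^t g(\zeta_s)\,\d s$, your estimates do give $\tfrac1t\sum_{i<I_t}|H^i|\to0$ (bound $I_t\le M\sqrt t$ with high probability and use $\sum_{i\le M\sqrt t}|H^i|=o_{\mathbb P}(t)$), but the crude bound $|R_t|\le 2(t-T_{I_t-1})$ is genuinely of order $t$. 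Controlling $R_t$ requires analysing the partial exploration of the current tree---which is precisely what the paper's $D_t$ decomposition provides.
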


This completes the proof of Theorem \ref{thm:MCLT}, modulo proving the two propositions.
\end{proof}

The proof of Proposition \ref{prop:prove_b} is straightforward, so let us give this first. 

\begin{proof}[Proof of Proposition \ref{prop:prove_b}]
	Let \(X_1, X_2, \dots\) be a sequence of i.i.d. random variables, independent of $N_R \sim \mathrm{Poisson}(\lambda R)$\ and satisfying \(\mathbb{E}[X_1^2]<\infty\). For any \(y>0\) and \(n \ge 1\), we have
	\[
	\max_{1\le i \le n} X_i^2 \le y^2 + \sum_{i=1}^n X_i^2 \mathbf{1}_{\{|X_i|>y\}}.
	\]
	Applying this with \(y=\varepsilon\sqrt{R}\) and \(n=N_R\), and taking expectations gives
	\[
	\mathbb{E}\big[\sup_{1\le i \le N_R} X_i^2\big] \le \varepsilon^2 R +\mathbb{E}\!\left[\sum_{i=1}^{N_R} X_i^2 \mathbf{1}_{\{|X_i|>\varepsilon\sqrt{R}\}}\right].
	\]
	Since the \((X_i)_{i \ge 1}\) are i.i.d. and independent of \(N_R\), it follows that
	\[\mathbb{E}[\sup_{1\le i \le N_R} X_i^2]\le \varepsilon^2 R +\mathbb{E}[N_R]\,
	\mathbb{E}\!\left[X_1^2 \mathbf{1}_{\{|X_1|>\varepsilon\sqrt{R}\}}\right]
	= \varepsilon^2 R 
	+ \lambda R\, \mathbb{E}\!\left[X_1^2 \mathbf{1}_{\{|X_1|>\varepsilon\sqrt{R}\}}\right].
	\]
	Divide both sides by \(R\) yields
	\[
	\frac{\mathbb{E}[\sup_{1\le i \le N_R} X_i^2]}{R}\le
	\varepsilon^2 
	+ \lambda\, \mathbb{E}\!\left[X_1^2 \mathbf{1}_{\{|X_1|>\varepsilon\sqrt{R}\}}\right].
	\]
	Since \(\mathbb{E}[X_1^2]<\infty\), dominated convergence implies that
	\(\mathbb{E}[X_1^2 \mathbf{1}_{\{|X_1|>\varepsilon\sqrt{R}\}}]\to0\)
	as \(R\to\infty\).
	Hence
	\[
	\limsup_{R\to\infty}\frac{\mathbb{E}[\sup_{1\le i \le N_R} X_i^2]}{R} \le \varepsilon^2.
	\]
	Letting \(\varepsilon \downarrow 0\) yields
	\[
	\lim_{R\to\infty}\frac{\mathbb{E}[\sup_{1\le i \le N_R} X_i^2]}{R}=0.
	\]
	To apply this to our context, note that the second moments of $\Delta M_t$ are finite thanks to boundedness of $\varphi$ and $\gamma$, and the fact that the offspring distribution has finite variance. The result then follows by noting that the number of jumps of the process $M_t$ for $t \in [0, R]$ can be dominated by a Poisson random variable with rate $\norm{\gamma}_\infty R$. 
\end{proof}

Before proving Proposition \ref{P:QV}, we will need some preliminary lemmas. 

\begin{lemma}\label{lem:length}
	There exists $C\in(0,\infty)$ such that 
	\[ \mathbb{P}_{\delta_x}(L\ge t)\le \frac{C}{\sqrt{t}}\] 
	for all $t\ge 1$ and $x\in E$, where $L$ is defined (in Section \ref{subsec:labels}) as the length of the tree.
\end{lemma}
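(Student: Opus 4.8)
The plan is to control $L=\int_0^\infty N_s\,ds$ by splitting according to the extinction time $\zeta=\inf\{t:N_t=0\}$, at a level $s$ that will eventually be taken of order $\sqrt t$. For any $s,t\ge 1$,
\[
\mathbb P_{\delta_x}(L\ge t)\;\le\;\mathbb P_{\delta_x}(\zeta>s)\;+\;\mathbb P_{\delta_x}\!\big(L\ge t,\ \zeta\le s\big),
\]
and since $N_u=0$ for $u\ge\zeta$, on $\{\zeta\le s\}$ we have $L=\int_0^\zeta N_u\,du\le\int_0^s N_u\,du$. Hence by Markov's inequality the second term is at most $t^{-1}\,\mathbb E_{\delta_x}\big[\int_0^s N_u\,du\big]=t^{-1}\eta_s^{(1)}[\mathbf 1](x)$.

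For this occupation term I would invoke Theorem \ref{thm:moments}, which applies with $k=2$ since \ref{a:VAR} gives $\sup_{x\in E}\mathcal E_x[N^2]<\infty$. Taking $\ell=1$ there yields $\sup_{s\ge 0}\Theta_s^{(1)}<\infty$, and since $\varphi$ is bounded,
\[
\eta_s^{(1)}[\mathbf 1](x)\;\le\; s\,\varphi(x)\big(\langle \mathbf 1,\tilde\varphi\rangle+\textstyle\sup_{s\ge 0}\Theta_s^{(1)}\big)\;\le\; C_1 s
\]
for a constant $C_1$ independent of $s$ and $x$; likewise $\mathbb E_{\delta_x}[N_s]=\psi_s^{(1)}[\mathbf 1](x)\le C_1$ by the accompanying $\Delta_s^{(1)}$ bound. (Alternatively one writes $\eta_s^{(1)}[\mathbf 1](x)=\int_0^s\psi_u[\mathbf 1](x)\,du$ and bounds $\psi_u[\mathbf 1](x)$ uniformly in $u,x$ via the many-to-one formula on a compact initial interval and the ergodic estimate in \ref{a:PF} thereafter; criticality is essential, as a crude many-to-one bound alone is only exponential in $s$ and would be useless in the optimisation.) Thus $\mathbb P_{\delta_x}(L\ge t,\zeta\le s)\le C_1 s/t$.

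The remaining — and, I expect, the only nontrivial — point is a bound on the survival term that is \emph{uniform in $x\in E$}, namely $\mathbb P_{\delta_x}(\zeta>s)=\mathbb P_{\delta_x}(N_s>0)\le C_2/s$ for all $s$ large. The pointwise Kolmogorov asymptotic $s\,\mathbb P_{\delta_x}(N_s>0)\to 2\varphi(x)/\langle\mathcal V[\varphi],\tilde\varphi\rangle$ of Theorem \ref{thm:survival} gives such a bound with an $x$-dependent constant; since $\varphi$ is bounded, the uniform statement $\sup_{x\in E}s\,\mathbb P_{\delta_x}(N_s>0)\le C_2$ follows from the fact that this convergence is in fact uniform in $x$ (as established in \cite{BMPI}). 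If one prefers to argue directly from the Yaglom limit, conditioning on $\{N_s>0\}$ and using that $N_s=0$ forces $e^{-\theta N_s/s}=1$ gives, for any fixed $\theta>0$,
\[
\mathbb P_{\delta_x}(N_s>0)\;=\;\frac{\mathbb E_{\delta_x}\big[1-e^{-\theta N_s/s}\big]}{1-\mathbb E_{\delta_x}\big[e^{-\theta N_s/s}\mid N_s>0\big]};
\]
the numerator is at most $\theta\,\mathbb E_{\delta_x}[N_s]/s\le\theta C_1/s$ by the previous step, while by (the uniform-in-$x$ form of) Theorem \ref{thm:Yaglom} with $f=\mathbf 1$ the denominator converges to $\tfrac12\theta\langle\mathbf 1,\tilde\varphi\rangle\langle\mathcal V[\varphi],\tilde\varphi\rangle\big/\big(1+\tfrac12\theta\langle\mathbf 1,\tilde\varphi\rangle\langle\mathcal V[\varphi],\tilde\varphi\rangle\big)$, which is strictly positive (positivity of $\langle\mathcal V[\varphi],\tilde\varphi\rangle$ coming from \ref{a:ERG}); fixing $\theta$ and $s$ large, the denominator exceeds a positive constant, and the uniform bound follows.

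Combining the two estimates gives $\mathbb P_{\delta_x}(L\ge t)\le C_2/s+C_1 s/t$ for all $x\in E$ and all sufficiently large $s\ge 1$; taking $s=\sqrt t$ yields $\mathbb P_{\delta_x}(L\ge t)\le (C_1+C_2)/\sqrt t$ for $t\ge t_0$, while for $1\le t<t_0$ one simply uses $\mathbb P_{\delta_x}(L\ge t)\le 1\le\sqrt{t_0}/\sqrt t$. Setting $C=\max\{C_1+C_2,\sqrt{t_0}\}$ completes the proof. In short, the heart of the matter is the uniform-in-$x$ control of the survival probability; everything else is a routine pairing of Markov's inequality with the uniform moment bounds already at our disposal.
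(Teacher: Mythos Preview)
Your proof is correct and follows essentially the same approach as the paper: split on the event $\{N_{\sqrt t}>0\}$ (equivalently $\{\zeta>\sqrt t\}$), bound the survival probability via Kolmogorov's estimate, and bound the complementary piece by Markov's inequality together with the uniform first-moment bound from Theorem~\ref{thm:moments}. The paper's write-up is terser and does not dwell on the uniformity-in-$x$ of the survival bound that you (rightly) flag as the only nontrivial point; your alternative Yaglom-based derivation of this uniformity is a nice touch but not a different strategy.
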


\begin{proof}
We have
	\begin{align*}
		\mathbb{P}_{\delta_x}(L\ge t) & \le  \mathbb{P}_{\delta_x}(N_{\sqrt{t}}>0)+\mathbb{P}_{\delta_x}(L\ge t\, | \, N_{\sqrt{t}}=0) \\
			& \le \frac{C'}{\sqrt{t}}+\frac{\mathbb{E}_{\delta_x}[\int_0^{\sqrt{t}} \langle 1, X_h\rangle dh]}{t} \\
			& \le \frac{C'}{\sqrt{t}}+\frac{C \sqrt{t}}{t}, 
	\end{align*}	
where we have used Theorems \ref{thm:survival} and \ref{thm:moments} to bound the survival probability and the occupation measure, respectively. The result follows by taking $C=C'+C''$. 
\end{proof}

Recall that $I_t$ denotes the index of the tree being visited at time $t$ in the depth-first exploration and that $\tau_t$ denotes the time that this tree is completely explored (so $\tau_t\ge t$). 
\begin{lemma}\label{C:tailsIt}We have
	\[
	\mathbb{P}_{\delta_x}(I_t \ge M \sqrt{t}) \to 0
	\]
	as $M\to \infty$, uniformly in $t\ge 1$ and $x \in E$. 
\end{lemma}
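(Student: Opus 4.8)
The plan is to relate the event $\{I_t \ge M\sqrt t\}$ to the behaviour of a sequence of i.i.d.\ tree lengths $L_1, L_2, \dots$ (each distributed as $L$ under $\mathbb P_{\delta_x}$), for which we have the uniform tail bound from Lemma \ref{lem:length}. Indeed, in the depth-first exploration of the i.i.d.\ sequence of trees, the $i$-th tree occupies the time interval of length $L_i$, so $I_t \ge m$ exactly when the first $m-1$ trees have been fully explored by time $t$, i.e.\ when $L_1 + \dots + L_{m-1} \le t$. Thus
\[
\mathbb P_{\delta_x}(I_t \ge M\sqrt t) = \mathbb P\Big(\textstyle\sum_{i=1}^{\lceil M\sqrt t\rceil - 1} L_i \le t\Big).
\]

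First I would reduce to a Chernoff-type (or truncation) estimate. Since $L_i \ge 0$, for any $\kappa > 0$,
\[
\mathbb P\Big(\textstyle\sum_{i=1}^{m-1} L_i \le t\Big) \le \mathbb P\Big(\textstyle\sum_{i=1}^{m-1} (L_i \wedge \kappa t) \le t\Big).
\]
Choosing $\kappa = 1$ say, each summand $L_i \wedge t$ is a bounded nonnegative random variable; by Lemma \ref{lem:length} with the threshold $\sqrt t$ (valid since $t\ge 1$) we get a lower bound on its mean, $\mathbb E[L_i \wedge t] \ge \sqrt t\,\mathbb P_{\delta_x}(L \ge \sqrt t)$, which, combined with the fact that $\mathbb P_{\delta_x}(L \ge s)$ does \emph{not} decay faster than $c/\sqrt s$ in the regime we need — actually the cleanest route avoids needing a matching lower bound: instead observe $\mathbb E[L_i \wedge t] \ge \delta$ for some $\delta = \delta(x)$ independent of $t \ge 1$ only if $\mathbb P_{\delta_x}(L>0)$ is bounded below, which may fail uniformly in $x$. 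To sidestep this, I would instead bound the truncated sum's mean directly using Theorem \ref{thm:moments}: $\mathbb E_{\delta_x}[L \wedge t] \ge c\sqrt t$ uniformly in $x$, because the expected occupation $\mathbb E_{\delta_x}[\int_0^{\sqrt t} \langle 1, X_s\rangle\,ds]$ grows like $\varphi(x)\sqrt t$ and contributes to $L$. Then with $m - 1 \asymp M\sqrt t$, the truncated sum has mean $\gtrsim M t$, so $\{\sum (L_i\wedge t) \le t\}$ is a large-deviation event below a $1/M$ fraction of the mean; a one-sided Bernstein/Bennett inequality for bounded nonnegative summands gives a bound of the form $\exp(-c M)$, uniform in $t \ge 1$ and $x \in E$, which vanishes as $M \to \infty$.

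The main obstacle will be getting the required estimates \emph{uniformly in $x \in E$}. The quantity $\mathbb E_{\delta_x}[L\wedge t]$ is proportional to $\varphi(x)$ to leading order, and $\varphi$ need not be bounded below on all of $E$ (only on compactly embedded subsets, by \ref{a:PF}). However, what we actually need is only a lower bound on $\mathbb E_{\delta_x}[L \wedge t]$ that does not degenerate; and here one should use that each $L_i$ in the i.i.d.\ sequence starts afresh from $\delta_x$, so by the branching/Markov property after the first branching event the descendants are spread according to $\tilde\varphi$-typical positions, making the relevant constants uniform. Concretely, I would condition on surviving to a fixed time and use the Yaglom-type lower bounds (Theorem \ref{thm:survival}, Theorem \ref{thm:moments}) to obtain $\mathbb P\big(\sum_{i=1}^{m-1} L_i \le t\big) \le \mathbb P(\mathrm{Bin}(m-1, p_t) \le K_t)$ with $p_t \asymp 1/\sqrt t$ the probability that a single tree has length $\ge \sqrt t$ (uniformly in $x$, via Lemma \ref{lem:length} together with a matching lower bound on $\mathbb P_{\delta_x}(L \ge \sqrt t)$ obtained from the survival probability asymptotics) and $K_t \asymp \sqrt t$; a binomial lower-tail (Chernoff) bound then yields decay $\exp(-cM)$ as $M\to\infty$, uniformly in $t\ge 1$ and $x$, completing the proof.
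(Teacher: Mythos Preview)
Your proposal takes a considerably more elaborate route than the paper, and in its final form contains a numerical slip that breaks the argument. The paper's proof is essentially a one-line inclusion: since $I_t\ge M\sqrt t$ forces $\sum_{i=1}^{\lceil M\sqrt t\rceil-1}L_i\le t$, and hence $L_i<t$ for every such $i$, independence gives
\[
\mathbb P_{\delta_x}(I_t\ge M\sqrt t)\ \le\ \bigl(1-\mathbb P_{\delta_x}(L\ge t)\bigr)^{M\sqrt t-1}\ \le\ (1-c/\sqrt t)^{M\sqrt t-1},
\]
and the right-hand side is dominated by $q^M$ for some $q<1$ uniformly in $t\ge1$. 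No truncation, Bernstein/Bennett inequality, or binomial tail bound is needed. (The displayed chain in the paper has the inequality directions written the wrong way round, but this is the evident intended argument.)

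The input that both your argument and the paper's require is a \emph{lower} bound $\mathbb P_{\delta_x}(L\ge t)\ge c/\sqrt t$; note that Lemma~\ref{lem:length} supplies only the matching \emph{upper} bound, so one must appeal separately to the survival asymptotics (Theorem~\ref{thm:survival}) together with a Yaglom-type estimate on the size of $L$ given survival. Your observation that the resulting constant carries a factor $\varphi(x)$, and hence that uniformity over all of $E$ is not immediate, is valid and applies equally to the paper's argument; in the downstream uses (Lemmas~\ref{C:tailstaut}, \ref{C:tailsJt} and Proposition~\ref{P:QV}) $x$ is fixed, so the pointwise statement is what is actually needed.

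Your binomial reduction at the end, however, does not work as written. With threshold $\sqrt t$ you assert $p_t:=\mathbb P_{\delta_x}(L\ge\sqrt t)\asymp t^{-1/2}$, whereas in fact $p_t\asymp t^{-1/4}$ (this is exactly the scaling in Lemma~\ref{lem:length} applied at level $\sqrt t$). With the corrected rate, $\mathrm{Bin}(M\sqrt t,\,p_t)$ has mean of order $M t^{1/4}$, which for fixed $M$ is eventually much smaller than $K_t\asymp\sqrt t$, so the lower-tail event $\{\mathrm{Bin}\le K_t\}$ has probability tending to $1$, not $0$, as $t\to\infty$. The fix is to use threshold $t$ rather than $\sqrt t$---at which point the simple inclusion above makes the concentration machinery redundant.
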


\begin{proof}
	Write $L_i$ for the total length of the $i$-th tree in the sequence 
\begin{align*}
	\mathbb{P}(I_t \ge M\sqrt{t}) & \le \mathbb{P}\bigg(\sum_{i=1}^{M\sqrt{t}} L_i \ge t\bigg) \\
	& \le \mathbb{P}\bigg(\bigcap_{i=1}^{M\sqrt{t}} \{L_i\ge t\}\bigg) \\
	& \le \bigg(1-\frac{C}{\sqrt{t}}\bigg)^{M\sqrt{t}}.
\end{align*}
Since $(1-C/\sqrt{t})^{\sqrt{t}}\to e^{-C}$ as $t\to \infty$, there exists $q<1$, such that $(1-C/\sqrt{t})^{\sqrt{t}}<q$ for all $t\ge 1$. 
So 
\[ 
\sup_{t\ge 1} \mathbb{P}(I_t\ge M\sqrt{t})\le q^M
\]
which converges to $0$ as $M\to \infty$. 
\end{proof}

\begin{lemma}\label{C:tailstaut}We have
\[
	\mathbb{P}_{\delta_x}(\tau_t \ge M t) \to 0
\]
as $M\to \infty$, uniformly in $t\ge 1$ and $x \in E$. 
\end{lemma}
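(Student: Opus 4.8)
The plan is to reduce the completion time $\tau_t$ of the current tree to the length of a \emph{single} tree, and then bound that length using Lemma~\ref{lem:length} together with the control on $I_t$ provided by Lemma~\ref{C:tailsIt}. As in the proof of Lemma~\ref{C:tailsIt}, write $L_i$ for the total length of the $i$-th tree in the i.i.d.\ sequence, so that the depth-first exploration traverses tree $i$ during the time interval $[\sum_{j<i}L_j,\sum_{j\le i}L_j)$. First I would observe that, since the exploration is inside tree $I_t$ at time $t$, we have $\sum_{j<I_t}L_j\le t$, whereas $\tau_t=\sum_{j\le I_t}L_j$ is precisely the time at which tree $I_t$ is completely explored. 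Subtracting gives the key deterministic inequality
\[
\tau_t\le t+L_{I_t},
\]
and hence $\mathbb{P}(\tau_t\ge Mt)\le\mathbb{P}\big(L_{I_t}\ge (M-1)t\big)$ for every $M\ge 1$.

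Next I would split according to the size of $I_t$: for any $K>0$,
\[
\mathbb{P}\big(L_{I_t}\ge (M-1)t\big)\le \mathbb{P}\big(I_t\ge K\sqrt{t}\big)+\mathbb{P}\Big(\max_{1\le i\le \lceil K\sqrt{t}\rceil}L_i\ge (M-1)t\Big).
\]
For the second term, a union bound followed by Lemma~\ref{lem:length} yields, for $M\ge 2$ and $t\ge 1$ (so that $(M-1)t\ge 1$),
\[
\mathbb{P}\Big(\max_{1\le i\le \lceil K\sqrt{t}\rceil}L_i\ge (M-1)t\Big)\le \lceil K\sqrt{t}\rceil\,\frac{C}{\sqrt{(M-1)t}}\le \frac{(K+1)\,C}{\sqrt{M-1}},
\]
using $\lceil K\sqrt{t}\rceil\le (K+1)\sqrt{t}$ for $t\ge 1$; note that this bound is uniform in $t\ge 1$. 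Finally, given $\varepsilon>0$, Lemma~\ref{C:tailsIt} lets us fix $K$ with $\sup_{t\ge 1}\mathbb{P}(I_t\ge K\sqrt{t})<\varepsilon/2$, and then choose $M$ so large that $(K+1)C/\sqrt{M-1}<\varepsilon/2$; combining the displays gives $\sup_{t\ge 1}\mathbb{P}(\tau_t\ge Mt)<\varepsilon$, which is the claim.

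There is no serious obstacle here, but the one point requiring care — and the reason for the slightly indirect route above — is that one should \emph{not} attempt to dominate $\tau_t$ by a sum of $\lceil K\sqrt{t}\rceil$ independent tree lengths and apply Markov's inequality: for a critical tree $L$ has infinite mean (its tail is of order $t^{-1/2}$ by Lemma~\ref{lem:length}), so such a bound would be vacuous. Isolating the single length $L_{I_t}$ via $\tau_t\le t+L_{I_t}$ is exactly what makes the union bound over the $\lceil K\sqrt{t}\rceil$ candidate trees summable, since each term then decays like $(Mt)^{-1/2}$ while there are only of order $\sqrt{t}$ of them.
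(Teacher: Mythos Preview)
Your proof is correct and follows essentially the same route as the paper: split according to whether $I_t\ge K\sqrt{t}$, and on the complementary event bound the maximum of the first $K\sqrt{t}$ tree lengths using Lemma~\ref{lem:length}. The only difference is cosmetic --- you control the maximum via a union bound, yielding the clean estimate $(K+1)C/\sqrt{M-1}$, whereas the paper passes through the complement probability $1-(1-C/\sqrt{(M-1)t})^{K\sqrt{t}}$ and an exponential-type argument; your route is slightly more direct.
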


\begin{proof}
	For any $K>0$, 
\[
\mathbb{P}_{\delta_x}(\tau_t \ge M t) \le \mathbb{P}(I_t\ge K\sqrt{t})+\mathbb{P}(\max\{L_1,\dots, L_{K\sqrt{t}}\}\ge (M-1)t).
\]
We have 
\begin{align*}
\mathbb{P}(\max\{L_1,\dots, L_{K\sqrt{t}}\}\ge (M-1)t) & = 1- \mathbb{P}(\cap_{i=1}^{K\sqrt{t}} \{L_i < (M-1)t\}) \\
& \le 1 - \bigg(1-\frac{C}{\sqrt{(M-1)t}}\bigg)^{K\sqrt{t}}.
\end{align*}
So by Lemma \ref{C:tailsIt}, it is sufficient to prove that for fixed $K>0$, 
\[
\sup_{t\ge 1}\left( 1 - \bigg(1-\frac{C}{\sqrt{(M-1)t}}\bigg)^{K\sqrt{t}}\right) \to 0, \, \text{ as } M\to \infty.
\]
But for this fixed $K$, there exists $q>0$ such that 
\[ \sup_{t\ge 1,M\ge 1} \bigg(1-\frac{C}{\sqrt{(M-1)t}}\bigg)^{K\sqrt{(M-1)t}}\ge q\]
which means that 
\[
\sup_{t\ge 1}\left( 1 - \bigg(1-\frac{C}{\sqrt{(M-1)t}}\bigg)^{K\sqrt{t}}\right) \le 1-q^{1/\sqrt{M-1}}
\]
and this indeed converges to $0$ as $M\to \infty$.
\end{proof}

\begin{lemma}\label{C:tailsJt}
We have 
	\[
\limsup_{t\to \infty} \sup_x \frac{	\mathbb{E}_{\delta_x}\left[\sum_{v\in D_t} \varphi(X_v(b_v))\right]}{\sqrt{t}}<\infty.
	\]
\end{lemma}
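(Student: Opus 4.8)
The plan is to control the expected total $\varphi$-mass of the discovered-but-unexplored particles $D_t$ by relating it to a sum over the trees that have been at least partially explored by time $t$, and then exploiting the many-to-one formula together with the moment estimates of Theorem \ref{thm:moments}. First I would observe that if $v \in D_t$, then $v = wi$ for some already-explored particle $w$ (i.e.\ $w \in E_t$) whose parent branch has been traversed; in particular, every such $v$ lies in one of the trees $X^1, \dots, X^{I_t}$, and within each such tree the contribution is bounded by the total $\varphi$-mass of all particles ever born in that tree along the ``left boundary'' of the exploration at the current height. More crudely, summing over \emph{all} particles ever born in the first $I_t$ trees and using $\varphi \le \|\varphi\|_\infty$ gives the rough bound
\[
  \sum_{v \in D_t}\varphi(X_v(b_v)) \;\le\; \|\varphi\|_\infty \sum_{i=1}^{I_t} |D_t \cap X^i|,
\]
but this is too lossy; instead I would use that $\sum_{v \in D_t}\varphi(X_v(b_v))$, restricted to a single tree being explored, is (up to the martingale $M$ and bounded error terms) of the same order as $\varphi(\zeta_t)$ plus a finite-variance fluctuation, so that its expectation within one tree is $O(1)$ uniformly.

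Concretely, the cleaner route is: write $D_t = \bigsqcup_{i=1}^{I_t} D_t^{(i)}$ where $D_t^{(i)}$ are the discovered-unexplored particles in the $i$-th tree. For $i < I_t$ the $i$-th tree has been \emph{fully} explored, so $D_t^{(i)} = \emptyset$; hence only $D_t^{(I_t)}$ is nonempty, and $D_t^{(I_t)}$ consists of the as-yet-unexplored siblings along the ancestral line of $v_t$ in tree $I_t$, at ``height'' $h_t$. Thus
\[
  \sum_{v \in D_t}\varphi(X_v(b_v)) \;=\; \sum_{v \in D_t^{(I_t)}}\varphi(X_v(b_v)) \;\le\; M_t + (I_t - 1)\|\varphi\|_\infty,
\]
using the definition of $M_t$ and $\varphi \ge 0$, $\varphi(\zeta_t) \ge 0$. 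Since $M_t$ is a martingale started from $\varphi(x) \le \|\varphi\|_\infty$, we have $\mathbb{E}_{\delta_x}[M_{t \wedge \sigma}] = \|\varphi\|_\infty$-bounded... but $M$ is only a \emph{local} martingale and $t$ is not a stopping time adapted in the naive way, so one must be slightly careful: I would instead bound $\mathbb{E}_{\delta_x}[M_t]$ using optional stopping at $\tau_t \wedge (Mt)$ together with the second-moment control from Lemma \ref{lem:mgs} (the quadratic variation $\langle M\rangle_t = \int_0^t f(\zeta_s)\,ds \le \|f\|_\infty t$, so $\mathbb{E}[M_{t\wedge \tau_t}^2] \le \|\varphi\|_\infty^2 + \|f\|_\infty \,\mathbb{E}[t \wedge \tau_t]$), and Lemma \ref{C:tailstaut} to show $\mathbb{E}[t \wedge \tau_t] = O(t)$ — giving $\mathbb{E}[M_t] = O(\sqrt t)$. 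Combining with Lemma \ref{C:tailsIt}, which gives $\mathbb{E}_{\delta_x}[I_t] = O(\sqrt t)$ uniformly in $x$ (integrate the tail bound $\mathbb{P}(I_t \ge M\sqrt t) \le q^M$), yields
\[
  \mathbb{E}_{\delta_x}\Big[\sum_{v \in D_t}\varphi(X_v(b_v))\Big] \;\le\; \mathbb{E}_{\delta_x}[M_t] + \|\varphi\|_\infty\,\mathbb{E}_{\delta_x}[I_t] \;=\; O(\sqrt t),
\]
uniformly in $x \in E$, which is exactly the claim after dividing by $\sqrt t$.

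The main obstacle I anticipate is the bookkeeping needed to make the inequality $\sum_{v \in D_t}\varphi(X_v(b_v)) \le M_t + (I_t-1)\|\varphi\|_\infty$ rigorous and to justify the moment bound on $M_t$ at the (random, non-stopping) time $t$: $M$ is indexed by exploration-time, not branching-time, and one needs that the exploration of the trees up to real time $t$ corresponds to a bounded stopping time in the right filtration, so that optional stopping / Doob's $L^2$ inequality apply. An alternative that sidesteps this is a direct many-to-one computation: $\mathbb{E}_{\delta_x}\big[\sum_{v \in D_t}\varphi(X_v(b_v))\big]$ can be written as an integral over the ancestral spine of the exploration, each ``extra child at a branch event along the spine'' contributing its expected $\varphi$-mass $\tfrac{m[\varphi]}{\varphi}\varphi = m[\varphi]$, integrated against the branching rate along a path of length $h_t$; since $\mathbb{E}[h_t]$ and the number of trees are both $O(\sqrt t)$ by the preceding lemmas and $\|\gamma\|_\infty, \|m[\varphi]\|_\infty < \infty$, one again gets $O(\sqrt t)$. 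I would present the martingale argument as the main line and remark that the many-to-one computation gives an independent check.
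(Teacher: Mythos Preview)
Your approach is essentially the paper's: rearrange the defining identity for $M_t$ to get
\[
\sum_{v\in D_t}\varphi(X_v(b_v)) = M_t - \varphi(\zeta_t) + (I_t-1)\varphi(x),
\]
then use $\mathbb{E}[I_t]=O(\sqrt{t})$ via the tail bound of Lemma~\ref{C:tailsIt} exactly as you describe.

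The only real difference is your handling of $\mathbb{E}[M_t]$. The paper simply writes $\mathbb{E}[M_t]=\varphi(x)$, treating $M$ as a true martingale. Your concern that Lemma~\ref{lem:mgs} only gives a \emph{local} martingale is legitimate, but the resolution is immediate: since $\langle M\rangle_t=\int_0^t f(\zeta_s)\,ds\le \|f\|_\infty t$, we have $\mathbb{E}[\langle M\rangle_t]<\infty$ for every $t$, which upgrades $M$ to a genuine $L^2$ martingale; hence $\mathbb{E}[M_t]=\varphi(x)=O(1)$, not merely $O(\sqrt t)$. Your proposed workaround (optional stopping at $\tau_t\wedge(Mt)$, then bounding $\mathbb{E}[t\wedge\tau_t]$) is muddled: $\tau_t\ge t$ always, so $t\wedge\tau_t=t$ and $M_{t\wedge\tau_t}=M_t$, making those steps vacuous. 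The $L^2$ bound you wrote down is correct and already suffices, but you should recognise that it in fact yields the sharper conclusion $\mathbb{E}[M_t]=\varphi(x)$ directly.

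Your observation that $D_t$ lies entirely in the current tree $X^{I_t}$ is correct but not needed; the identity above holds regardless. The ``alternative many-to-one'' sketch at the end is vague (you would need $\mathbb{E}[h_t]=O(\sqrt t)$, which is not established), so I would drop it.
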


\begin{proof}
	Note that from the definition of $(M_t)_{t \ge 0}$, we have 
    \[
      \mathbb E_{\delta_x}\left[\sum_{v \in D_t} \varphi(X_v(b_v))\right] = \mathbb E_{\delta_x}[M_t] + \mathbb E_{\delta_x}\left[\sum_{v \in E_t \cap \mathbb N}\varphi(X_v(b_v))\right] - \mathbb E_{\delta_x}[\varphi(\zeta_t)].
    \]
    Since $(M_t)_{t \ge 0}$ is a martingale we have $\mathbb E[M_t] = \mathbb E[M_0] = \varphi(x)$. This and boundedness of $\varphi$ show that the first and third terms are $o(\sqrt{t})$ as $t \to \infty$, uniformly in $x$. We are thus left to deal with the second term. For this, note that 
    \[
      \mathbb E_{\delta_x}\left[\sum_{v \in E_t \cap \mathbb N}\varphi(X_v(b_v))\right] = \mathbb E_{\delta_x}[I_t - 1]\varphi(x)
    \]
    and so it remains to control $\mathbb E[I_t]$. To this end, fix $M > 0$ and write,
    \[
      \mathbb E[I_t] = \sum_{n \ge 0}\mathbb P(I_t > n) = \sum_{n \le M\sqrt{t}}\mathbb P(I_t > n) + \sum_{n \ge M\sqrt{t}}\mathbb P(I_t > n).
    \]
    The first term on the right-hand side above is trivially bounded above by $M\sqrt{t}$. For the second term, thanks to Lemma \ref{C:tailsIt}, we may choose $M$ sufficiently large so that the second term on the right-hand side above is $o(\sqrt{t})$, uniformly in $t$. Putting the pieces together concludes the proof.
\end{proof}

\begin{proof}[Proof of Proposition \ref{P:QV}]
Write $(X_h^i)_{h\ge 0}$ for the $i$-th branching process (i.e.\,the associated atomic measure) in the i.i.d.\,sequence. Consider all the subprocesses %whose roots have been visited in the depth first exploration before time $t$, but not explored - including the subprocess rooted at the particle corresponding to $\zeta_t$ itself.
rooted at particles in $D_t\cup \{v_t\}$. Enumerate them $j=1,\ldots, J_t=|D_t|+1$, and write $\tilde{X}_h^j$ for the measure associated to the $j$-th (sub)-branching process. Then we have 
\[ \int_0^t f(\zeta_s) ds = \sum_{i=1}^{I_t} \int_0^\infty \frac{\langle  X_h^i , f\rangle}{\langle X_h^i, 1 \rangle}\langle X_h^i, 1 \rangle dh - \sum_{j=1}^{J_t} \int_0^\infty \frac{\langle X_h^i , f\rangle}{\langle X_h^i, 1 \rangle}\langle \tilde{X}_h^j,1 \rangle dh, \]
and 
\[ \sigma^2(f) t = \sum_{i=1}^{I_t} \int_0^\infty \frac{\langle \tilde{\varphi}, f\rangle}{\langle \tilde{\varphi},1 \rangle}\langle  X_h^i , 1 \rangle dh - \sum_{j=1}^{J_t} \int_0^\infty \frac{\langle  \tilde{\varphi}, f \rangle}{\langle \tilde{\varphi}, 1\rangle}\langle  \tilde{X}_h^j,1 \rangle dh. \]

Writing 
\[ 
F_h^i=\frac{\langle  X_h^i , f\rangle}{\langle X_h^i, 1 \rangle}-\frac{\langle \tilde{\varphi}, f\rangle}{\langle \tilde{\varphi},1 \rangle}\, \text{ and } \, \tilde{F}_h^j = \frac{\langle  \tilde{X}_h^j , f\rangle}{\langle \tilde{X}_h^j, 1 \rangle}-\frac{\langle \tilde{\varphi}, f\rangle}{\langle \tilde{\varphi},1 \rangle},
\]
we have 
\[
\frac{1}{t} \left(\int_0^t f(\zeta_s) ds - \sigma^2(f)t\right)=\frac{1}{t} \left(\sum_{i=1}^{I_t} \int_0^\infty F_h^i \langle X_h^i, 1 \rangle dh - \sum_{j=1}^{J_t} \int_0^\infty \tilde{F}_h^j \langle \tilde{X}_h^j, 1 \rangle dh \right).
\]
Now, for any $\delta>0$,
\begin{align*}
\frac{1}{t} \left|\int_0^t f(\zeta_s) ds - \sigma^2(f)t\right|  & \le  \frac{1}{t} \sum_{i=1}^{I_t} \int_0^\infty |F_h^i|\mathrm{1}_{|F_h^i|\le \delta} \langle X_h^i, 1 \rangle dh+ \frac{1}{t} \sum_{i=1}^{I_t}\int_0^\infty |F_h^i|\mathrm{1}_{|F_h^i|>\delta} \langle X_h^i, 1 \rangle dh \\
& + \frac{1}{t} \sum_{j=1}^{J_t} \int_0^\infty |\tilde F_h^j|\mathrm{1}_{|\tilde F_h^j|\le \delta} \langle \tilde X_h^j, 1 \rangle dh+ \frac{1}{t} \sum_{j=1}^{J_t}\int_0^\infty |\tilde F_h^j|\mathrm{1}_{|\tilde F_h^j|>\delta} \langle \tilde X_h^j, 1 \rangle dh
\end{align*}
which, since $|\tilde{F}_h^j|\le c, |F_h^i|\le c$ for some $c$ depending only on $f$, gives 
\[ 
\frac{1}{t} \left|\int_0^t f(\zeta_s) ds - \sigma^2(f)t\right| \le 2\delta \frac{\tau_t}{t} + \frac{1}{t}  \sum_{i=1}^{I_t}\int_0^\infty \mathrm{1}_{|F_h^i|>\delta} \langle X_h^i, 1 \rangle dh  + \frac{1}{t} \sum_{j=1}^{J_t}\int_0^\infty \mathrm{1}_{|\tilde F_h^j|>\delta} \langle \tilde X_h^j, 1 \rangle dh.
\]
By Lemma \ref{C:tailstaut}, it therefore suffices to prove that for any fixed $\delta>0$, 
\begin{equation}\label{E:badF}
\frac{1}{t}  \sum_{i=1}^{I_t}\int_0^\infty \mathrm{1}_{|F_h^i|>\delta} \langle X_h^i, 1 \rangle dh  + \frac{1}{t} \sum_{j=1}^{J_t}\int_0^\infty \mathrm{1}_{|\tilde F_h^j|>\delta} \langle \tilde X_h^j, 1 \rangle dh
=: \frac{E_t(\delta)}{t} \to 0,
\end{equation}
in probability, as $t\to \infty$. For $M\in (0,\infty)$ write 
\[
E_t^M(\delta)=\frac{1}{t}  \sum_{i=1}^{I_t\wedge M\sqrt{t}}\int_0^{M^2\sqrt{t}} \mathrm{1}_{|F_h^i|>\delta} \langle X_h^i, 1 \rangle dh  + \frac{1}{t} \sum_{j=1}^{J_t}\int_0^{M^2\sqrt{t}} \mathrm{1}_{|\tilde F_h^j|>\delta} \langle \tilde X_h^j, 1 \rangle dh
\]
so that for any given $\eta>0$, 
\begin{align*}
\mathbb{P}_{\delta_x}\bigg(\frac{E_t(\delta)}{t} \ge \eta\bigg) & \le \mathbb{P}_{\delta_x}\bigg(\frac{E_t^M(\delta)}{t}\ge \eta\bigg)
 +\mathbb{P}_{\delta_x}(I_t \ge M\sqrt{t})\\
 &\quad+ %\mathbb{P}_{\delta_x}(J_t\ge M\sqrt{t}) 
\mathbb{P}_{\delta_x}(\langle X^i_{M^2\sqrt{t}}, 1 \rangle >0 %\text{ or } \langle \tilde{X}^j_{M\sqrt{t}},1\rangle >0 
\text{ for some } 1\le i\le M\sqrt{t})
\\ &\qquad +\mathbb{P}_{\delta_x}( \langle \tilde{X}^j_{M^2\sqrt{t}},1\rangle >0 
\text{ for some } 1\le j\le J_t).
\end{align*}
The result will thus be proved if we can show that the second, third and fourth terms all tend to $0$ as $M\to \infty$, uniformly in $t$, and that the first term tends to $0$ as $t\to \infty$ for any fixed $\delta, M, \eta > 0$ and $x \in E$. 

For the first term, observe that 
\begin{multline*}
\mathbb{E}_{\delta_x}\bigg(\frac{E^M_t(\delta)}{t}\bigg) \le \frac{M}{\sqrt{t}} \sup_{y\in E} \int_0^{M^2\sqrt{t}} \mathbb{E}_{\delta_y}[\langle X_h, 1 \rangle 1_{|\frac{\langle X_h, f \rangle}{\langle X_h, 1\rangle}-\frac{\langle \tilde \varphi, f \rangle}{\langle \tilde\varphi, 1 \rangle}|\ge \delta } \, | \, \langle X_h, 1 \rangle >0]\mathbb{P}_{\delta_y}(\langle X_h, 1 \rangle >0)  \, dh \\ 
+ \frac{1}{t}\int_0^{M^2\sqrt{t}} \mathbb{E}_{\delta_x}\left[ \sum_{j=1}^{J_t}  \mathbb{E}_{\tilde{X}_0^j} [\langle X_h, 1 \rangle 1_{|\frac{\langle X_h, f \rangle}{\langle X_h, 1\rangle}-\frac{\langle \tilde \varphi, f \rangle}{\langle \tilde\varphi, 1 \rangle}|\ge \delta } \, | \, \langle X_h, 1 \rangle >0]\mathbb{P}_{\tilde{X}_0^j}(\langle X_h, 1 \rangle >0)  \right] dh 
\end{multline*} 

Now by \cite{Yaglom2022}
and Theorem \ref{thm:survival}, we have that for any fixed $\delta>0$
\[ 
\alpha(h):=\sup_{y\in E}  \frac{1}{\varphi(y)} \mathbb{E}_{\delta_y}[\langle X_h, 1 \rangle 1_{|\frac{\langle X_h, f \rangle}{\langle X_h, 1\rangle}-\frac{\langle \tilde \varphi, f \rangle}{\langle \tilde\varphi, 1 \rangle}|\ge \delta } \, | \, \langle X_h, 1 \rangle >0]\mathbb{P}_{\delta_y}(\langle X_h, 1 \rangle >0)  \, dh \to 0,
\] as $h\to \infty$. This gives
\[
\mathbb{E}_{\delta_x}\left[\frac{E^M_t(\delta)}{t}\right] \le \int_0^{M^2\sqrt{t}} \left(\frac{M}{\sqrt{t}} + \frac{\mathbb{E}_{\delta_x}[\sum_{j=1}^{J_t} \varphi(\tilde{X}_0^j)]}{t} \right) \alpha(h) dh, 
\]
so by the Riemann--Lebesgue lemma and Lemma \ref{C:tailsJt}, we see that 
\[
\mathbb{E}_{\delta_x}\left[\frac{E_t^M(\delta)}{t}\right]\to 0,
\]
as $t\to \infty$, for any fixed $M, \delta > 0$ and $x \in E$, and hence $\mathbb{P}_{\delta_x}(E_t^M(\delta)\ge \eta t)\to 0$ as $t\to \infty$ for any fixed $M,\eta, \delta > 0$ and $x \in E$.
 
The second term tends to $0$ as $M\to \infty$ uniformly in $t$ by Lemmas \ref{C:tailsIt} and \ref{C:tailsJt}.  For the third term we use a union bound and Theorem \ref{thm:survival} to deduce that this probability is bounded above by a universal constant times $1/M$, which therefore converges to $0$ as $M\to \infty$. 

For the fourth term, we condition on the positions $\{\zeta_t, \{X_v(b_v); v\in D_t\}\}$ to bound
\[\mathbb{P}_{\delta_x}( \langle \tilde{X}^j_{M^2\sqrt{t}},1\rangle >0 
\text{ for some } 1\le j\le J_t) 
\le \mathbb{E}_{\delta_x}\left[\sum_{v\in D_t} \mathbb{P}_{\tilde{X}_0^j}(N_{M^2\sqrt{t}}>0) + \mathbb{P}_{\zeta_t}(N_{M^2\sqrt{t}}>0)\right],
\]
which converges to $0$ as $M\to \infty$, uniformly in $t$, by Lemma \ref{C:tailsJt}.
\end{proof}

\begin{proof}[Proof of Corollary \ref{cor:mgalefcltexcursion}]
This is a standard corollary of Theorem \ref{thm:CLT}, which can be proven in an identical manner to \cite[Proposition 2.5.2]{LeGall02} or \cite[Proposition 6.13]{ellen}, for instance. We omit the details.
\end{proof}

\subsection{Proof of Theorem \ref{thm:main}}\label{sec:mainproof}
 \noindent{\bf Outline of the proof.} We will prove Theorem \ref{thm:main} via the following steps.
	\begin{itemize}
		\item[Step 1.] 
		We first show that if two particles $v,w$ are chosen uniformly (from the duration of the depth first exploration in a tree conditioned to survive until time $n$) then the difference between the tree distance 
		\[
		h_v+h_w-2\min_{u\preceq v, u\preceq w} h_u
		\]
		and the quantity 
		\[ 
		{\frac{2}{\langle \tilde\varphi, \gamma \mathcal{V}[\varphi]\rangle} \left(\hat M_{t(v)}+ \hat M_{t(w)}-2\min_{s\in [t(w)\wedge t(v), t(w)\vee t(v)]}\hat M_s
        \right)}
		\]
		 tends to $0$ in probability as $n\to \infty$, where $t(u)$ is the time that a particle $u$ is visited in the depth-first exploration. 
		\item[Step 2.] Hence, for any finite $k$, the above holds pairwise for $k$ uniformly chosen particles with arbitrarily high probability as $n\to \infty$. We will then combine this with Theorem \ref{thm:CLT} to prove that the pairwise tree distances of $k$ uniformly chosen particles converge in law to the pairwise distances in the CRT (conditioned to reach height one).
		\item[Step 3.] We will then prove that the lower mass bound \eqref{eq:LMprop} holds. By Lemma \ref{lem:GW-to-GHW}, this yields the result.
\end{itemize}

\subsubsection*{Step 1.}
\label{sec::connection}

Recall $\hat M$ from Corollary \ref{cor:mgalefcltexcursion} defined as the martingale $M$ but restricted to a single tree. We will make the connection between the height function $(h_t)_{t \ge 0}$ and $(\hat M_t^d)_{t \ge 0}$, where $\hat M_t^d = \hat M_t - \varphi(\zeta_t)$ under the law $\mathbb P_{\delta_x}(\cdot | N_n > 0)$. 
To do this, we follow the approach of \cite[Section 6.2]{ellen}.

For a tree $T$ and $u, v \in T$, we say that $u$ is a younger sibling of $v$ if there exists $w \in T$ and $1 \le j < k$ such that $u = wj$ and $v = wk$. Then, for $u \in T$, let $E(u)$ denote the set of younger siblings of the ancestors of $u$. With this notation, we set
\[
 \hat M(u) = \sum_{w \in E(u)}\varphi(X_w(b_w)).
\]
The motivation for this choice of notation comes from the fact that $\hat M(v_t)=\hat{M}_t^d$.
We now introduce the notion of good and bad particles: a good particle will be one for which $\hat M$ has an appropriate law-of-large-numbers behaviour. More precisely, given $\eta>0$, we will say that a pair $(u,n)$ with $u\in \mathcal N_n$ is $\eta$-{\it bad} if 
\begin{equation}\label{eqn::etabad}
	\left|\frac{\hat M(u)}{n}- \tfrac12\langle \tilde\varphi, \gamma\mathcal{V}[\varphi]\rangle\right|>\eta,
\end{equation} 
where $\mathcal V$ was introduced in Section \ref{subsec:results}. A pair $(u, n)$ is then said to be $\eta$-good if it is not $\eta$-bad. We also say, for a given $R\geq 0$ and $n\geq R$, that $(u,n)$ is $\eta_R$-{\it bad} if some $(v,s)$ with $R\leq s \leq n$, $v\in \mathcal N_s$ and $v\prec u$ is $\eta$-bad. That is, $(u,n)$ is $\eta_R$-good if all of the ancestors of $u$ after time $R$ are $\eta$-good. 

We have the following estimate for the proportion of $u\in \mathcal N_n$ such that $(u,n)$ is $\eta_R$-bad.

\begin{prop}\label{propn::etabad} 
Assume Assumption \ref{A:main} holds. Fix $\eps,\eta>0$ and write $\mathcal N_n^{\eta_R}:=
	\{u\in \mathcal N_n: (u,n) \text{ is } \eta_R-\text{bad}\}$, $N_n^{\eta_R} = |\mathcal N_n^{\eta_R}|$. Then we have
	\begin{equation}
		\label{eqn::etafbad}
		\sup_{n \ge R} \mathbb{P}_{\delta_x} \left( \left.\frac{N_n^{\eta_R}}{N_n}>\eps \right| N_n>0 \right) \to 0
	\end{equation}
	as $R\to \infty$, for any $x\in E$.
\end{prop}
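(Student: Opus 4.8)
\emph{Step 0 (reduction to a first-moment bound).} For any $\delta>0$ we have $\{N_n^{\eta_R}/N_n>\eps\}\subseteq\{N_n^{\eta_R}>\eps\delta n\}\cup\{N_n<\delta n\}$, and Corollary \ref{cor:joint_Yaglom} applied with $f_1=f_2=1$ shows $\mathbb P_{\delta_x}(N_n<\delta n\mid N_n>0)$ can be made small, uniformly in $n$, by taking $\delta$ small. By Markov's inequality $\mathbb P_{\delta_x}(N_n^{\eta_R}>\eps\delta n\mid N_n>0)\le \mathbb E_{\delta_x}[N_n^{\eta_R}]/(\eps\delta\, n\,\mathbb P_{\delta_x}(N_n>0))$, and $\inf_{n\ge R}n\mathbb P_{\delta_x}(N_n>0)>0$ once $R$ is large, by Theorem \ref{thm:survival}. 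Since also $N_n^{\eta_R}$ is non-increasing in $R$, the proposition reduces to showing
\[
\sup_{n\ge R}\mathbb E_{\delta_x}[N_n^{\eta_R}]\to0\quad\text{as }R\to\infty.
\]

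\emph{Step 1 (spine representation).} The event that $(u,n)$ is $\eta_R$-bad depends only on the ancestral line of $u$ and on the birth positions of the siblings it sheds, since $\hat M(u|_s)=\sum_{w\in E(u|_s)}\varphi(X_w(b_w))$ and $(u,n)$ is $\eta_R$-bad iff $|\hat M(u|_s)/s-c_0|>\eta$ for some $s\in[R,n]$, where $c_0$ is the constant in \eqref{eqn::etabad} and $u|_s$ is the ancestor of $u$ alive at time $s$. An enriched many-to-one formula — obtained from Lemma \ref{L:mt1} and the spine construction of Proposition \ref{prop:spine} by additionally recording, at each branching event of the spine, the shed (non-spine) offspring — therefore gives $\mathbb E_{\delta_x}[N_n^{\eta_R}]=\tilde{\mathbf E}_x[\e^{\int_0^n\gamma(Y_s)(m(Y_s)-1)\d s}\mathbf 1_{\{n<\mathtt k\}}\mathbf 1_{\{\exists\,s\in[R,n]:\,|\hat M^{\mathrm{sp}}_s/s-c_0|>\eta\}}]$, where $\hat M^{\mathrm{sp}}_s$ is the accumulated $\varphi$-mass of siblings shed by the spine $Y$ up to time $s$. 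Writing the density as $\e^{\int_0^n\gamma(Y_s)(m(Y_s)-1)\d s}\mathbf 1_{\{n<\mathtt k\}}=\varphi(x)\varphi(Y_n)^{-1}\,\d\tilde{\mathbf P}^\varphi_x/\d\tilde{\mathbf P}_x$ on $\mathcal F_n$ turns this into $\varphi(x)\,\tilde{\mathbf E}^\varphi_x[\varphi(Y_n)^{-1}\mathbf 1_{\{\exists\,s\in[R,n]:\,\ldots\}}]$, now with $Y$ and the shed-offspring structure governed by the $\mathbb P^\varphi$-spine dynamics. If $\varphi$ is bounded below the factor $\varphi(Y_n)^{-1}$ is harmless; otherwise one splits according to whether $Y_n$ lies in a compactly embedded $K$ with $\inf_K\varphi>0$, the contribution from $\{Y_n\notin K\}$ being $\psi_n[\mathbf 1_{K^c}](x)$, which by \ref{a:PF} is at most $\varphi(x)\tilde\varphi(K^c)+Ce^{-\eps' R}$ for $n\ge R$, hence small for $K$ large and $R$ large. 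As the bad event increases with the window $[R,n]$, it then suffices to prove $\hat M^{\mathrm{sp}}_s/s\to c_0$, $\tilde{\mathbf P}^\varphi_x$-almost surely as $s\to\infty$.

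\emph{Step 2 (ergodic law of large numbers for $\hat M^{\mathrm{sp}}$).} The process $(\hat M^{\mathrm{sp}}_s)_{s\ge0}$ is non-decreasing and pure-jump, jumping at the branching events of the spine; using Assumption \ref{a:VAR} its predictable compensator is $\int_0^s g(Y_u)\,\d u$ for an explicit bounded $g\ge0$, and $\hat M^{\mathrm{sp}}_s-\int_0^s g(Y_u)\,\d u$ is a locally square-integrable martingale with predictable bracket $\int_0^s\tilde g(Y_u)\,\d u$, $\tilde g$ bounded. Since the spine $(Y,\tilde{\mathbf P}^\varphi)$ is ergodic with stationary law $\varphi\tilde\varphi$ — the fact underlying Lemma \ref{lem:ergodic}, which was also exploited in the proof of Proposition \ref{P:QV} — Birkhoff's theorem gives $s^{-1}\int_0^s g(Y_u)\,\d u\to\langle\varphi\tilde\varphi,g\rangle$ and $s^{-1}\int_0^s\tilde g(Y_u)\,\d u\to\langle\varphi\tilde\varphi,\tilde g\rangle$ almost surely, so the strong law for square-integrable martingales forces the martingale part to be $o(s)$; hence $\hat M^{\mathrm{sp}}_s/s\to\langle\varphi\tilde\varphi,g\rangle$ a.s., and a direct computation (using $\psi_t[\varphi]=\varphi$ together with the definitions of $\rho$ and $\mathcal V$) identifies this limit with $c_0$. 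Plugging back into Step 1, dominated convergence gives $\tilde{\mathbf P}^\varphi_x(\exists\,s\ge R:|\hat M^{\mathrm{sp}}_s/s-c_0|>\eta)\to0$ as $R\to\infty$, uniformly in $n$, and combined with Step 0 this proves the proposition.

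\emph{Main obstacle.} The conceptual heart is Step 2: showing $\hat M^{\mathrm{sp}}$ grows almost surely linearly with precisely the slope $c_0$, uniformly over the tail $s\ge R$. Once the enriched spine is in place this is a clean consequence of ergodicity and the martingale strong law, so the genuine difficulty is the bookkeeping behind Step 1 — setting up the enriched many-to-one so that $\hat M^{\mathrm{sp}}$ is exactly the spine analogue of $\hat M$, and verifying the compensator $g$ (and thus the identification of its average with $c_0$) — together with the localisation needed when $\varphi$ is not bounded away from zero, where Assumptions \ref{a:PF} and \ref{a:HK} enter.
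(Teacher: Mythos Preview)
Your approach is genuinely different from the paper's and is conceptually appealing, but Step~2 contains a gap that is fatal under the stated assumptions. The point is that under the $\varphi$-spine law $\tilde{\mathbf P}^\varphi$, the offspring configuration at each branching event of the spine is drawn from the \emph{size-biased} law $\mathcal P^\varphi_x$, with $\d\mathcal P^\varphi_x/\d\mathcal P_x=\langle\varphi,\mathcal Z\rangle/m[\varphi](x)$. The jump of $\hat M^{\mathrm{sp}}$ at such an event is $\sum_{j<i^*}\varphi(x_j)$, whose second moment under $\mathcal P^\varphi_x$ (after averaging over $i^*$) is
\[
\frac{1}{m[\varphi](x)}\,\mathcal E_x\!\Big[\sum_i\varphi(x_i)\Big(\sum_{j<i}\varphi(x_j)\Big)^{\!2}\Big],
\]
which is a third-order quantity in $\mathcal Z$, of size $\mathcal E_x[N^3]$ up to constants. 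Assumption \ref{a:VAR} only gives $\sup_x\mathcal E_x[N^2]<\infty$, so the predictable bracket $\int_0^s\tilde g(Y_u)\,\d u$ you invoke need not even be finite, let alone bounded by a linear function; the ``strong law for square-integrable martingales'' is therefore not available, and you cannot conclude $M^{\mathrm{sp}}_s/s\to0$ almost surely. (A related but more minor issue: your claim that $g$ itself is bounded also fails in general, since $g(x)=\gamma(x)\mathcal V[\varphi](x)/(2\varphi(x))$ and $\varphi$ is not assumed bounded away from zero on all of $E$.)

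The paper sidesteps this entirely. Instead of bounding $\mathbb E_{\delta_x}[N_n^{\eta_R}]$ directly (which produces the awkward factor $\varphi(Y_n)^{-1}$ and forces your localisation), it first proves the result for the $\varphi$-\emph{weighted} fraction $\sum_u\varphi(X_u(n))\mathbf 1_{\{(u,n)\ \eta_R\text{-bad}\}}/\langle\varphi,X_n\rangle$. Under the change of measure to $\mathbb P^\varphi_{\delta_x}$ this fraction is exactly $\tilde{\mathbb P}^\varphi[(\Upsilon_n,n)\text{ is }\eta_R\text{-bad}\mid\mathcal F_n]$, so Markov's inequality reduces the task to controlling the probability that the spine itself is $\eta_R$-bad; this is handled by computing only the \emph{first} moment of $\hat M(\Upsilon_n)/n$ along the spine and appealing to Lemma~\ref{lem:ergodic}, so no square-integrability (and hence no hidden third moment of $N$) is needed. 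The passage back from $\mathbb P^\varphi$ to $\mathbb P(\,\cdot\mid N_n>0)$ is done by a uniform-integrability argument using Theorem~\ref{thm:Yaglom}, and the final step (unweighted fraction from $\varphi$-weighted fraction) uses Corollary~\ref{cor:joint_Yaglom}. Your Step~0 and the localisation in Step~1 would become unnecessary under this route.
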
 

\begin{proof}
	We will first show that for any $\eps>0$, setting 
	\[
	E_{R,n}^\eps:= \bigg\{\frac{\sum_{u\in \mathcal N_n} \vp(X_u(n))\I_{\{(u,n) \text{ is } \eta_R-\text{bad}\}}}{\langle \varphi, X_n\rangle}>\eps\bigg\},
	\] 
	we have
	\begin{equation} \label{eqn::weighted_average_ntbad}
	 \sup_{n\geq R} \mathbb{P}_{\delta_x}\left(E_{R,n}^\eps\, \big| \, N_n>0\right)\to 0,
	 \end{equation} 
	as $R\to \infty$. 
%	The reason for this approach is that it is more convenient (and essentially equivalent) to work with $E_{R,t}^\eps$ than $\{N_t^{\eta_R}/N_t>\eps\}$.

	To show \eqref{eqn::weighted_average_ntbad}, we will use the description of the system under the measure $\mathbb P_{\delta_x}^\varphi$ given in Section \ref{subsec:spine}. Recalling the change of measure \eqref{E:COM}, we see that	
	\[ \mathbb{P}_{\delta_x}(E_{R,n}^\eps \giv N_n>0 ) = \mathbb{P}_{\delta_x}^\varphi \left[\frac{\vp(x)/\mathbb{P}_{\delta_x}(N_n>0)}{\langle \varphi, X_n\rangle} \I_{E_{R,n}^\eps} \right]= \mathbb{P}_{\delta_x}^\varphi \left[ Z_n\,\I_{E_{R,n}^\eps}\right],
	\]
	where $Z_n:=\frac{\vp(x)/\mathbb{P}_{\delta_x}(N_n>0)}{\langle \varphi, X_n\rangle}$. To see that this converges to $0$ it is enough to prove that: 
	\begin{enumerate}[label={}]\setlength{\itemsep}{0em}
\nameditem{(i)}{pf:i} $\sup_{n \ge R}\mathbb{P}_{\delta_x}^\varphi(E_{R,n}^\eps)\to 0$ as $R\to \infty$; and 
\nameditem{(ii)}{pf:ii} for every $\delta>0$, there exists $R'$ and $K$ positive, such that 
		$\mathbb{P}_{\delta_x}^\varphi\left( Z_n \I_{|Z_n|>K} \right) \leq \delta$ for all $n\geq R'$.
\end{enumerate}
		
To show \ref{pf:i} we use the spine decomposition given in Section \ref{subsec:spine} to transfer the probability $\mathbb{P}_{\delta_x}^\varphi(E_{R,n}^\eps)$ to the probability that the spine particle is ``bad'' at sufficiently large times, which is unlikely. For this, let $\tilde{\mathbb{P}}_{\delta_x}^\varphi$ denote the law of the spine position and spine label $(Y_r,\Upsilon_r)_{r\ge 0}$, as constructed in Proposition \ref{prop:spine}. That is,  when restricted to the filtration $\mathcal F_\infty$ containing just the information about the branching process, $\tilde{\mathbb{P}}^\varphi$ is simply $\mathbb P^\varphi$. From Proposition \ref{prop:spine}, namely that the spine is chosen with probability proportional to $\varphi$ at each branching event, we have that 
\[
  \frac{\sum_{u \in \mathcal N_n}\varphi(X_u(n))\mathbf 1_{\{(u, n) \text{ is } \eta_R-\text{bad}\}}}{\langle \varphi, X_n\rangle} = \tilde{\mathbb E}_{\delta_x}[(\Upsilon_n, n) \text{ is } \eta_R-\text{bad} \mid  \mathcal F_n]. 
\]
This and Markov's inequality yield
\begin{align*}
\mathbb{P}_{\delta_x}^\varphi(E_{R,n}^\eps) &= \tilde{\mathbb{P}}_{\delta_x}^\varphi(E_{R,n}^\eps) \\
&\le \frac1\varepsilon \tilde{\mathbb{E}}_{\delta_x}^\varphi\left[ \frac{\sum_{u \in \mathcal N_n}\varphi(X_u(n))\mathbf 1_{\{(u, n) \text{ is } \eta_R-\text{bad}\}}}{\langle \varphi, X_n\rangle} \right] \\
&= \frac1\varepsilon\tilde{\mathbb P}_{\delta_x}^\varphi[(\Upsilon_n, n) \text{ is } \eta_R-\text{bad}]
\end{align*}
and so to show \ref{pf:i}, it suffices to prove that 
\begin{equation} \label{eqn::spine_not_bad} \sup_{n\geq R}\tilde{\mathbb{P}}_{\delta_x}^\varphi((\Upsilon_n,n) \text{ is } \eta_R-\text{bad})\to 0\end{equation} 
as $R\to \infty$. {Recalling the definition of being $\eta_R$-bad, \eqref{eqn::spine_not_bad} is equivalent to showing that 
\begin{equation}\label{eqn::etabadequiv}
\frac{\hat M(\Upsilon_s)}{s} \to \frac12\langle \tilde\varphi, \gamma\mathcal{V}[\varphi]\rangle
\quad \tilde{\mathbb{P}}_{\delta_x}^\varphi	-\text{a.s. as } s\to \infty.
\end{equation}
For this, we use the explicit description of the spine decomposition. This entails that, conditional on the spine motion $(Y_s)_{s \ge 0}$, births along the spine occur at rate $\rho(Y_s)=\gamma(Y_s)m[\varphi](Y_s)/\varphi(Y_s)$, and the positions of created children, described by a point measure $\mathcal{Z}$, has law $\mathcal{P}_{Y_s}^\varphi$ with 
		\[
		\frac{\mathrm{d}\mathcal{P}_x^\varphi}{\mathrm{d} \mathcal{P}_x} = \frac{\langle \varphi, \mathcal{Z} \rangle}{m[\varphi](x)}.
		\]
        Since conditionally on $\mathcal{Z}$ the spine particle is particle $i$ with probability $\varphi(x_i)/\langle \varphi,\mathcal{Z}\rangle$, the positions of created children who are \emph{younger siblings of the spine} is described by a point measure $\mathcal{Z}'$ with law $\nu_{Y_s}$ satisfying 
        \[
        \nu_x(f):=\nu_x(\langle f, \mathcal{Z}'\rangle ):=\frac{1}{m[\varphi](x)}\mathcal{E}_x(\sum_{i=1}^N \varphi(x_i)\sum_{j<i} f(x_j)). 
        \]
        Putting this together we conclude that, conditional on the motion of the spine, 
        groups of ``younger siblings'' (viewed as point measures $\mu$) appear  according to an inhomogeneous Poisson process with intensity 
\[
\frac{\gamma(Y_s)m[\varphi](Y_s)}{\varphi(Y_s)} ds \times \nu_{Y_s}(d\mu).
\]
In particular, conditionally on $(Y_s)_{s\ge 0}$ 
\[\hat{M}(\Upsilon_s)-\int_0^s \frac{\gamma(Y_u)m[\varphi](Y_u)}{\varphi(Y_u)}\nu_{Y_u}(\varphi) du=\hat{M}(\Upsilon_s)-\frac{1}{2}\int_0^s \frac{\gamma(Y_u)}{\varphi(Y_u)}\mathcal{V}[\varphi](Y_u) du
\]
is a martingale with variance 
\[\int_0^s \frac{\gamma(Y_u)m[\varphi](Y_u)}{\varphi(Y_u)}\nu_{Y_u}(\varphi^2) du. \]
By Doob's $L^2$ inequality we therefore see that 
\[
\tilde{\mathbb{P}}_{\delta_x}^\varphi\left(  \sup_{[s_1,s_2]} \left|\frac{\hat{M}(\Upsilon_s)}{s}-\frac{\frac{1}{2}\int_0^s \frac{\gamma(Y_u)}{\varphi(Y_u)}\mathcal{V}[\varphi](Y_u) du}{s}\right|>\eta \, \Bigg| \,  (Y_r)_{r\ge 0} \right)\le \frac{\int_0^{s_2} \frac{\gamma(Y_u)m[\varphi](Y_u)}{\varphi(Y_u)}\nu_{Y_u}(\varphi^2) du}{s_1^2\eta^2}
\]
and taking expectations over the spine motion, using Lemma \ref{lem:ergodic}, we obtain that 
\[
\tilde{\mathbb{P}}_{\delta_x}^\varphi\left(  \sup_{[s_1,s_2]} \left|\frac{\hat{M}(\Upsilon_s)}{s}-\frac{\frac{1}{2}\int_0^s \frac{\gamma(Y_u)}{\varphi(Y_u)}\mathcal{V}[\varphi](Y_u) du}{s}\right|>\eta \right)\le C\frac{s_2}{s_1^2\eta^2}
\]
for some universal constant $C<\infty$. Finally, applying Borel--Cantelli with $s_1=2^N$ and $s_2=2^{N+1}$ for $N\in \mathbb{N}$ we can deduce that 
\begin{equation}\label{eq:spinelln}
\left|\frac{\hat{M}(\Upsilon_s)-\frac{1}{2}\int_0^s \frac{\gamma(Y_u)}{\varphi(Y_u)}\mathcal{V}[\varphi](Y_u) du}{s}\right|\to 0
 \quad  \tilde{\mathbb{P}}^\varphi_{\delta_x}\text{-a.s.} \text{ as } s\to \infty.
 \end{equation}
On the other hand, due to ergodicity of the spine (in particular, Lemma \ref{lem:ergodic}) it follows that
\begin{equation}\label{eq:spinelln2}
 \frac1s\int_0^s \frac{\gamma(Y_u)}{\varphi(Y_u)}\frac12 \mathcal V[\varphi](Y_u)du \to \langle \tilde\varphi , \tfrac{\gamma}{2}\mathcal V[\varphi]\rangle \quad \tilde{\mathbf{P}}_{x}^\varphi\text{-a.s.}
\end{equation}
as $s\to \infty$. Combining with \eqref{eq:spinelln} we obtain \eqref{eqn::etabadequiv} and therefore \ref{pf:i}.}

To prove \ref{pf:ii}, we again appeal to the change of measure \eqref{E:COM} to obtain
\[
 \mathbb P_{\delta_x}^\varphi[\mathbf {1}_{\{|Z_n| > K\}}] = \frac{\mathbb P_{\delta_x}(|Z_n| > k ; N_n > 0)}{\mathbb P_{\delta_x}(N_n > 0)} = \mathbb P_{\delta_x}(|Z_n| > K | N_n > 0).
\]
Since $\varphi(x)/(n\mathbb{P}_{\delta_x}(N_n>0))$ is uniformly bounded above for $n \ge 1$, say, thanks to Theorem \ref{thm:survival}, it remains to show that for any $\delta>0$ there exists $K$ and $R'$ such that  
	\begin{equation}\label{eqn::unifexample} \sup_{n\geq R'}\,\mathbb{P}_{\delta_x}\left( \frac{\langle \varphi, X_n\rangle}{n}< \frac1K\giv|N_n|>0 \right) \leq \delta, \end{equation}
however, this follows from Theorem \ref{thm:Yaglom}. This completes the proof of \eqref{eqn::weighted_average_ntbad}.

We will now deduce from \eqref{eqn::weighted_average_ntbad} that 
\[ \sup_{n \ge R} \mathbb{P}_{\delta_x} \left( \left.\frac{N_n^{\eta_R}}{N_n}>\eps \right| N_n>0 \right) \to 0 \text{ as } R\to \infty.\]

For $r > 0$ define $D_r := \{y \in E : \varphi(y) < 1/r\}$, $\mathcal N_n^{D_r} := \{u \in \mathcal N_n : X_u(t) \in D_r\}$ and $N_n^{D_r} := |\mathcal N_n^{D_r}|$. Then, we have $N_n^{\eta_R}\leq N_n^{D_r}+ |\mathcal N_n^{\eta_R} \setminus (\mathcal N_n^{\eta_n}\cap \mathcal N_n^{D_r})|$. 
Using the same ideas as for the proof of Corollary \ref{cor:joint_Yaglom} (see also \cite[Corollary 1.7]{ellen}), for $\delta > 0$ there exists $r > 0$ such that
\[
  \sup_{n \ge R}\mathbb P_{\delta_x}\left( \frac{N_n^{D_r}}{N_n} > \frac{\varepsilon}{2}\giv N_n > 0\right) \le \delta/2,
\]
for $R$ sufficiently large. Then, note that 
	$$\frac{|\mathcal N_n^{\eta_R}\setminus (\mathcal N_n^{\eta_n}\cap \mathcal N_n^{D_r})|}{N_n}\leq \frac{\sum_{u\in \mathcal N_n} \vp(X_u(t))\I_{\{(u,t) \text{ is } \eta_R-\text{bad}\}}}{\sum_{u\in \mathcal N_n} \vp(X_u(t))} \|\vp\|_\infty r$$
so that, by (\ref{eqn::weighted_average_ntbad}),
	$$ \sup_{n\ge R}\mathbb{P}_{\delta_x}\bigg( \frac{|\mathcal N_n^{\eta_R}\setminus (\mathcal N_n^{\eta_n}\cap \mathcal N_n^{D_r})|}{N_n} > \eps/2 \bigg) < \delta/2$$
	for all $R$ sufficiently large. Putting the pieces together concludes the proof.
\end{proof}
\bigskip 

The next two lemmas build on Proposition \ref{propn::etabad} to provide the key connection between $(\hat M_t^d)_{t\ge 0}=(\hat{M}_t-\varphi(\zeta_t))_{t\ge 0}$ and the height function $(h_t)_{t\ge 0}$ for a critical branching diffusion under $\mathbb{P}_{\delta_x}(\cdot | N_n > 0)$. {The first lemma says that conditioning on survival until time $n$ (equivalently, on the supremum of the height function exceeding $n$) is equivalent to conditioning on the maximum of $\hat{M}$ (or $\hat{M}^d$) exceeding $\tfrac12\langle \tilde\varphi, \gamma \mathcal{V}[\varphi]\rangle n$, and is a simple consequence of what we have already proved.}

{\begin{lemma}\label{lem::equiv_cond} 
Assume Assumption \ref{A:main} holds. Then we have
	\begin{equation}
		\label{eqn::etafbad}
		\mathbb{P}_{\delta_x} \left( \sup_t \hat{M}^d_t > \frac{\langle \tilde\varphi,\gamma\mathcal{V}[\varphi] \rangle}{2} n \, \Big|\,  N_n>0\right) \to 1 \text{ and } \mathbb{P}_{\delta_x} \left( N_n>0 \, \Big| \, \sup_t \hat{M}^d_t > \frac{\langle \tilde\varphi,\gamma\mathcal{V}[\varphi] \rangle}{2} n \right)\to 1
	\end{equation}
	as $n\to \infty$, for any $x\in E$.
\end{lemma} 
\begin{proof} The first convergence follows from Proposition \ref{propn::etabad} and the fact that \[\mathbb{P}_{\delta_x}(N_n>0)/\mathbb{P}_{\delta_x}(N_{n(1+\eps)}>0)\to 1\] as $n\to \infty$ for any $\eps$. The second then follows from Theorem \ref{thm:survival} and Corollary \ref{cor:mgalefcltexcursion}, which imply that 
\[ \mathbb{P}_{\delta_x}\left(\sup_t \hat{M}^d_t > \frac{\langle \tilde\varphi,\gamma\mathcal{V}[\varphi] \rangle}{2} n\right) \sim \mathbb{P}_{\delta_x}(N_n>0) \text{ as } n\to \infty.\]

\end{proof}}

{For the second lemma, we let} $\mathbb P_{\delta_x}^1$ denote the law of a tree $T$ (see Section \ref{subsec:mmspace}) under $\mathbb{P}_{\delta_x}$ plus a random variable $t^1$ which, conditionally on $T$, is chosen uniformly from $[0,L(T)):=[0,L)$.

\begin{lemma} \label{lemma::goodcondtree}
Suppose Assumption \ref{A:main} holds. For any $\eta>0$ we have 
	\begin{equation*} 
		\lim_{R\to \infty}\lim_{n \to \infty} \mathbb{P}^1_{\delta_x}\big( (v_{t^1},h_{t^1}) \text{ is } \eta_R-\text{bad} \giv N_n>0 \big) = 0.
	\end{equation*}
\end{lemma}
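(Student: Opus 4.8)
The strategy is to reduce the statement about the uniformly chosen particle $v_{t^1}$ to the weighted statement already established in Proposition~\ref{propn::etabad}, via a size-biasing/many-to-one argument, together with the tail estimates on $L$ and $\tau_t$ proved in Lemmas~\ref{lem:length}, \ref{C:tailsIt} and~\ref{C:tailstaut}. Concretely, first I would unpack what $(v_{t^1}, h_{t^1})$ being $\eta_R$-bad means: under $\mathbb P_{\delta_x}^1$, conditionally on $T$ we pick $t^1$ uniformly on $[0,L)$, visit the particle $v_{t^1}$ at height $h_{t^1}$ in the branching process, and ask whether some strict ancestor $(v,s)$ with $R \le s \le h_{t^1}$ is $\eta$-bad. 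Since under $\mathbb P^1_{\delta_x}$ the point $t^1$ lands in the ``slice'' of a particle $u \in \mathcal N_h$ for $h$-Lebesgue-a.e. $h$ with probability proportional to $(d_u - b_u)$, and the total time spent at height $h$ is $N_h$, we can rewrite, for any bounded functional $G$,
\[
  \mathbb P^1_{\delta_x}\big( G(v_{t^1}, h_{t^1}) \big| T \big)
  = \frac{1}{L}\int_0^\infty \sum_{u \in \mathcal N_h} G(u, h)\,\mathrm dh
  = \frac{1}{L}\int_0^\infty \langle \mathbf 1, X_h\rangle \,\overline G(h)\,\mathrm dh,
\]
where $\overline G(h)$ is the average of $G(u,h)$ over $u \in \mathcal N_h$.

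The main step is then to compare this occupation-measure average with the $\varphi$-weighted average controlled by Proposition~\ref{propn::etabad}. The obstacle is that Proposition~\ref{propn::etabad} controls $N_n^{\eta_R}/N_n$ at a \emph{fixed large time} $n$ conditioned on survival to $n$, whereas here we are averaging over \emph{all} heights $h$ weighted by $N_h$, under the (different) conditioning $N_n>0$. I would handle this by first truncating: using Lemma~\ref{C:tailstaut} and Lemma~\ref{lem:length}, with probability tending to $1$ as $M \to \infty$ (uniformly in $n$) one has $L \le M n$ and $\tau_{t^1} \le M n$, and moreover the contribution to the occupation time from heights $h \le R$ or $h \ge M n$ is negligible relative to $L$ once we also use that, conditionally on $N_n>0$, $L$ is of order $n^2$ with high probability (from $\mathbb E_{\delta_x}[\int_0^n \langle 1, X_s\rangle\mathrm ds \mid N_n>0] \asymp n^2$ via Theorems~\ref{thm:survival} and~\ref{thm:moments}). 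This reduces the problem to showing that
\[
  \frac{1}{L}\int_R^{Mn} N_h \,\frac{N_h^{\eta_R}}{N_h}\,\mathrm dh
\]
is small in $\mathbb P_{\delta_x}(\cdot \mid N_n>0)$-probability. Here $N_h^{\eta_R}$ should be read as the number of $u \in \mathcal N_h$ for which some ancestor between times $R$ and $h$ is $\eta$-bad; note this event is increasing in $h$ along each lineage, which will let us compare across heights.

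For this last estimate I would integrate the bound from Proposition~\ref{propn::etabad} over $h$: for each fixed $h \in [R, Mn]$, $\mathbb P_{\delta_x}(N_h^{\eta_R}/N_h > \varepsilon \mid N_h > 0)$ is small uniformly in $h \ge R$ for $R$ large, and on the complementary event $N_h^{\eta_R} \le \varepsilon N_h$; combining with $\int_R^{Mn} N_h\,\mathrm dh \le L$ and Markov's inequality (taking expectations after conditioning, and using Theorem~\ref{thm:survival} plus Theorem~\ref{thm:moments} to control $\mathbb E_{\delta_x}[\int N_h\,\mathrm dh \mid N_n>0]$ versus $\mathbb E_{\delta_x}[L \mid N_n>0]$, both of order $n^2$) gives that the displayed quantity is at most $\varepsilon$ plus a vanishing error. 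Since $\varepsilon>0$ was arbitrary, sending first $n\to\infty$, then $R\to\infty$ (and, inside, $M\to\infty$ where needed for the truncations) yields the claim. The one technical wrinkle to be careful about is interchanging the roles of ``$\mathbb P_{\delta_x}(\cdot\mid N_h>0)$ for each $h$'' and the single conditioning ``$\mathbb P_{\delta_x}(\cdot\mid N_n>0)$'': I would handle this exactly as in the weighted case, by first proving the occupation-averaged statement \emph{unconditionally} with the extra $\varphi(x)/\mathbb P_{\delta_x}(N_n>0)$ Radon–Nikodym factor (bounded by $O(n)$ via Theorem~\ref{thm:survival}), and absorbing that factor using a uniform-integrability argument identical to step~\ref{pf:ii} of Proposition~\ref{propn::etabad}, with the role of $\langle \varphi, X_n\rangle/n$ played by $L/n^2$ (whose non-degeneracy near $0$ follows from a Yaglom-type lower bound on $L$). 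I expect this uniform-integrability/absorption step, and the bookkeeping of the two different conditionings, to be the main obstacle; the occupation-time comparison itself is routine given the moment asymptotics.
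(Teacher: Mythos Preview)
Your plan is essentially the paper's argument: rewrite the uniform-particle event as the occupation-time average $\tfrac{1}{L}\int N_h\,(N_h^{\eta_R}/N_h)\,dh$, truncate heights, and feed in Proposition~\ref{propn::etabad}. Two places where your implementation diverges from the paper and would need to be tightened. First, the paper does not run a uniform-integrability argument with ``$L/n^2$ playing the role of $\langle\varphi,X_n\rangle/n$''; there is no tilted measure with Radon--Nikodym factor $n^2/L$, so the analogy with step~(ii) of Proposition~\ref{propn::etabad} is not exact. Instead the paper decomposes into $\{L\le an^2\}$, $\{h_{t^1}\notin[cn,Cn]\}$ and $\{L>an^2,\,h_{t^1}\in[cn,Cn]\}$, replaces $1/L$ by $1/(an^2)$ on the last event, and handles the conditioning switch via the elementary inequality $\mathbb P(\,\cdot\,;N_n>0)\le \mathbb P(\,\cdot\,;N_h>0)$ for $h\le n$ together with $\mathbb P(N_h>0)/\mathbb P(N_n>0)\le F(R)/c$ on $[cn,Cn]$, followed by Cauchy--Schwarz (not Markov) against Theorem~\ref{thm:moments}. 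Second, the anti-concentration $\mathbb P(L\le an^2\mid N_n>0)\to 0$ does \emph{not} follow from $\mathbb E[\int_0^n N_s\,ds\mid N_n>0]\asymp n^2$ alone; the paper derives it from Corollary~\ref{cor:mgalefcltexcursion} (the excursion FCLT gives $L/n^2\Rightarrow\tau$), which is exactly the ``Yaglom-type lower bound on $L$'' you allude to later. Also note that Lemmas~\ref{C:tailsIt} and~\ref{C:tailstaut} concern the i.i.d.\ forest and are not used here; and your stated bound ``$L\le Mn$'' should read $L\le Mn^2$.
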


\begin{rem}
In the case that $h_{t^1}< R$ we also say that $(v_{t^1},h_{t^1})$ is $\eta_R$-bad (with an abuse of notation; recall we only defined the notion of $\eta_R$-badness for $(u,n)$ with $n\geq R$). However, since the probability of this event goes to $0$ as $n\to \infty$ for any fixed $R$, this will not play a role; we only introduce the convention for notational convenience. 
\end{rem}

\begin{proof}
{This essentially follows from Proposition \ref{propn::etabad}, but we first need to do some work to rule out bad situations where we could not appeal to it.}   More precisely, we decompose the probability space into the three disjoint events $\{L\le an^2\}$, $\{L>an^2\}\cap \{h_{t^1}\notin [cn,Cn]\}$ and $\{L>an^2\}\cap \{h_{t^1}\in [cn,Cn]\}$. Then we can write, for any choice of $a,c,C>0$, 
	\begin{align}\label{eqn::uparticleRbad}	\mathbb{P}^1_{\delta_x}\big( & (v_{t^1},h_{t^1}) \text{ is } \eta_R-\text{bad} \giv N_n>0 \big) \notag \\
    &\leq \mathbb{P}^1_{\delta_x}\big( L\leq a n^2 \giv N_n>0\big) + \mathbb{P}^1_{\delta_x}\big( \{h_{t^1} \notin [cn,Cn]\}\cap\{L>an^2\}\giv N_n>0\big) \nonumber \\
		& \quad + \mathbb{P}^1_{\delta_x}\big( \{(v_{t^1},H_{t^1}) \text{ is } \eta_R-\text{bad}\}\cap \{L>an^2\} \cap \{h_{t^1}\in[cn,Cn]\}\giv N_n >0\big).
	\end{align} 
    We will deal with each term separately. {The first two terms correspond to the bad situations mentioned above, and we will show that their probabilities can be made arbitrarily small, uniformly in $n$, by taking first $a\to 0$ and then $c\to 0, C\to \infty$}.

   {To see that $\mathbb{P}_{\delta_x}(L\le an^2 | N_n>0)\to 0$ as $a\to 0$, uniformly in $n$, we first note  the probability is zero for $n < 1/a$ (which is useful as it effectively allows us to restrict to large $n$). Now fix $\eps>0$. If $\tau$ denotes the length of a Brownian excursion {\bf e} with variance $\sigma^2(f)$ and conditioned to reach height $\tfrac{1}{2}\langle \tilde\varphi,\gamma\mathcal{V}[\varphi]\rangle n$, then we can choose $\delta > 0$ such that $\mathbb P(\tau \le \delta) < \varepsilon / 2$. Moreover, thanks to  Corollary \ref{cor:mgalefcltexcursion} and Lemma \ref{lem::equiv_cond}, there exists $N\ge 1$ such that for all $n \ge N$ and $x \in E$, 
    \[
    \mathbb P_{\delta_x}(L/n^2 \le \delta | N_n > 0) - \mathbb P(\tau \le \delta) \le \varepsilon / 2 .
    \]
 Hence for  $a\le  \min\{\delta, 1/N\}$, we have $
     \mathbb P_{\delta_x}(L \le an^2 | N_n > 0) \le \varepsilon$ for all $n$.}

    {We next show that for fixed $a>0$, the second term on the right of \eqref{eqn::uparticleRbad} converges to $0$ as $c\to 0,C\to \infty$, uniformly in $n$. For this, we first notice that $\mathbb{P}_{\delta_x}^1(h_{t^1}>Cn \, | \, N_n>0) \le \mathbb{P}_{\delta_x}(N_{Cn}>0 \, |\, N_n>0) \to 0$ as $C\to \infty$, uniformly in $n$, thanks to Theorem \ref{thm:survival}.} {So we may focus soley on $\mathbb{P}^1_{\delta_x}(\{h_{t^1}\leq cn\}\cap\{L>an^2\} \giv N_n>0)$}. By definition of the conditional probability we have 
	\begin{equation}\label{eq:term2bound1}
		\mathbb{P}^1_{\delta_x}\big(\{h_{t^1}\leq cn\}\cap\{L>an^2\} \giv N_n>0 \big) \le (\mathbb P_{\delta_x}(N_n>0))^{-1} \, \mathbb P_{\delta_x}(\{L\geq an^2\}\cap\{h_{t^1}\leq cn\}),
	\end{equation} 
    which, by conditioning on $\mathcal{F}_\infty$, is equal to 
	\begin{equation*} 
		\mathbb E_{\delta_x}(\I_{\{L\geq an^2\}} \mathbb{P}^1_{\delta_x}(h_{t^1}\leq cn \giv \mathcal{F}_\infty)). 
	\end{equation*}
	Using the lower bound on $L$ from the indicator function, the lower bound on $\mathbb{P}_{\delta_x}(N_n>0)$ from Theorem \ref{thm:survival}, and the fact that $\sup_{x\in E} \sup_{u>0} \mathbb E_{\delta_x}[N_u]<\infty$ (thanks to \ref{a:PF}), it follows that the right-hand side of \eqref{eq:term2bound1} is bounded above by
	\begin{equation*} (\mathbb P_{\delta_x}(N_n>0))^{-1} \,  \mathbb E_{\delta_x}\bigg[\I_{\{L\geq an^2\}} \frac{\int_0^{cn} N_s \, ds}{L}\bigg]  \leq \frac{cK}{a\vp(x)}
	\end{equation*}
	for $K < \infty$ and $n\geq 1$. Hence, for any $a>0$, the second term on the right-hand side of \eqref{eqn::uparticleRbad} converges to $0$ uniformly in $n$ as $c\to 0,C\to \infty.$

    Given the above, we are left to prove that for any fixed $a,c,C>0$,
	\begin{equation}\label{eqn::u_vertex_bad_finaleq}
		\lim_{R\to \infty}\lim_{n\to \infty}\mathbb{P}^1_{\delta_x}\big( \{(v_{t^1},h_{t^1}) \text{ is } \eta_R-\text{bad}\}\cap \{L>an^2\} \cap \{h_{t^1}\in[cn,Cn]\}\giv N_n >0\big)=0.
	\end{equation}

    From now on we assume that $cn\geq R$ (which is without loss of generality, since we are letting $n\to \infty$ before $R$). By conditioning on $\mathcal{F}_\infty$ again, we see that 
	\begin{align}\label{eqn::nrbadeq} 
     \mathbb{P}^1_{\delta_x}&\big( \{(v_{t^1},h_{t^1}) \text{ is } \eta_R-\text{bad}\}\cap \{L>an^2\} \cap \{h_{t^1}\in[cn,Cn]\}\giv N_n>0\big) \nonumber \\ & = \mathbb E_{\delta_x}\left[\I_{\{L> an^2\}}\int_{cn}^{Cn} \frac{N_u^{\eta_R}}{N_u}\frac{N_u}{L}\, du \, \giv N_n>0\right] \nonumber \\
		& \leq \delta + (an^2)^{-1}\mathbb E_{\delta_x}\left[\int_{cn}^{Cn} \I_{\{\frac{N_u^{\eta_R}}{N_u}>\delta\}}N_u \, du \giv N_n>0\right]
	\end{align}
	for any $\delta>0$. Therefore, we just need to prove that the second term in the last expression of \eqref{eqn::nrbadeq}, for fixed $\delta$, converges to 0 as $n\to \infty$ and then $R\to \infty$. However, we can write this term (by Fubini) as 
	\begin{equation}\label{eq:term3bound1}
		(an^2)^{-1} \int_{cn}^{Cn} \mathbb E_{\delta_x}\left[N_u\I_{\{\frac{N_u^{\eta_R}}{N_u}>\delta\}} \giv N_n>0\right] \, du 
    \end{equation}
Now note that for any $n$ such that $cn\wedge n\geq R$ 
	\begin{align*} \mathbb{P}_{\delta_x} \bigg(\frac{N_{cn}^{\eta_R}}{N_{cn}}>\eps \,; \, N_{cn}>0 \giv N_n>0 \bigg) & =  \frac{\mathbb{P}_{\delta_x} \bigg( \frac{N_{cn}^{\eta_R}}{N_{cn}}>\eps\, ; \, N_{cn}>0\, ;  \, N_n >0 \bigg)}{\mathbb{P}_{\delta_x}(N_{n}>0)} \\
		& \leq  \mathbb{P}_{\delta_x} \bigg(\frac{N_{cn}^{\eta_R}}{N_{cn}}>\eps\, ; \, N_{n}>0 \giv N_{cn}>0 \bigg) \frac{\mathbb P_{\delta_x}(N_{cn} >0)}{\mathbb P_{\delta_x}(N_n>0)} \\
		& \leq \sup_{s\geq R} \mathbb{P}_{\delta_x} \bigg( \frac{N_s^{\eta_R}}{N_s}>\eps \giv N_s>0 \bigg)\frac{\mathbb P_{\delta_x}(N_{cn}>0)}{\mathbb P_{\delta_x}(N_n>0)} \\
		& \leq \frac{F(R)}{c} \, \sup_{s\geq R} \mathbb{P}_{\delta_x} \bigg( \frac{N_s^{\eta_R}}{N_s}>\eps \giv N_s>0 \bigg),
	\end{align*}
    where
	$$F(R)=\frac{\sup_{s\geq R} s\mathbb P_{\delta_x}(N_s>0) }{\inf_{s\ge R} s\mathbb P_{\delta_x}(N_s>0)}\longrightarrow 1,$$ 
    as $R\to \infty$, by Theorem \ref{thm:survival}.
Thanks to this, Cauchy-Schwarz and the fact that $cn \ge R$, \eqref{eq:term3bound1} is bounded above by 
\begin{equation*}
\frac{\sqrt{F(R)}}{\sqrt{c}}\sup_{s\geq R} \mathbb P_{\delta_x}\bigg(\frac{N_s^{\eta_R}}{N_s}>\delta \giv N_s>0\bigg)^{1/2} \times (an^2)^{-1}\int_{cn}^{Cn} \mathbb P_{\delta_x}(N_u^2|N_n>0)^{1/2} \, du.
\end{equation*}  
Finally, we observe that by Theorems \ref{thm:moments} and \ref{thm:survival}, we have $\mathbb E_{\delta_x}[N_u^2|N_n>0]^{1/2}\leq Mu$ for some constant $M=M(c,C')$ and for any $u\in [cn,Cn]$, $cn\geq R\geq 1$. Hence, by integrating and applying Proposition \ref{propn::etabad}, we obtain \eqref{eqn::u_vertex_bad_finaleq}.
\end{proof}

{Lemma \ref{lemma::goodcondtree} means that for some $p(R)\to 1$ as $R\to \infty$,  if we pick a particle uniformly at random from a critical MBP conditioned to reach height at least $n$, then all along its ancestry after an initial height $R$, $\hat{M}$ will behave like a constant multiple of height (i.e.~time/generation in the original MBP). As we will explain in the next step, this means that if we pick two (or $k\ge 2$) particles at random, since their genealogical/tree distance will be a function of their two ancestries after time $R$ with high probability as $n\to \infty$, we can approximate their genealogical/tree distance by a `distance' measured using $\hat{M}$ (see \eqref{eqn::matrices}). }

\subsubsection*{Step 2.}\label{subsec:pairwise}
Let $\mathbb{P}_{\delta_x}^k$ be the law of a tree $T$ under $\mathbb{P}_{\delta_x}$, together with $k$ random variables $(t^1,\cdots, t^k)$ chosen conditionally independently and uniformly at random from $[0,L(T))=[0,L)$. We also define the $k\times k$ matrices 
\begin{align}\label{eqn::matrices} 
    (D_t^M)_{ij} & := t^{-1}\big(\hat M^d(v_{t^i})+\hat M^d(v_{t^j})-2\hat M^d(v^{ij}) \big),\\ \nonumber
	(D_t^h)_{ij} & := t^{-1} \big(h_{t^i}+h_{t^j}-2h^{ij}\big),
\end{align}
where $v^{ij}=v_{t^i}\wedge v_{t^j}$ is the most recent common ancestor of $v_{t^i}$ and $v_{t^j}$, and $h^{ij}=d_{v^{ij}}$ is its ``death time". The next proposition says that conditioned on survival up to a large time $t$, these matrices are essentially the same up to a constant.

\begin{prop}\label{prop::distancematrices}
	Suppose Assumption \ref{A:main} holds. Let $k\geq 1$ and $D_t^M$ and $D_t^h$ be as defined above. Then for any $\eps>0$ 
	\begin{equation}\label{eqn::distsecondconv}
		\mathbb{P}_{\delta_x}^k\left(\left\|\tfrac12{\langle \gamma\mathcal V[\varphi], \tilde\varphi\rangle}D_n^h-D_n^M\right\|>\eps \giv N_n>0 \right) \to 0
	\end{equation}
	as $n\to \infty$, where the distance is the Euclidean distance between $k\times k$ matrices.
\end{prop}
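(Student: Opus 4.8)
The plan is to show that, with high probability on the event $\{N_n > 0\}$, the common ancestor of two uniformly chosen particles lies above height $R$, and then to exploit Lemma~\ref{lemma::goodcondtree} to replace $\hat M^d$ by a constant multiple of the height along each ancestry. First I would fix $k$ and note that it suffices (by a union bound over the $\binom{k}{2}$ pairs) to prove the statement for $k = 2$; the matrix norm is controlled by the maximal entrywise discrepancy. So the target reduces to showing that
\[
 \frac{1}{n}\Big|\tfrac{\langle \mathcal V[\varphi],\tilde\varphi\rangle}{2\varphi(x)}\big(h_{t^1}+h_{t^2}-2h^{12}\big) - \big(\hat M^d(v_{t^1})+\hat M^d(v_{t^2})-2\hat M^d(v^{12})\big)\Big|
\]
tends to $0$ in probability under $\mathbb P^2_{\delta_x}(\cdot\mid N_n>0)$.

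The main structural input is the observation that $\hat M^d(v_t) = \hat M(v_t) - \varphi(\zeta_t) = \hat M(v_{t}) $ evaluated along the ancestry in the sense of the function $u \mapsto \hat M(u) = \sum_{w\in E(u)}\varphi(X_w(b_w))$ introduced in Step~1, which is \emph{additive along lineages}: if $v^{12}\preceq v_{t^1}$ then $\hat M(v_{t^1}) - \hat M(v^{12})$ depends only on the portion of the ancestry of $v_{t^1}$ strictly below $v_{t^1}$ and strictly above (or at) $v^{12}$, together with the younger siblings branching off it. Consequently, if both $(v_{t^1}, h_{t^1})$ and $(v_{t^2}, h_{t^2})$ are $\eta_R$-good, then along each ancestral segment above height $R$ the increments of $\hat M$ are pinned, up to error $2\eta n$, to $\tfrac{\langle \mathcal V[\varphi],\tilde\varphi\rangle}{2\varphi(x)}$ times the corresponding height increment — and the segments from $v^{12}$ down to $v_{t^1}$ and $v_{t^2}$ are precisely what enter both $D^h_n$ and $D^M_n$. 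Here I would use Lemma~\ref{lemma::goodcondtree} (applied once to $t^1$ and once to $t^2$, conditionally independent given $T$) to say that, letting $n\to\infty$ and then $R\to\infty$, the probability that either chosen particle is $\eta_R$-bad vanishes. I also need that, with high probability, $h^{12}\ge R$ and $h_{t^1}, h_{t^2}$ are of order $n$: the former follows because, conditioned on $N_n>0$, the tree is tall (height of order $n$) and two independent uniform picks have their most recent common ancestor near the root of the spine only on an event of small probability — more carefully, one shows $\mathbb P^2_{\delta_x}(h^{12} < R \mid N_n > 0)$ is small uniformly in large $n$ using the Yaglom/$Q$-process description, since the genealogy near the root is a finite (order one) object while the picked particles live at height $\asymp n$; the latter follows from \eqref{eq:biglength} and the argument bounding the second term in \eqref{eqn::uparticleRbad}, i.e. the probability that a uniformly picked particle has height outside $[cn,Cn]$ is small as $c\to 0$, $C\to\infty$.

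Assembling: on the good event (both particles $\eta_R$-good, $h^{12}\ge R$, heights in $[cn,Cn]$), each of the three differences $\hat M^d(v_{t^1})-\hat M^d(v^{12})$, $\hat M^d(v_{t^2})-\hat M^d(v^{12})$, and the contribution of the stretch below $\min(h_{t^1},h_{t^2})$ is within $O(\eta n)$ of $\tfrac{\langle \mathcal V[\varphi],\tilde\varphi\rangle}{2\varphi(x)}$ times the matching height difference; summing gives the entrywise bound $\tfrac{1}{n}|\cdots| \le C\eta$ on that event. Then one sends $n\to\infty$, then $R\to\infty$, then $\eta\to 0$, and the complementary event has probability $\to 0$ by the estimates above. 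The \textbf{main obstacle} I anticipate is the bookkeeping in the "good event" step: carefully matching the younger-sibling sums $E(u)$ against the height increments along the two lineages and the shared segment, and in particular making sure that the \emph{below-$v^{12}$} part of $\hat M$ cancels exactly in $\hat M^d(v_{t^1})+\hat M^d(v_{t^2})-2\hat M^d(v^{12})$ (it does, by additivity, which is why the common-ancestor normalisation survives), while the error terms from the segment below height $R$ are $O(R)=o(n)$ and hence negligible. A secondary technical point is justifying that $\mathbb P^2_{\delta_x}(h^{12}<R\mid N_n>0)$ is uniformly small in $n$; I would handle this by the spine/$Q$-process picture of Proposition~\ref{prop:Q-process}, noting the two independent uniform picks are, with probability close to one, descendants of two distinct children of some spine vertex at height $\le R$ only if the tree restricted to $[0,R]$ already carries a positive fraction of the total mass, which is ruled out since the mass is $\asymp n$-spread by the Yaglom limit (Theorem~\ref{thm:Yaglom}) together with Theorem~\ref{thm:moments}.
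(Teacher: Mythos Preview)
Your approach is essentially the same as the paper's: reduce to $k=2$, invoke Lemma~\ref{lemma::goodcondtree} for each of the two uniformly chosen particles, and on the good event compare $\hat M^d$ to a constant multiple of the height at each of $v_{t^1}$, $v_{t^2}$, $v^{12}$. The one substantive divergence is your treatment of the most recent common ancestor.

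You want to show that $\mathbb{P}^2_{\delta_x}(h^{12}<R\mid N_n>0)\to 0$ and then work only on $\{h^{12}\ge R\}$. The paper does \emph{not} prove (or need) this estimate; instead it keeps both cases and handles $\{h^{12}\le R\}$ directly. The trick is to add one more event to the ``good'' set, namely $\{\int_0^R N_s\,ds<K\}$ (whose probability is controlled via Proposition~\ref{prop:Q-process} and Theorem~\ref{thm:moments}). On the intersection of the good event with $\{h^{12}\le R\}$, one then has the crude deterministic bound $\hat M^d(v^{12})\le \|\varphi\|_\infty K$, since every vertex in $E(v^{12})$ is born before time $R$; together with $h^{12}\le R$ this gives $|\hat M^d(v^{12})-\text{const}\cdot h^{12}|\le K+R\cdot\text{const}=o(n)$, and the remaining two terms are handled by $\eta_R$-goodness and the height upper bound as you describe.

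Your proposed route is not wrong, but the justification you give for $\mathbb{P}^2_{\delta_x}(h^{12}<R\mid N_n>0)\to 0$ is incomplete. The sketch (``two uniform picks land in different early subtrees only if the early tree carries positive mass fraction'') is not quite the right mechanism: even if the tree below height $R$ has negligible mass, $h^{12}<R$ can occur whenever $t^1$ and $t^2$ fall in distinct subtrees rooted at height $R$, and more than one such subtree can be large. A correct argument would show that, conditioned on $N_n>0$, with high probability exactly one subtree rooted at height $R$ carries all but $o(n^2)$ of the total length (using Theorem~\ref{thm:survival} to bound the probability that two subtrees both survive to height of order $n$). This is doable but is extra work; the paper's case split with the occupation bound is the shorter path. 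Also note that the paper uses the height \emph{upper} bound via $\{N_{Mn}=0\}$ (so that the error $\eta\cdot(\text{height})$ is at most $\eta Mn$) and does not need your lower bound $h_{t^i}\ge cn$.
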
 

\begin{proof} 
	We prove this in the case $k=2$ with the general result following by a union bound. Note that by symmetry, and since $(D_n^h)_{ii}=(D_n^M)_{ii}=0$ for $i=1,2$, we need only control the distance $\left|\tfrac12{\langle \gamma\mathcal V[\varphi], \tilde\varphi\rangle}(D_n^h)_{12}-(D_n^M)_{12}\right|$. Given $\delta>0$, we:
	\begin{itemize}
		\item choose $M\geq 1$ such that $\lim_{n \to \infty} \mathbb P_{\delta_x}(N_{Mn}>0\giv N_n>0)\leq \delta/4$, which is possible by Theorem \ref{thm:survival};
		\item set $\eta:=\eps/(4M)$ and pick $R$ large enough that $\lim_{n \to \infty}\mathbb{P}_{\delta_x}^2( \{(v_{t^1},h_{t^1}) \text{ is } \eta_R-\text{bad}\}\cup \{(v_{t^2},h_{t^2}) \text{ is } \eta_R-\text{bad}\} \giv N_n>0)\leq \delta/4$, which is possible by a union bound and Lemma \ref{lemma::goodcondtree};
		\item pick $K$ large enough that $\lim_{n \to \infty} \mathbb P_{\delta_x}\big(\int_{0}^R N_s ds\geq K \giv N_n>0\big) \leq \delta/4$, which is possible by Proposition \ref{prop:Q-process}, Markov's inequality and Theorem \ref{thm:moments}.
	\end{itemize} 
	
	Putting this together, we see that for all $t$ large enough, setting
	\begin{equation*} 
    A_n:= \{N_{Mn}>0\} \cup \{\int\nolimits_{[0,R]} N_s ds\geq K\} \cup\{(v_{t^1},h_{t^1}) \text{ is } \eta_R-\text{bad}\}\cup \{(v_{t^2},h_{t^2}) \text{ is } \eta_R-\text{bad}\} \end{equation*} then $	\mathbb{P}_{\delta_x}^2(A_n\giv N_n>0)  \leq \delta $. This implies that $\mathbb{P}_{\delta_x}^2(A_n\giv N_n>0) \to 0$ as $n\to \infty$.
	
	Now consider the complementary event $A_n^c$. From the relevant definitions, we know that on this event we have
	\begin{equation}\label{eqn::bounds_1_matrices}
		h_{t^1}\leq Mn\, , \;\;
		h_{t^2}\leq Mn \, , \;\; 
		\left|\frac{{\hat M^d(v_{t^1})}}{h_{t^1}}-\frac{\langle \gamma\mathcal V[\varphi], \tilde\varphi\rangle}{2}\right|\leq \eta\, , \; \text{ and }\; \left|\frac{{\hat M^d(v_{t^2})}}{h_{t^2}}-\frac{\langle \gamma\mathcal V[\varphi], \tilde\varphi\rangle}{2}\right|\leq \eta,
	\end{equation}
	where $\eta=\eps/(4M)$. Then we consider the two possibilities $B:=\{h^{ij}>R\}$ or $B^c:=\{h^{ij}\leq R\}$. We will show that for large enough $n$, $B\cap A_n^c=B^c\cap A_n^c=\emptyset$, thus completing the proof.
	
	To do this, observe that on $B\cap A_n^c$, by definition of $A_n$ and ``$\eta_R$-badness", we have that
	\begin{equation*}
		h^{12}\leq Mn \; \text{ and } \left|\frac{{\hat M^d(v^{12})}}{h^{12}}-\frac{\langle \gamma\mathcal V[\varphi], \tilde\varphi\rangle}{2}\right|\leq \eta.
	\end{equation*}
	Putting this together with (\ref{eqn::bounds_1_matrices}) gives the deterministic bound
	$$\left|(D_n^M)_{12}-\frac{\langle \gamma\mathcal  V[\varphi], \tilde\varphi\rangle}{2}(D_n^h)_{12}\right| \leq \frac{1}{n} \times 4Mn \times \eta \leq \eps \quad \text{on} \quad B\cap A_n^c$$ 
    and so we have $B\cap A_n^c = \emptyset$.  Furthermore, on the event $B^c\cap A_n^c$, we have $h^{ij}\leq R$ and ${\hat M^d(v^{ij})}\leq \|\vp\|_\infty K$ (by a very crude bound). This means that 
	$$ \left|{\hat M^d(v^{12})}-\frac{\langle \gamma\mathcal V[\varphi], \tilde\varphi\rangle}{2}h^{12}\right| \leq K + Rb^{-1} $$ 
    and so 
	$$ \left|(D_n^M)_{12}-\frac{\langle \gamma\mathcal V[\varphi], \tilde\varphi\rangle}{2}(D_n^h)_{12}\right| \leq \frac{\eps}{2}+ \frac{K + Rb^{-1}}{n}. $$  
    Since the second term on the right hand side above is less than $\eps/2$ for all $n$ large enough, we see that $B^c\cap A_n^c=\emptyset$ for all such $n$. 
\end{proof}

{
With this, we can prove the statement of Theorem \ref{thm:main} for the Gromov-weak topology (we will upgrade it to Gromov-Hausdorff weak in the next subsection).  
Recall that we consider 
 \[
  \mathcal T_{n, x} := (T, \frac1n d, \frac{1}{n^2}\nu, {\bf r}), \quad n \ge 0, x \in E,
\]
under law $\mathbb P_{\delta_x}( \cdot | N_n > 0)$, where $(T,d,\bf r)$ is the pointed metric space $([0, L)/\!\!\sim , d)$ defined from the genealogical tree $T$ of the MBP with its root $\bf r$, and $\nu$ is the push forward of Lebesgue measure on $[0, L)$ under the equivalence relation $\sim$. 

In other words, the law of the distances in $\mathcal{T}_{n,x}$ between $k$ points sampled from $(\nu/n^2)^{\otimes k}$ - normalised to be a probability measure - is exactly the law of 
$(D_n^h)$ from \eqref{eqn::matrices}. Moreover, the total mass of $|\nu/n^2|$ is by definition $L/n^2$ and under $\mathbb{P}_{\delta_x}(\cdot \mid N_n>0)$, $L/n^2$ converges to the length $\tau$ of a Brownian excursion run at speed $\sigma^2(f)$ and conditioned to reach height $1$, by Corollary \ref{cor:mgalefcltexcursion} {and Lemma \ref{lem::equiv_cond}.}

Thus, by the definition of Gromov-weak convergence, using Proposition \ref{prop::distancematrices} (which says that $D_n^h$ and $D_n^M$ are close) and Corollary \ref{cor:mgalefcltexcursion} (which says that $D_n^M$ and the distances between $k$ points in $T_{\bf e}$ chosen from $\mu_{\ee}^{\otimes k}$ are close, in law) we immediately obtain the following. }

\begin{prop}\label{prop:main}
Under Assumption \ref{A:main}, we have
\begin{equation}\label{eq:main}
\mathcal T_{n, x} \overset{d.}{\to} \mathcal T_{\bf e}, \quad n \to \infty
\end{equation}
with respect to the Gromov-weak topology. As in Theorem \ref{thm:main}, $T_{\bf e}$ is the Brownian CRT generated from a Brownian excursion run at speed $\sigma^2(f)$, from \eqref{eq:sf}, and conditioned to reach height one. 
\end{prop}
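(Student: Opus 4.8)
The plan is to reduce the statement to the polynomial characterisation of the Gromov-weak topology from \eqref{eq:monomial}. Since the monomials $\Phi$ there are convergence-determining (see e.g.\ \cite{felixmmspace, mm-space}), it suffices to fix $k\ge 1$ and a bounded continuous $\psi\colon[0,\infty)^{\binom{k+1}{2}}\to\R$ and to prove that $\mathbb E_{\delta_x}\big[\Phi(\mathcal T_{n,x})\mid N_n>0\big]\to\mathbb E\big[\Phi(\mathcal T_{\ee})\big]$, with $\Phi$ the associated monomial. Unwinding the definition of $\Phi$ and using that the normalised measure underlying $\mathcal T_{n,x}$ corresponds to sampling $k$ conditionally i.i.d.\ uniform times $t^1,\dots,t^k\in[0,L)$, the left-hand side equals $\mathbb E^k_{\delta_x}\big[(L/n^2)^k\,\psi\big((D_n^h)_{ij\le k}\big)\mid N_n>0\big]$, where $\mathbb E^k_{\delta_x}$ denotes expectation under $\mathbb P^k_{\delta_x}$ (so also over the marks) and $(D_n^h)_{ij}=\tfrac1n d(t^i,t^j)$ is precisely the matrix of \eqref{eqn::matrices}; similarly $\mathbb E[\Phi(\mathcal T_{\ee})]=\mathbb E\big[\tau^k\psi\big((d_{\ee}(s^i,s^j))_{ij\le k}\big)\big]$ for $k$ i.i.d.\ uniform $s^i\in[0,\tau)$. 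So it is enough to establish: (i) the joint convergence in law, under $\mathbb P^k_{\delta_x}(\cdot\mid N_n>0)$, of $\big((D_n^h)_{ij\le k},\,L/n^2\big)$ to $\big((d_{\ee}(s^i,s^j))_{ij\le k},\,\tau\big)$; and (ii) uniform integrability of $(L/n^2)^k$ under $\mathbb P_{\delta_x}(\cdot\mid N_n>0)$, which follows from Theorems~\ref{thm:survival} and \ref{thm:moments}.

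For (i) the key input is Corollary~\ref{cor:mgalefcltexcursion}, which gives $\mathcal L\big(\hat M_{n^2\cdot}/n\mid N_n>0\big)\to\mathcal L\big(\ee\mid\sup(\ee)>1\big)$ in the Skorokhod topology, with $\ee$ a Brownian excursion of speed $\sigma^2(f)$; since $\|\varphi\|_\infty<\infty$ the same convergence holds with $\hat M^d$ (given by $\hat M^d_t=\hat M_t-\varphi(\zeta_t)$) in place of $\hat M$, jointly with $L/n^2\to\tau$. A short deterministic computation --- using that $\hat M(\cdot)$ is monotone along ancestral lines and only dips at branch points --- shows that $(D_n^M)_{ij}$ coincides, up to an error bounded by $n^{-1}\|\varphi\|_\infty$ times the offspring number at the most recent common ancestor of $v_{t^i}$ and $v_{t^j}$ (hence $O(n^{-1})$ in probability, using $\sup_x\mathcal E_x[N^2]<\infty$), with $n^{-1}\big(\hat M^d_{t^i}+\hat M^d_{t^j}-2\min_{s\in[t^i\wedge t^j,\,t^i\vee t^j]}\hat M^d_s\big)$. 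Thus $D_n^M$ is, up to a negligible term, a continuous functional of the rescaled path $\hat M^d_{n^2\cdot}/n$ and the rescaled marks, and the standard argument for reading off sampled distances of a continuum tree from its contour function --- carried out exactly as in \cite[Proposition~2.5.2]{LeGall02} or \cite[Proposition~6.13]{ellen} --- yields the joint convergence in law of $\big(D_n^M,\,L/n^2\big)$ to $\big((d_{\ee}(s^i,s^j))_{ij},\,\tau\big)$.

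It then remains to replace $D_n^M$ by $D_n^h$. Proposition~\ref{prop::distancematrices} gives $\big\|\tfrac{\langle\mathcal V[\varphi],\tilde\varphi\rangle}{2\varphi(x)}D_n^h-D_n^M\big\|\to 0$ in $\mathbb P^k_{\delta_x}(\cdot\mid N_n>0)$-probability, and this holds jointly with $L/n^2\to\tau$ (all of it being, in the end, a consequence of the single path convergence above). Since the limiting excursion is run at precisely the speed $\sigma^2(f)$ that makes the two normalisations compatible, we conclude that $\big((D_n^h)_{ij\le k},\,L/n^2\big)$ converges in law to $\big((d_{\ee}(s^i,s^j))_{ij\le k},\,\tau\big)$, which is (i); together with (ii) this gives $\mathbb E_{\delta_x}[\Phi(\mathcal T_{n,x})\mid N_n>0]\to\mathbb E[\Phi(\mathcal T_{\ee})]$ for every monomial $\Phi$, hence the claim.

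I expect the main obstacle to be the continuous-mapping step in the second paragraph. One has to check that the map sending a conditioned excursion path together with $k$ marked times to the associated $k\times k$ tree-distance matrix is almost surely continuous at the limiting law --- i.e.\ that, with probability one, no sample point lies at a strict local minimum of $\ee$ and that $\{\sup(\ee)>1\}$ is a continuity point --- and that the conditioning on $\{N_n>0\}$ passes, in the limit, to the conditioning $\{\sup(\ee)>1\}$ jointly with the uniform marks. These are precisely the (routine but genuinely technical) points already carried out in \cite{LeGall02, ellen}; everything else is bookkeeping.
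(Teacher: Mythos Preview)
Your approach is the same as the paper's --- both combine Proposition~\ref{prop::distancematrices} (relating $D_n^h$ to $D_n^M$) with Corollary~\ref{cor:mgalefcltexcursion} (relating $D_n^M$ to sampled CRT distances) to obtain the joint convergence in your step (i), and deduce Gromov-weak convergence from there. The extra details you supply (the link between $\hat M^d(v^{ij})$ and $\min_{s\in[t^i\wedge t^j,\,t^i\vee t^j]}\hat M^d_s$ up to an error governed by the offspring at the common ancestor, and the continuous-mapping considerations) are correct and useful.

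The gap is your claim (ii). Under $\mathbb P_{\delta_x}(\cdot\mid N_n>0)$ the rescaled length $L/n^2$ is \emph{not even integrable}, let alone uniformly integrable in any power $k\ge 1$: for $s\ge n$ one has $\{N_s>0\}\subset\{N_n>0\}$, whence $\mathbb E_{\delta_x}[N_s\mid N_n>0]=\psi_s[1](x)/\mathbb P_{\delta_x}(N_n>0)$, and since $\psi_s[1](x)\to\varphi(x)\langle 1,\tilde\varphi\rangle>0$ by \ref{a:PF} this gives $\mathbb E_{\delta_x}[L\mid N_n>0]=\int_0^\infty\mathbb E_{\delta_x}[N_s\mid N_n>0]\,ds=\infty$. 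Correspondingly $\mathbb E[\tau]=\infty$ for the limiting excursion length conditioned on $\sup\ee>1$ (an easy scaling computation under the It\^o measure), so $\mathbb E[\Phi(\mathcal T_\ee)]=\infty$ for every genuine polynomial with $k\ge 1$; the target equality $\mathbb E[\Phi(\mathcal T_{n,x})\mid N_n>0]\to\mathbb E[\Phi(\mathcal T_\ee)]$ is therefore meaningless here. Theorems~\ref{thm:survival} and~\ref{thm:moments} cannot rescue this --- the latter only yields moments up to order two under \ref{a:VAR}, and in any case controls $\int_0^t$ rather than $\int_0^\infty$.

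The remedy is simply to drop (ii): your step (i) already suffices. Convergence in distribution in the (Polish) Gromov-weak topology is characterised directly by joint convergence in law of $(|\nu_n|,D_n^k)$ for every $k$, with no integrability on the total mass required; see e.g.\ \cite{mm-space, felixmmspace}. That is precisely what the paper invokes.
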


\subsubsection*{Step 3.}\label{subsec:LMBprop}
For $r>0$ and $w\in N_r$,  let $B_\delta(w,r)$ be the collection of all vertices with distance in $(T,d)$ at most $\delta$ from the point in $(T,d)$ corresponding to the particle with label $w$ at height $r$. Equivalently, this point is the `time' in $T=[0,L)$ given by $\inf\{t: (v_t,h_t)=(w,r)\}$. %{\color{red}Note that, since $w$ has an associated death time $d_w < \infty$, this ball will also depend on the ``age'' of the particle.} 
Further, let $|B_\delta(w,r)|$ be the total time in the depth-first exploration spent in $B_\delta(w,r)$. 

\begin{lemma}\label{lem:GLMB}
	For any $\delta>0$,
	$$\lim_{\eta\to 0} \limsup_{n \to \infty} \mathbb{P}_{\delta_x}(\exists w\,:\, |B_{4\delta n}(w,r)|<\eta n^2 \,|\, N_n>0)=0.$$
\end{lemma}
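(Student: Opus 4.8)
We follow the classical route of covering $B_{4\delta n}(w,r)$ by a sub-tree issued from an ancestor of $(w,r)$, and then invoking the Gromov--weak convergence of Proposition~\ref{prop:main} (which is available at this point in the argument).

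\emph{Geometric reduction.} Fix the grid $\delta n\,\mathbb Z_{\ge0}$ of heights. Let $p=(w,r)$ be a point of $T$ with $r\ge 2\delta n$, let $s=k\delta n$ be a grid point in $[r-2\delta n,\,r-\delta n]$, let $v$ be the ancestor of $w$ alive at time $s$, and let $\bar p=(v,s)$ be the corresponding point of $T$. If $q$ lies above $\bar p$ in $T$ and has height $h(q)\le r$, then $\mathrm{mrca}(p,q)$ lies between heights $s$ and $r$, so $d(p,q)=r+h(q)-2h(\mathrm{mrca}(p,q))\le 2(r-s)\le 4\delta n$. Hence $B_{4\delta n}(w,r)$ contains the part of the sub-tree above $\bar p$ of height $\le r$, of $\nu$-mass $\int_0^{r-s}N^{(v,s)}_u\,du\ge\int_0^{\delta n}N^{(v,s)}_u\,du$, where $N^{(v,s)}$ is the population process of the sub-MBP descending from the particle $v$ after time $s$; moreover $N^{(v,s)}_{\delta n}>0$ because $w$ is a descendant of $v$ surviving to time $r\ge s+\delta n$. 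For points of height $r<2\delta n$, the same computation shows $B_{4\delta n}(w,r)$ contains every point of $T$ of height $\le 2\delta n$, of $\nu$-mass $\int_0^{2\delta n}N_u\,du$.

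\emph{Union bound.} Writing $q(y):=\mathbb P_{\delta_y}\big(N_{\delta n}>0,\ \int_0^{\delta n}N_u\,du<\eta n^2\big)$, the branching Markov property at the deterministic time $k\delta n$ yields $\mathbb E_{\delta_x}\big[\#\{v\in\mathcal N_{k\delta n}: N^{(v,k\delta n)}_{\delta n}>0,\ \int_0^{\delta n}N^{(v,k\delta n)}_u\,du<\eta n^2\}\big]=\psi_{k\delta n}[q](x)$, since given $\mathcal F_{k\delta n}$ the sub-MBPs $(N^{(v,k\delta n)})_{v\in\mathcal N_{k\delta n}}$ are independent copies of $X$ started from the positions $X_v(k\delta n)$. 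Combining this with the previous paragraph and $\mathbb P(\,\cdot\mid N_n>0)\le\mathbb P(\cdot)/\mathbb P(N_n>0)$,
\[
\mathbb P_{\delta_x}\big(\exists\,(w,r):\ |B_{4\delta n}(w,r)|<\eta n^2\ \big|\ N_n>0\big)\ \le\ \mathbb P_{\delta_x}\Big(\int_0^{2\delta n}N_u\,du<\eta n^2\ \Big|\ N_n>0\Big)\ +\ \sum_{k\ge0}\frac{\psi_{k\delta n}[q](x)}{\mathbb P_{\delta_x}(N_n>0)} .
\]
Fix $A>0$. By Theorem~\ref{thm:survival}, $\limsup_n\mathbb P_{\delta_x}(N_{An}>0\mid N_n>0)=1/A$, so only the $k\le A/\delta$ terms matter up to this error; for each of them $\psi_{k\delta n}[q](x)\le\|q\|_\infty\sup_{t\ge0}\mathbb E_{\delta_x}[N_t]$, and $\sup_t\mathbb E_{\delta_x}[N_t]<\infty$ by \ref{a:PF}, while $\mathbb P_{\delta_x}(N_n>0)\ge c\varphi(x)/n$ for large $n$ by Theorem~\ref{thm:survival}. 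Thus the finite sum is $\le C(A,\delta,x)\,n\|q\|_\infty$.

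\emph{Key estimate, and the obstacle.} Everything therefore reduces to: (i) $\mathbb P_{\delta_x}\big(\int_0^{2\delta n}N_u\,du<\eta n^2\mid N_n>0\big)\to0$ as $n\to\infty$ then $\eta\to0$; and (ii) $\limsup_n n\|q\|_\infty\to0$ as $\eta\to0$. For (i) observe that $\int_0^{2\delta n}N_u\,du=\nu(B_{2\delta n}({\bf r}))$ is the $\nu$-mass of the ball of radius $2\delta n$ about the root, so after rescaling it is the radius-$2\delta$ root-ball mass of $\mathcal T_{n,x}$; the law of the distance to the root under $\nu$ is a ($k=1$) Gromov--weak functional and the limiting excursion a.s.\ spends zero Lebesgue time at any fixed positive level, so Proposition~\ref{prop:main} and the continuous mapping theorem give $n^{-2}\int_0^{2\delta n}N_u\,du\overset{d.}{\to}\mu_{\bf e}(B_{2\delta}({\bf r}_{\bf e}))=|\{s:{\bf e}_s\le2\delta\}|$, which is a.s.\ strictly positive (an excursion issued from and returning to $0$ spends positive time below every positive level); letting $\eta\to0$ gives (i). For (ii), write $T=\delta n$ and use the uniform Kolmogorov bound $T\mathbb P_{\delta_y}(N_T>0)\le C\|\varphi\|_\infty$ to reduce to showing $\sup_{y\in E}\mathbb P_{\delta_y}\big(\int_0^T N_u\,du<\varepsilon T^2\mid N_T>0\big)\to0$ as $\varepsilon\to0$, uniformly for $T$ large; applying the computation of (i) to $X$ under $\mathbb P_{\delta_y}$ with horizon $T$ and level $1$ gives $T^{-2}\int_0^T N_u\,du\overset{d.}{\to}|\{s:{\bf e}_s\le1\}|>0$ a.s., so this conditional probability tends to $\mathbb P(|\{s:{\bf e}_s\le1\}|<\varepsilon)\to0$. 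Combining, $\lim_{\eta\to0}\limsup_n\mathbb P_{\delta_x}(\exists\,(w,r):|B_{4\delta n}(w,r)|<\eta n^2\mid N_n>0)\le 1/A$ for every $A$, hence is $0$. The one genuinely delicate point is the uniformity over the starting point $y$ in (ii): this needs the Kolmogorov, Yaglom and excursion limits underlying Proposition~\ref{prop:main} to hold uniformly in the initial state, which is standard in this framework (cf.\ \cite{BMPI,Yaglom2022,cdrg}); a more computational alternative via the conditional moments of Theorem~\ref{thm:moments} with $f=\mathbf 1$ is possible but less transparent, since $\int_0^T N_u\,du$ given survival is not concentrated.
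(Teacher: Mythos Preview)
Your proof is correct and follows essentially the same approach as the paper: both cover each ball $B_{4\delta n}(w,r)$ by a sub-tree rooted at an ancestor on the height-grid $\delta n\mathbb Z_{\ge 0}$ that survives for time at least $\delta n$, apply a union bound over grid levels, truncate to finitely many levels via $\mathbb P_{\delta_x}(N_{An}>0\mid N_n>0)\lesssim 1/A$, and conclude via the excursion limit (the paper invokes Corollary~\ref{cor:mgalefcltexcursion} directly where you go through Proposition~\ref{prop:main}, and it does not split off the case $r<2\delta n$). The uniformity-in-$y$ issue you flag at the end is exactly the point the paper resolves by the argument already given for \eqref{eq:biglength}.
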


The proof of this lemma follows ideas in \cite{felix}.
\begin{proof}
	First observe that $\sup_{n\ge 1}\mathbb{P}_{\delta_x}(N_{K\delta n}>0|N_n>0)\to 0$ as $K\to \infty$, and $1/\mathbb{P}(N_n>0)\lesssim n$ by Theorem \ref{thm:survival}, so it suffices to show that for fixed $K$, we have 
	$$\lim_{\eta\to 0} \limsup_{n \to \infty} n\mathbb{P}_{\delta_x}(\exists w\,:\, |B_{4\delta n}(w)|<\eta n^2;\, N_n>0;\,N_{K\delta n}=0 )=0.$$
	For this we divide time into increments of length $\delta n$, and for a vertex $w\in \mathcal N_r$ we write $v_{w,r}$ for the ancestor of $w$ at time $s_r-1$ where $s_r=\sup\{k\delta n: k\delta n < r\}$. Consider the collection of trees $T_v^{2\delta n}$ rooted at some vertex $v=v_{w,r}$ with $w\in \mathcal N_r$, and cut off at height $2\delta n$. Then every ball $B_{4\delta n}(w,r)$ is contained in some such tree that survives for time at least $\delta n$. Thus, we have 
	\begin{align*}
		& n\mathbb{P}_{\delta_x}(\exists w\,:\, |B_{4\delta n}(w,r)|<\eta n^2;\, N_n>0;\,N_{K\delta n}=0 ) 
		\\
		& \le n\mathbb{P}_{\delta_x}(\cup_{k=1}^K \{ \text{some } v\in \mathcal N_{k\delta n} \text{ has }|T_v^{2\delta n}|<\eta n^2 \text{ and has descendants at generation }\delta n\}) \\
	& \le \sum_{k=1}^K \mathbb{E}_{\delta_x}\bigg[\sum_{v\in \mathcal N_{\delta k n}} n \mathbb{P}_{X_v(\delta k n)}(|T_v^{2\delta n}|<\eta n^2 \cap N_{\delta n}>0)\bigg] \\
	& \le  \sup_y n \mathbb{P}_y(|T_v^{2\delta n}|<\eta n^2 \cap N_{\delta n}>0) \sum_{k=1}^K \mathbb{E}[N_{\delta k n}].
		\end{align*}
		The second term in the final expression about is bounded by a constant (depending on $K$), while the first converges to $0$ uniformly in $n$ by Corollary \ref{cor:mgalefcltexcursion}. So by taking $K\to \infty$ and then $n\to \infty$ we prove our claim.
\end{proof}

\begin{proof}[Proof of Theorem \ref{thm:main}]
Lemma \ref{lem:GLMB} verifies the global lower mass bound condition in Lemma \ref{lem:GW-to-GHW},  so we can upgrade the Gromov-weak convergence of Proposition \ref{prop:main} to Gromov-Hausdorff-weak convergence. 
\end{proof}

\bibliographystyle{plain}
\bibliography{bibbp.bib}

@article{discretemoments,
  title={Moment asymptotics for the neutron transport equation},
  author={Dumonteil, Eric and Horton, Emma and Kyprianou, Andreas E and Zoia, Andrea},
  journal={Nuclear Science and Engineering},
  volume={199},
  number={9},
  pages={1376--1390},
  year={2025},
  publisher={Taylor \& Francis}
}

@article{gap16,
  title={The gap between Gromov-vague and Gromov--Hausdorff-vague topology},
  author={Athreya, Siva and L{\"o}hr, Wolfgang and Winter, Anita},
  journal={Stochastic Processes and their Applications},
  volume={126},
  number={9},
  pages={2527--2553},
  year={2016},
  publisher={Elsevier}
}

@article{BMPI,
  title={Stability of (sub) critical non-local spatial branching processes with and without immigration},
  author={Horton, Emma and Kyprianou, Andreas E and Mart{\'\i}n-Ch{\'a}vez, Pedro and Powell, Ellen and Rivero, Victor},
  journal={Annals of Probability},
  year={2026, to appear}
}

@article{felixmmspace,
  title={Vague convergence and method of moments for random metric measure spaces},
  author={Foutel-Rodier, F{\'e}lix},
  journal={arXiv preprint arXiv:2402.05097},
  year={2024}
}

@article{mm-space,
  title={Convergence in distribution of random metric measure spaces ($\Lambda$-coalescent measure trees)},
  author={Greven, Andreas and Pfaffelhuber, Peter and Winter, Anita},
  journal={Probability Theory and Related Fields},
  volume={145},
  number={1},
  pages={285--322},
  year={2009},
  publisher={Springer}
}

@article{ellen,
  title={An invariance principle for branching diffusions in bounded domains},
  author={Powell, Ellen},
  journal={Probability Theory and Related Fields},
  volume={173},
  number={3},
  pages={999--1062},
  year={2019},
  publisher={Springer}
}

@inproceedings{miermont08,
  title={Invariance principles for spatial multitype {G}alton-{W}atson trees},
  author={Miermont, Gr{\'e}gory},
  booktitle={Annales de l'IHP Probabilit{\'e}s et {S}tatistiques},
  volume={44},
  number={6},
  pages={1128--1161},
  year={2008}
}

@article{LeGall02,
  title={Random Trees, {L}{\'e}vy Processes, and Spatial Branching Processes},
  author={Le Gall, Jean-Fran{\c{c}}ois and Duquesne, T},
  journal={Ast{\'e}risque},
  volume={281},
  pages={30},
  year={2002}
}

@article{CRT3,
  title={The {Continuum Random Tree III}},
  author={Aldous, David},
  journal={The {A}nnals of {P}robability},
  pages={248--289},
  year={1993},
  publisher={JSTOR}
}

@article{CRT1,
  title={The {Continuum Random Tree I}},
  author={Aldous, David},
  journal={The {A}nnals of {P}robability},
  pages={1--28},
  year={1991},
  publisher={JSTOR}
}

@article{LeGall2005,
  author  = {Le Gall, Jean-François},
  title   = {Random trees and applications},
  journal = {Probability Surveys},
  volume  = {2},
  pages   = {245--311},
  year    = {2005},
  doi     = {10.1214/154957805100000140},
  url     = {https://doi.org/10.1214/154957805100000140}
}

@article{felix,
  title={A moment approach for the convergence of spatial branching processes to the Continuum Random Tree},
  author={Foutel-Rodier, F{\'e}lix},
  journal={arXiv preprint arXiv:2412.16035},
  year={2024}
}

@article{mmm-space,
  title={Marked metric measure spaces},
  author={Depperschmidt, Andrej and Greven, Andreas and Pfaffelhuber, Peter},
  year={2011}
}

@article{Yaglom2022,
  title={Yaglom limit for critical nonlocal branching {M}arkov processes},
  author={Harris, Simon C and Horton, Emma and Kyprianou, Andreas E and Wang, Minmin},
  journal={The {A}nnals of {P}robability},
  volume={50},
  number={6},
  pages={2373--2408},
  year={2022},
  publisher={Institute of Mathematical Statistics}
}

@article {cdrg,
	AUTHOR = {C\'erou, Fr\'ed\'eric and Delyon, Bernard and Guyader, Arnaud
	and Rousset, Mathias},
	TITLE = {A central limit theorem for {F}leming-{V}iot particle systems},
	JOURNAL = {Ann. Inst. Henri Poincar\'e{} Probab. Stat.},
	FJOURNAL = {Annales de l'Institut Henri Poincar\'e{} Probabilit\'es et
	Statistiques},
	VOLUME = {56},
	YEAR = {2020},
	NUMBER = {1},
	PAGES = {637--666},
	ISSN = {0246-0203,1778-7017},
	MRCLASS = {60K35 (60J25 65C05 82C22)},
	MRNUMBER = {4059003},
	DOI = {10.1214/19-AIHP976},
	URL = {https://doi.org/10.1214/19-AIHP976},
}

@article{moment_correction,
	author = {Gonzalez, I. and Horton, E. and Kyprianou, A. E.},
	date-added = {2023-11-02 02:29:48 +0000},
	date-modified = {2023-11-02 06:50:53 +0000},
	journal = {Probability Theory and Related Fields},
	number = {1},
	pages = {505--515},
	publisher = {Springer},
	title = {Correction to: Asymptotic moments of spatial branching processes},
	volume = {187},
	year = {2023}}

@book{bNTEbook,
	author = {Horton, E. and Kyprianou, A. E.},
	date-added = {2023-11-02 02:29:48 +0000},
	date-modified = {2023-11-02 06:49:08 +0000},
	publisher = {Birkh\"auser},
	series = {Probability and its Applications},
	title = {Stochastic neutron transport and non-local branching Markov processes},
	year = {2023}}

@article{bmoments,
	author = {Gonzalez, I. and Horton, E. and Kyprianou, A. E.},
	date-added = {2023-11-02 02:23:39 +0000},
	date-modified = {2023-11-02 06:49:18 +0000},
	journal = {Probability Theory and Related Fields},
	number = {3-4},
	pages = {805--858},
	publisher = {Springer},
	title = {Asymptotic moments of spatial branching processes},
	volume = {184},
	year = {2022}}

\section*{Acknowledgements}
EH acknowledges support of the EPSRC grant MaThRad EP/W026899/2. EP is supported by UKRI Future Leaders 
Fellowship MR/W008513.
\end{document}